\documentclass[oneside,a4paper,reqno]{amsart}
\usepackage{pdfsync, enumitem}
\usepackage{amsmath,bm}
\usepackage{mathtools}
\usepackage{graphicx}
\usepackage[all,cmtip]{xy}
\usepackage{tikz-cd}
\usepackage{mathrsfs}
\usepackage{hyperref}
\usepackage{tikz-cd}
\usepackage{amssymb}
\usepackage{stackrel}
\usepackage{adjustbox}
\usepackage{multicol}
\usepackage{amsmath, amsthm, amscd, amssymb, latexsym, eucal}
\usepackage[all]{xy}
\def\serieslogo@{} \def\@setcopyright{} \makeatother
\usepackage{fullpage}
\usepackage[margin=2.5cm, top=2.5cm, bottom=2.5cm, footskip=1cm]{geometry}

\usepackage[colorinlistoftodos]{todonotes}

\usepackage{hyperref}
\usepackage{color}
\usepackage{cite}
\usepackage{quiver}

\makeatletter
\renewcommand*\env@matrix[1][c]{\hskip -\arraycolsep
	\let\@ifnextchar\new@ifnextchar
	\array{*\c@MaxMatrixCols #1}}
\makeatother

\usepackage{color}

\numberwithin{equation}{section}
\newtheorem{thm}{Theorem}[section]
\newtheorem*{main-thm}{Theorem}
\newtheorem*{Auslander-thm}{Auslander's Theorem}

\newtheorem{cor}[thm]{Corollary}
\newtheorem{lem}[thm]{Lemma}
\newtheorem{prop}[thm]{Proposition}

\newtheorem*{mainthm}{Main~Theorem}

\theoremstyle{definition}
\newtheorem{defn}[thm]{Definition}
\newtheorem{rem}[thm]{Remark}
\newtheorem{exmp}[thm]{Example}

\newtheorem*{connot}{Notation and Conventions}

\newcommand\padova{%
\mathrel{\ooalign{\hss{\scalebox{0.5}{$\longleftrightarrow$}}\hss\cr%
\kern0.9ex\raise0.55ex\hbox{\scalebox{0.5}{$\boldsymbol{\bm{\vert}}$}}}}}
\newcommand\hfpadova{%
\mathrel{\ooalign{\hss{\scalebox{0.5}{$\longleftrightarrow$}}\hss\cr%
\kern0.9ex\raise0.55ex\hbox{\scalebox{0.5}{$\boldsymbol{\mathsf{h}\mathsf{f}\ \ }$}}}}}
\newcommand\rpadova{%
\mathrel{\ooalign{\hss{\scalebox{0.5}{$\longrightarrow$}}\hss\cr%
\kern0.9ex\raise0.55ex\hbox{\scalebox{0.5}{$\boldsymbol{\bm{\vert}}$}}}}}
\newcommand\lpadova{%
\mathrel{\ooalign{\hss{\scalebox{0.5}{$\longleftarrow$}}\hss\cr%
\kern0.9ex\raise0.55ex\hbox{\scalebox{0.5}{$\boldsymbol{\bm{\vert}}$}}}}}

\def\g{\gamma}
\def\d{\delta}
\def\e{\varepsilon}

\def\i{\iota}

\DeclareMathOperator*{\Ker}{\mathsf{Ker}}

\DeclareMathOperator{\pd}{\mathsf{pdim}}
\DeclareMathOperator{\fd}{\mathsf{fdim}}
\DeclareMathOperator*{\id}{\mathsf{idim}}

\DeclareMathOperator*{\Mod}{\mathsf{Mod}-\!}

\DeclareMathOperator*{\smod}{\mathsf{mod}-\!}

\DeclareMathOperator*{\proj}{\mathsf{proj}-\!}

\DeclareMathOperator*{\Inj}{\mathsf{Inj}-\!}

\DeclareMathOperator*{\Proj}{\mathsf{Proj}-\!}

\DeclareMathOperator*{\Tor}{\mathsf{Tor}}

\usepackage{stackengine}

\newsavebox{\proofbox}
\savebox{\proofbox}{\begin{picture}(7,7)%
	\put(0,0){\framebox(7,7){}}\end{picture}}

\newcommand{\comp}{\mathop{\raisebox{+.3ex}{\hbox{$\scriptstyle\circ$}}}}

\usepackage{latexsym}
\usepackage{pstricks}
\usepackage{comment}

\usepackage[greek,english]{babel}

\begin{document}

\title{Equivariant Cleft Extensions and Singular Equivalences}

\author[karakikes]{Miltiadis Karakikes}
\address{Department of Mathematics, National and Kapodistrian University of Athens Panepistimioupolis, 15784 Athens, Greece}
\email{miltoskar@math.uoa.gr}

\subjclass[2020]{16E30, 16E35, 18E10, 16G10, 18G20, 18G80}


\keywords{cleft extensions, trivial extensions, extensions of rings, equivariant category, singularity category}
\thanks{\textbf{Acknowledgements:} The author is grateful to P.~Kostas for valuable discussions on the subject of cleft extensions and for feedback on the manuscript. Thanks are also due to C.~Psaroudakis for his helpful comments on earlier drafts.
}

\begin{abstract}
We study the equivariant lifting of cleft extensions of abelian categories and its impact on singularity categories. Specifically, we establish the necessary framework for lifting a cleft extension to a G-equivariant cleft extension. Furthermore, we prove that a restriction functor associated to a cleft extension induces a singular equivalence if and only if its equivariant counterpart does. As a concrete application, we demonstrate that the skew group ring of a $G$-equivariant $\theta$-extension is isomorphic to a $\widehat{\theta}$-extension of the base skew group ring, allowing us to lift singular equivalences for these structures.
\end{abstract}

\date{\today}
\maketitle
\setcounter{tocdepth}{1}

\section{Introduction}

A \emph{cleft extension of a ring} $\Gamma$ consists of a ring $\Lambda$ and ring morphisms $i \colon \Lambda \to \Gamma$ and $e \colon \Gamma \to \Lambda$ such that $i \circ e = \mathsf{Id}_\Gamma$. Any cleft extension of rings is equivalent to a $\theta$-extension of rings (Definition~\ref{def: theta}), introduced by Marmaridis \cite{marmaridis}. This class of extensions is broad and includes  trivial extension rings (in particular, triangular matrix rings) and tensor algebras which are of interest to us for this article. For a complete list of examples that can be viewed as cleft extensions of rings we refer you to \cite[Introduction]{beligiannis}.

Cleft extensions of abelian categories, formalized and studied extensively by Beligiannis \cite{beligiannis, beligiannis2}, serve as categorical counterparts to cleft extensions of rings. In particular, any cleft extension of module categories is Morita equivalent to a cleft extension induced by a $\theta$-extension \cite[Proposition~3.3]{beligiannis}. In this paper, we utilize $\theta$-extensions because the definition is more concrete and allows us to handle everything expressed intrinsically in terms of the ring $\Gamma$.

In recent decades, singularity categories have become objects of extensive study because they encode information regarding the singularities of a variety (in geometric settings) or the failure of a ring to have finite global dimension (in algebraic settings). A fundamental problem in this area is identifying whether two abelian categories with enough projectives have equivalent singularity categories—a phenomenon known as a singular equivalence. Recently, singular equivalences of categories arising from cleft extensions were studied by Kostas \cite{kostas}.


The starting point of this work is the following set of questions:
Assume a finite group $G$ acts by ring automorphisms on a finite dimensional $k$-algebra $\Gamma$, where $k$ is a field, and suppose $\Gamma$ is part of a $\theta$-extension $ \Gamma \! \ltimes_\theta \! M$. Does the group action extend to the $\theta$-extension? How is the skew group ring $(\Gamma \! \ltimes_\theta \! M)\#G$ related to $\Gamma \# G$? Under what conditions does a singular equivalence of a $\theta$-extension of a finite dimensional algebra over a field lifts to a singular equivalence of the corresponding skew group algebra induced by a finite group?
The main result of this work provides definitive answers to these structural and homological questions. Note that such group actions are also an object of interest in several recent works \cite{RR, BeckOber, ChenWang, Demonet, kkp}.

\begin{mainthm} \label{main theorem}
    Let $\Gamma$ be a finite dimensional algebra over a field $k$ and $M$ a $\Gamma$-bimodule that is finitely generated on both sides. Consider a $G$-equivariant $\theta$-extension $\Lambda = \Gamma \! \ltimes_\theta \! M$ where $G$ acts on $\Gamma$ by ring automorphisms. Then the $G$-action extends to $\Lambda$ by automorphisms. Moreover, the skew group algebra $\Lambda \# G $ is isomorphic to the $\widehat{\theta}$-extension $\Gamma \#G \ltimes_{\widehat{\theta}} M \# G$ of the skew group algebra $\Gamma \# G$ by the induced module $M \# G$, equipped with an appropriate homomorphism $\widehat{\theta} \colon M\# G \otimes M\#G \to M\#G$.

    Now, assume that $M$ is tensor nilpotent, $\pd {_\Gamma M} < \infty$, $\pd {M _\Gamma} < \infty$, and $\Tor^\Gamma_i (M^{\otimes j} , M)=0$ for all $i,j \geq 0$. Then the following are equivalent:
    \begin{itemize}
        \item[\textnormal{(i)}] The restriction functor $\mathsf{e}\colon \Mod \Gamma \! \ltimes_\theta \! M \to \Mod \Gamma$ induces a singular equivalence.
        \item[\textnormal{(ii)}] The restriction functor $\mathsf{e}\#G \colon \Mod \Gamma \#G \ltimes_{\widehat{\theta}}  M \# G \to \Mod \Gamma \# G$ induces a singular equivalence.
    \end{itemize}
\end{mainthm}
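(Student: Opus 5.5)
The plan has three parts: an explicit algebra isomorphism, an identification of module categories with equivariant categories, and the transfer of singular equivalences through those identifications.

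\textbf{The action and the isomorphism.} Recall that $\Lambda=\Gamma\ltimes_\theta M$ is $\Gamma\oplus M$ with multiplication
\[
(\gamma,m)(\gamma',m')=(\gamma\gamma',\ \gamma m'+m\gamma'+\theta(m\otimes m')).
\]
$G$-equivariance means $G$ acts on $M$ by additive automorphisms with $g(\gamma m\gamma')=g(\gamma)g(m)g(\gamma')$, and $\theta$ commutes with the action, i.e.\ $\theta(g m\otimes g m')=g\,\theta(m\otimes m')$. Define $g(\gamma,m)=(g\gamma,gm)$. Multiplicativity of this map follows directly from the two compatibilities, so $G$ acts on $\Lambda$ by automorphisms.

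Next, $M\#G=M\otimes_k kG$ becomes a $\Gamma\#G$-bimodule via
\[
(\gamma g)(m h)(\gamma' h')=\gamma\, g(m)\, gh(\gamma')\, ghh'.
\]
Set $\widehat\theta(mg\otimes m'h)=\theta(m\otimes g m')\,gh$. One checks that this is well defined on $\otimes_{\Gamma\#G}$ and bimodule-linear, and that the associativity condition of Definition~\ref{def: theta} transfers from $\theta$; all of these are routine consequences of equivariance. The map
\[
\Lambda\#G\to \Gamma\#G\ltimes_{\widehat\theta}M\#G,\qquad (\gamma,m)g\mapsto(\gamma g,mg),
\]
is a $k$-linear bijection. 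Comparing products on generators shows it is multiplicative.

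\textbf{Equivariant categories.} For the equivalence (i)$\Leftrightarrow$(ii), I will identify $\Mod \Lambda\#G$ with the equivariant category $(\Mod\Lambda)^G$, and $\Mod\Gamma\#G$ with $(\Mod\Gamma)^G$. This is standard for skew group algebras, since $\Mod A\#G$ is the category of $A$-modules with compatible $G$-action. Under these identifications, $\mathsf e\#G$ is the functor $\mathsf e^G$ induced by the $G$-equivariant functor $\mathsf e$, since restriction along $\Gamma\to\Lambda$ commutes with twisting by $g$. By the earlier results on equivariant cleft extensions, the whole cleft extension diagram $(\mathsf e,\mathsf i,\mathsf l,\mathsf q,\dots)$ lifts to a cleft extension of $(\Mod\Lambda)^G$ over $(\Mod\Gamma)^G$. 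The hypotheses also transfer: $M\#G$ is tensor nilpotent because $(M\#G)^{\otimes n}\cong M^{\otimes n}\#G$; its projective dimensions on both sides are finite because $\Gamma\#G$ is free over $\Gamma$; and the Tor vanishing holds because
\[
\Tor^{\Gamma\#G}_i(M^{\otimes j}\#G,\ M\#G)\cong \Tor^\Gamma_i(M^{\otimes j},M)\otimes kG.
\]
Hence Kostas's criterion \cite{kostas} applies on both sides. It says $\mathsf e$ induces a singular equivalence if and only if some homological condition holds, such as $\mathsf e$ preserving finite projective dimension in both directions.

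\textbf{Transfer, and the main obstacle.} The key step is to show that this homological condition holds for $\mathsf e$ exactly when it holds for $\mathsf e^G$. I would use the induction/forgetful adjunction $\mathsf F\colon(\Mod\Lambda)^G\rightleftarrows\Mod\Lambda\colon \mathsf{Ind}$ on each side, together with three facts:
\begin{itemize}
\item $\mathsf F$ and $\mathsf{Ind}$ commute with $\mathsf e$ and its adjoints.
\item Both functors are exact and preserve projectives.
\item $\mathsf F\,\mathsf{Ind}\,X\cong\bigoplus_g gX$, and every equivariant object is a summand of $\mathsf{Ind}\,\mathsf F$ of itself.
\end{itemize}
The summand property needs $|G|$ invertible in $k$. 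Without that assumption I would instead argue directly: projective dimension over $A\#G$ equals projective dimension over $A$, since $A\#G$ is free over $A$ on both sides and an $A\#G$-resolution restricts to an $A$-resolution. Then, if $\mathsf e$ satisfies the criterion, $\mathsf e^G$ satisfies it after forgetting, and finiteness of projective dimension is detected by $\mathsf F$; the converse direction goes through $\mathsf{Ind}$. The main obstacle is precisely this detection step: confirming that finiteness of projective dimension is reflected by $\mathsf F$ in the generality of the paper, and that Kostas's criterion is stated in terms where the transfer through $\mathsf F$ and $\mathsf{Ind}$ is this clean. If the criterion in \cite{kostas} instead involves $\mathsf q$ or the functor $-\otimes M$, the same compatibility argument applies to those functors.
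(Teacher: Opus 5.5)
\textbf{Verdict: there is a genuine gap.} The (i)$\Rightarrow$(ii) direction needs $|G|$ invertible in $k$, and without it the implication is false.

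\textbf{What is fine.} Your first part is exactly the paper's argument (Lemma~\ref{transfer of action for theta extensions} and Theorem~\ref{thm: main A}). That is, the action $({}^g\gamma,{}^gm)$, the bimodule $M\#G$, the map $\widehat\theta$, and the map $(\gamma,m)\#g\mapsto(\gamma\#g,m\#g)$. Your direct check that $M\#G$ inherits nilpotence, finite projective dimension on both sides and Tor-vanishing is also fine. It is a hands-on substitute for Proposition~\ref{prop:equivariant_conditions}. Since $M\#G\cong \Gamma\#G\otimes_\Gamma M$ (resp.\ $M\otimes_\Gamma \Gamma\#G$) as left (resp.\ right) modules, this check works for arbitrary $|G|$.

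\textbf{The gap.} The problem is in the step you flagged as the obstacle. Your claim that projective dimension over $A\#G$ equals projective dimension over $A$ is false. Restricting an $A\#G$-resolution only gives $\pd_A X\le \pd_{A\#G}X$ (Lemma~\ref{lem:pd_equiv}(ii)). For example, take $A=k$, $\operatorname{char}k=p$, $G=\mathbb{Z}/p$: the trivial module has $\pd_k k=0$ but $\pd_{kG}k=\infty$. The reverse inequality is exactly the statement that $(X,\chi)$ is a summand of $\mathsf{Ind}(X)$, and that needs $|G|$ invertible.

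This cannot be repaired, because (i)$\Rightarrow$(ii) is false without that hypothesis. Take $\operatorname{char}k=2$, $G=\mathbb{Z}/2$ acting trivially, $\Gamma=k\times k$, $M=k$ with $\Gamma$ acting on the left through $e_1$ and on the right through $e_2$, and $\theta=0$. The hypotheses on $M$ hold trivially, since $\Gamma$ is semisimple and $M\otimes_\Gamma M=0$.
\begin{itemize}
\item Here $\Lambda$ is the upper triangular $2\times 2$ matrix algebra, which is hereditary. So $\mathsf{D_{sg}}(\Lambda)=0=\mathsf{D_{sg}}(\Gamma)$ and (i) holds.
\item On the other side, $\Gamma\#G=kG\times kG$ with $kG\cong k[x]/(x^2)$. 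Also $M\#G$ is projective on both sides and $(M\#G)^{\otimes 2}=0$, so Kostas's theorem applies.
\item But $\mathsf F^G=M\#G\otimes_{\Gamma\#G}-$ sends $(0,k)$ to $(k,0)$, which has infinite projective dimension over $kG\times kG$. Hence $\mathbb{L}_{\mathsf{sg}}\mathsf F^G\not\simeq 0$, and (ii) fails.
\end{itemize}
The paper's actual proof (Theorem~\ref{thm: main application}) assumes $|G|$ invertible in $\Gamma$; the introduction's statement silently drops this. Only (ii)$\Rightarrow$(i) holds in general (Remark~\ref{rem: converse without |G|}).

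\textbf{The criterion to transfer.} You also need to pin down the criterion you are transferring. Kostas's Theorem~\ref{thm: equivalence_singularity} does not say ``$\mathsf e$ preserves finite projective dimension in both directions''. Reflection of finite projective dimension is hypothesis (c), and it is automatic here. The theorem says that $\mathsf e$ is a singular equivalence iff $\mathbb{L}_{\mathsf{sg}}\mathsf F\simeq 0$, that is, $M\otimes^{\mathbb L}_\Gamma X\in\mathsf{K^b}(\proj\Gamma)$ for every $X\in\smod\Gamma$.

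Once $|G|$ is invertible, your $\mathsf{For}/\mathsf{Ind}$ strategy applied to this condition is what the paper does in Theorem~\ref{thm: singularity_vanishing}. One has $\mathsf{For}\circ\mathbb{L}\mathsf F^G\simeq\mathbb{L}\mathsf F\circ\mathsf{For}$.
\begin{itemize}
\item For (i)$\Rightarrow$(ii), you need that a bounded complex of $\Gamma\#G$-modules whose restriction to $\Gamma$ is perfect is itself perfect. This is where invertibility enters; in the paper it comes through $\Proj\mathcal B^G=(\Proj\mathcal B)^G$ and the comparison equivalence~(\ref{eq:comparison_projectives}).
\item For (ii)$\Rightarrow$(i), apply the condition to $\mathsf{Ind}X$. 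Then $\mathbb{L}\mathsf F(X)$ is a summand of $\bigoplus_{g}\mathbb{L}\mathsf F({}^gX)$, and $\mathsf{K^b}(\proj\Gamma)$ is thick.
\end{itemize}
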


In particular, if $\pd {_\Gamma M _\Gamma} < \infty$, then condition (i) holds by \cite[Lemma~3.5]{Dalezios} coupled with \cite[Corollary~6.13]{kostas}. Consequently, we obtain the following corollary.

\pagebreak

\begin{cor} \label{main: cor}
Let $\Gamma$ be a finite dimensional algebra over a field $k$ and $M$ a $\Gamma$-bimodule that is finitely generated on both sides and consider a $G$-equivariant $\theta$-extension $\Lambda = \Gamma \ltimes_\theta M$ where $G$ acts on $\Gamma$ by ring automorphisms. Assume that $M$ is tensor nilpotent, $\pd {_\Gamma M _\Gamma} < \infty$ and $\Tor^\Gamma_i (M^{\otimes j} , M)=0$ for all $i,j \geq 0$. Then there exist singular equivalences:
\[
\mathsf{D_{sg}}(\Gamma \! \ltimes_\theta \! M)  \to \mathsf{D_{sg}}(\Gamma) \quad \textnormal{and} \quad \mathsf{D_{sg}}(\Gamma\# G \ltimes_{\widehat{\theta}} M\#G)  \to \mathsf{D_{sg}}(\Gamma\#G).
\]
\end{cor}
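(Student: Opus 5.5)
The corollary follows from the Main Theorem once condition (i) is verified under the stronger hypothesis $\pd {_\Gamma M _\Gamma} < \infty$.

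\emph{Step 1: one-sided projective dimensions.} Since $\Gamma$ is a finite dimensional $k$-algebra, $\Gamma^{e} = \Gamma \otimes_k \Gamma^{\mathrm{op}}$ is projective as a left and as a right $\Gamma$-module. Hence a finite projective bimodule resolution of ${_\Gamma M _\Gamma}$ restricts to finite projective resolutions of ${_\Gamma M}$ and of $M_\Gamma$. So $\pd {_\Gamma M} < \infty$ and $\pd {M _\Gamma} < \infty$, and all hypotheses of the second part of the Main Theorem are satisfied.

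\emph{Step 2: condition (i).} \cite[Lemma~3.5]{Dalezios} applies to a tensor nilpotent bimodule $M$ with $\pd {_\Gamma M_\Gamma}<\infty$ and the vanishing $\Tor^\Gamma_i (M^{\otimes j}, M)=0$. It gives finite projective dimension for the relevant tensor powers, or for $\Lambda$ over $\Gamma$, which is the input required by \cite[Corollary~6.13]{kostas}. That corollary then states that the restriction functor $\mathsf{e}\colon \Mod \Gamma \ltimes_\theta M \to \Mod \Gamma$ induces a singular equivalence. Restricting to finitely generated modules, which is legitimate because $M$ is finitely generated on both sides, yields $\mathsf{D_{sg}}(\Gamma \ltimes_\theta M) \to \mathsf{D_{sg}}(\Gamma)$. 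This is the first equivalence, and it is exactly condition (i).

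\emph{Step 3: the equivariant equivalence.} By the first part of the Main Theorem, $G$ acts on $\Lambda$ and $\Lambda \# G \cong \Gamma\#G \ltimes_{\widehat{\theta}} M\#G$. By the equivalence (i)$\Leftrightarrow$(ii), the functor $\mathsf{e}\#G$ induces a singular equivalence $\mathsf{D_{sg}}(\Gamma\# G \ltimes_{\widehat{\theta}} M\#G) \to \mathsf{D_{sg}}(\Gamma\#G)$.

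The main obstacle is Step 2. It requires checking that the precise hypotheses of \cite[Lemma~3.5]{Dalezios} and \cite[Corollary~6.13]{kostas} match ours: tensor nilpotence, the Tor-vanishing, and the finiteness conditions. Everything else is formal. If the cited corollary is stated only for the ambient module categories, the passage to $\mathsf{D_{sg}}$ of finitely generated modules may also need a brief remark.
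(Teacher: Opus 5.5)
Your route is exactly the paper's. The paper gets the corollary in one line: $\pd {_\Gamma M_\Gamma}<\infty$ gives condition (i) of the Main Theorem via \cite[Lemma~3.5]{Dalezios} and \cite[Corollary~6.13]{kostas}, and then (i)$\Rightarrow$(ii) is invoked. Your Step~1 is a correct and useful addition. Step~2 is less delicate than you fear. A finite resolution $P_\bullet\to M$ by finitely generated projective bimodules gives $\mathbb{L}\mathsf{F}(X)\cong P_\bullet\otimes_\Gamma X$, which is a bounded complex of finitely generated projectives because $(\Gamma\otimes_k\Gamma)\otimes_\Gamma X\cong\Gamma\otimes_k X$. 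So $\mathbb{L}_{\mathsf{sg}}\mathsf{F}\simeq 0$, and Theorem~\ref{thm: equivalence_singularity} applies.

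The genuine gap is in Step~3, which you call formal. The implication (i)$\Rightarrow$(ii) is proved in the body only when $|G|$ is invertible in $k$: this is Theorem~\ref{thm: main application}, via Theorems~\ref{thm: main B} and~\ref{thm: singularity_vanishing}. Invertibility enters through Lemma~\ref{lem:pd_equiv}(ii), Proposition~\ref{finitenessoflocaldimension} and the identification $\Proj\mathcal{B}^G=(\Proj\mathcal{B})^G$. That hypothesis is missing from the statement, as it is from the paper's one-line derivation, and it cannot be dropped.

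Here is a counterexample. Take $\operatorname{char}k=2$ and $G=\mathbb{Z}/2$ acting trivially on $\Gamma=k\times k$ and on $M=k$, with $(a,b)\cdot m\cdot(a',b')=amb'$ and $\theta=0$.
\begin{itemize}
\item[(1)] $M\otimes_\Gamma M=0$, and $M$ is projective over the semisimple algebra $\Gamma\otimes_k\Gamma^{\mathrm{op}}$.
\item[(2)] The Tor conditions hold for $i\geq 1$. They must be read this way, since for $i=j=0$ the condition as literally written says $\Gamma\otimes_\Gamma M=M=0$.
\item[(3)] $\Lambda$ is the hereditary algebra of upper triangular $2\times 2$ matrices over $k$, and $\Gamma$ is semisimple. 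So both singularity categories vanish and the first equivalence holds trivially.
\item[(4)] However, $\Lambda\#G\cong T_2(kG)$, the upper triangular matrix algebra over $kG\cong k[x]/(x^2)$, and $\Gamma\#G\cong kG\times kG$.
\item[(5)] Since $T_2(kG)$ is Gorenstein, $\mathsf{D_{sg}}(T_2(kG))$ is the stable category of monomorphisms in $\smod kG$. Its indecomposables are $(0\to k)$, $(k\xrightarrow{=}k)$ and $(k\hookrightarrow kG)$.
\item[(6)] By contrast, $\mathsf{D_{sg}}(kG\times kG)\simeq\underline{\mathrm{mod}}\,kG\times\underline{\mathrm{mod}}\,kG$ has only two indecomposables.
\end{itemize}
So no singular equivalence exists here. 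This is consistent with Kostas' criterion, since $\mathbb{L}_{\mathsf{sg}}\mathsf{F}^G(0,k)=(k,0)\neq 0$.

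To repair the argument, add ``$|G|$ invertible in $k$'' to the hypotheses. Then let Step~3 cite Theorem~\ref{thm: main application}, where that assumption is actually used.
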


To prove the Main Theorem, we establish several general results regarding equivariant cleft extensions of abelian categories and their singularity categories. Equivariant categories arise as a type of categorical quotient by a finite group \cite{Elagin, BeckOber, git, ChaoSun, XiaoWuChen2, Mumford}; such group actions on module categories are induced by actions by automorphisms on rings \cite{Demonet,RR}. We establish necessary conditions for a cleft extension to lift into the equivariant setting (Proposition~\ref{prop:equivariant_cleft_properties}). We generalize known results (e.g.\ \cite[Proposition~3.10]{ChenWang}) to the setting of $\theta$-extensions by demonstrating that a $G$-equivariant $\theta$-extension $\Gamma \ltimes_\theta M$ (Definition~\ref{def: G-equivariant theta extensions}) induces a canonical $\widehat{\theta}$-extension $\Gamma \# G \ltimes_{\widehat{\theta}} M\#G$, which is isomorphic to the skew group ring $(\Gamma \ltimes_\theta M ) \# G$ (Theorem~\ref{thm: main A}).

The interplay between singular equivalences and equivariant categories remains a topic of active interest \cite{kkp, ChaoSun}. Assuming the order $|G|$ is invertible in the relevant categories, it is known that if an exact $G$-functor $\mathsf{e} \colon \mathcal{A} \to \mathcal{B}$ induces a singular equivalence, then $\mathsf{e}^G \colon \mathcal{A}^G \to \mathcal{B}^G$ induces a singular equivalence up to retracts; see \cite[Theorem~6.11]{kkp}. 
We prove that if $\mathsf{e} \colon \mathcal{A} \to \mathcal{B}$ is part of the data of a $G$-equivariant cleft extension (Definition~\ref{def: G-lift of cleft}), then $\mathsf{e}^G$ upgrades to an honest singular equivalence, and moreover, the converse also holds (Theorem~\ref{thm: main B}). We remark that in the presence of a cleft extension, the converse holds without assuming the invertibility of $|G|$ (Remark~\ref{rem: converse without |G|}). Finally, as a necessary step toward our results on singularity categories, we generalize several properties of derived functors of exact functors from \cite[Section~5.1]{kkp} to the left derived functors of right exact functors, and dually, to the right derived functors of left exact functors.

\subsection*{\textbf{Structure of the paper.}}
In Section \ref{preliminaries}, we establish the necessary background on cleft extensions, $\theta$-extensions, and equivariant categories. Section~\ref{sec: equivariant lifts} presents our first main results regarding the lift of cleft extensions to the equivariant setting, alongside an analysis of skew group $\theta$-extensions. In Section~\ref{sec: equivariant derived cats}, we establish key facts concerning equivariant derived categories and the derived functors of $G$-functors. Building on this machinery, Section~\ref{sec: equivariant singularity categories} lifts singular equivalences to the equivariant setting. Finally, in Section~\ref{sec: application}, we apply this accumulated theory to the skew group rings of $\theta$-extensions and demonstrate our results through concrete examples.

\begin{connot}
For an abelian category $\mathcal{A}$, we denote by $\mathsf{K}^?(\mathcal{A})$ (resp.\ $\mathsf{D}^?(\mathcal{A})$) the homotopy (resp.\ derived) category of cochain complexes in $\mathcal{A}$, where $? \in \{ \varnothing, -, +, \mathsf{b} \}$ denotes unbounded, bounded above, bounded below, and bounded complexes, respectively.
For a ring $R$, we denote the category of all left $R$-modules by $\Mod{R}$, and we consider its full subcategories: $\Proj{R}$ (projective modules), $\proj{R}$ (finitely generated projective modules), $\Inj R$ (injective modules), and $\smod{R}$ (finitely presented modules). We say that a positive integer $n$ is \emph{invertible} in an additive category $\mathcal{A}$ if multiplication by $n$ is an automorphism on every hom-group. All rings are associative with identity, and $G$ denotes a finite group, written multiplicatively.
\end{connot}

\section{Preliminaries}\label{preliminaries}

\subsection{Cleft extensions of abelian categories}
We begin by recalling the structure of cleft extensions of abelian categories, as developed by Beligiannis \cite{beligiannis, beligiannis2}.

\begin{defn} \label{cleft of abelian}
    A \emph{cleft extension} of an abelian category $\mathcal{B}$ is an abelian category $\mathcal{A}$ together with functors:
    \begin{center}
        \begin{tikzcd}
\mathcal{B} \arrow[rr, "\mathsf{i}"] &  & \mathcal{A} \arrow[rr, "\mathsf{e}"] &  & \mathcal{B}, \arrow[ll, "\mathsf{l}"', bend right]
\end{tikzcd}
    \end{center}
    henceforth denoted by $(\mathcal{B},\mathcal{A},\mathsf{i},\mathsf{e},\mathsf{l})$, such that the following conditions are satisfied: 
    \begin{itemize}
        \item[(i)] the functor $\mathsf{e}$ is faithful exact,
        \item[(ii)] the pair $(\mathsf{l},\mathsf{e})$ is an adjoint pair,
        \item[(iii)] there is a natural isomorphism $\mathsf{ei}\simeq \mathsf{Id}_{\mathcal{B}}$ of functors. 
    \end{itemize}
\end{defn}

From the definition, a cleft extension naturally admits important extra structure. There exists a functor $\mathsf{q} \colon \mathcal{A} \to \mathcal{B}$ which is a left adjoint to $\mathsf{i}$, see \cite[Proposition~2.3]{beligiannis}.
Let $\d \colon \mathsf{Id}_\mathcal{B} \to \mathsf{e}\mathsf{l}$ and $\e \colon \mathsf{l} \mathsf{e} \to \mathsf{Id}_\mathcal{A}$ denote the unit and the counit of the adjunction $(\mathsf{l},\mathsf{e})$, respectively.
We consider the functor $\mathsf{G} \colon \mathcal{A} \to \mathcal{A}$ defined as $\Ker (\e)$ and the functor $\mathsf{F}\colon \mathcal{B} \to \mathcal{B}$ defined as the composition $\mathsf{e}\mathsf{G}\mathsf{i}$.  Note that there exists a natural isomorphism  $\mathsf{el}\simeq \mathsf{Id}_{\mathcal{B}}\oplus\mathsf{F}$.

Note that any cleft extension of rings (as defined in the introduction) gives rise to a cleft extension of module categories induced by the restriction and induction functors along the ring homomorphisms.

\begin{rem} \label{rem: eta extensions}
    We remark that cleft extensions of abelian categories are equivalent to $\eta$-extensions of abelian categories (see \cite[Theorem~2.6]{beligiannis}), which were introduced in \cite{marmaridis} as a natural generalization of trivial extensions studied extensively in \cite{FGR, PR} and serve as a categorical counterpart to $\theta$-extensions; see also \cite[Proposition~5.2]{marmaridis}. Given an additive right exact endofunctor $\mathsf{F}\colon \mathcal{B} \to \mathcal{B}$ on an abelian category $\mathcal{B}$ and an associative natural transformation $\eta\colon \mathsf{F}^2 \to \mathsf{F}$, we define the \emph{$\eta$-extension} of $\mathcal{B}$ by $\mathsf{F}$ to be the additive category $\mathcal{B} \ltimes_\eta \mathsf{F}$ whose objects are pairs $(X,u)$ where $X$ is an object of $\mathcal{B}$ and $u\colon \mathsf{F}(X)\rightarrow X$ is a morphism in $\mathcal{B}$ such that $u\circ \mathsf{F}u=u\circ \eta^X$, and whose morphisms $f\colon (X,u)\rightarrow(Y,v)$ are morphisms in $\mathcal{B}$ such that $v\circ \mathsf{F}f=f\circ u$. This is an abelian category by \cite[Proposition~1.2]{marmaridis}. Note that the functor $\mathsf{F}$ induced by a cleft extension is precisely the homonymous functor of the associated $\eta$-extension.
\end{rem}

\subsection{Cleft and $\theta$-extensions of rings} 
The class of $\theta$-extensions was introduced in \cite{marmaridis} because it contains a large class of rings arising as extensions (see \cite{marmaridis}) and significantly simplifies complex computations. The importance of $\theta$-extensions lies in the fact that they provide a ``canonical form” of cleft extensions of rings, as we shall explain now. 

\begin{defn} \label{def: theta}
    Let $\Gamma$ be a ring, $M$ a $\Gamma$-$\Gamma$-bimodule and $\theta\colon M\otimes_{\Gamma}M\rightarrow M$ an associative $\Gamma$-bimodule homomorphism. The \emph{$\theta$-extension of $\Gamma$ by $M$}, denoted by $\Gamma\!\ltimes\!_{\theta}M$, is defined to be the ring with underlying group $\Gamma\oplus M$ and multiplication given by
    \[
    (\gamma,m)\cdot (\gamma',m'):=(\gamma\gamma',\gamma m'+m\gamma'+\theta(m\otimes m')),
    \]
    for $\gamma,\gamma'\in\Gamma$ and $m,m'\in M$. 
\end{defn}

Observe that any $\theta$-extension induces a cleft extension of rings by defining $i \colon \Gamma \ltimes_\theta M \to \Gamma$ and $e \colon \Gamma \to \Gamma \ltimes_\theta M$ by $(\gamma,m)\mapsto \gamma$ and $\gamma\mapsto (\gamma,0)$, respectively. 
For the converse, given a cleft extension $i\colon  \Lambda \to \Gamma$ and $e \colon \Gamma \to \Lambda$, set $M = \Ker(i)$ and let $\theta\colon M \otimes M \to M$ be the multiplication $m \otimes m' = mm'$, then $\Lambda \cong \Gamma \ltimes_\theta M$, see \cite[Section~3]{beligiannis}.
Hence, the $\theta$ extension gives rise to the following cleft extension of module categories:

\[
\begin{tikzcd}
\Mod \Gamma \arrow[rr, "\mathsf{i}"] &  & \Mod \Gamma\!\ltimes\!_{\theta}M \arrow[rr, "\mathsf{e}"] \arrow[ll, "\mathsf{q}=\Gamma\otimes_{\Gamma\ltimes_{\theta}M}-"', bend right] &  & \Mod \Gamma \arrow[ll, "\mathsf{l}=\Gamma\ltimes_{\theta} M \otimes_{\Gamma}-"', bend right]
\end{tikzcd}
\]
where $\mathsf{i}$ and $\mathsf{e}$ denote the restriction functors along the ring homomorphisms $i$ and $e$. By Watts' theorem, the induced endofunctor $\mathsf{F}$ on $\Mod \Gamma$ is naturally isomorphic to $M \otimes_{\Gamma}-$.
If $\theta = 0$, the $\theta$-extension of rings is called \emph{trivial extension} and it is denoted simply by $\Gamma \ltimes M$; see also \cite{FGR}.

\subsection{Equivariant categories} Here we collect the needed theory of equivariant categories. For a thorough exposition of equivariant categories, we refer the reader to \cite{ChaoSun, kkp, Elagin}. Recall that throughout the paper $G$ denotes a finite group written multiplicatively.

Let $\mathcal{B}$ an additive category.
A \emph{left action} of $G$ on $\mathcal{B}$ consists of the following data:
\begin{itemize}
\item[(i)] an additive auto-equivalence $\rho_g \colon \mathcal{B} \to \mathcal{B}$ for all $g$ in $G$, and 
\item[(ii)] a family of \emph{composition isomorphisms} $\theta_{g,h} \colon \rho_g \circ \rho_h \xrightarrow{\simeq} \rho_{gh}$ for all $g,h$ in $G$,
\end{itemize}
satisfying, for all $g,h,k$ in $G$ the following \emph{$2$-cocycle condition}:
\begin{equation*}\label{cocycle}
\begin{tikzcd}
\rho_g \circ \rho_h \circ \rho_k \arrow[rr, "\rho_g \theta_{h,k}"] \arrow[d, "\theta_{g,h} \rho_k"'] & & \rho_g\circ \rho_{hk} \arrow[d, "\theta_{g, hk}"] \\
\rho_{gh}\circ \rho_k \arrow[rr, "\theta_{gh,k}"'] & & \rho_{ghk}
\end{tikzcd}
\end{equation*}
We use the notation $(\mathcal{B},G)$ to keep track of the category along with the data of its $G$-action.

We adopt the notation ${^g \star} \coloneqq \rho_g(\star)$, where $\star$ denotes either an object or a morphism of $\mathcal{B}$. Denote by $1_G $ the unit of $G$ and note that each group action on a category admits a natural isomorphism $\upsilon \colon \rho_{1_G} \to \mathsf{Id}_\mathcal{B}$, called the \emph{unit} of the action.
If the composition isomorphisms $\theta$ are identities, we say the action is \emph{strict}. In this case, $\upsilon = \mathsf{Id}$.
Whenever there is another $G$-action on some other category, say $(\mathcal{A},G)$, we use the notation $\bar{\rho}$ for the auto-equivalences, $ \bar{\theta}$ for the composition isomorphisms and $\bar{\upsilon} \colon \bar{\rho}_{1_G} \xrightarrow{} \mathsf{Id}_{\mathcal{A}}$ for the unit.

A $G$\emph{-functor} $(\mathsf{F},\sigma)\colon (\mathcal{B},G) \to (\mathcal{A},G)$ between additive categories with $G$-actions, consists of a functor $\mathsf{F}\colon \mathcal{B}\to \mathcal{A}$ together with a family $\sigma$ of isomorphisms $\{ \sigma_g\colon\mathsf{F} \circ \rho_g \rightarrow \rho_g \circ \mathsf{F}\}_{g\in G}$ compatible with the associativity conditions of both sides and respecting the units of both actions. That is the following two diagrams commute:
\begin{equation}
\label{G-functor}
\begin{tikzcd}
\mathsf{F}\rho_g\rho_h \arrow[d, "\mathsf{F} \theta_{g,h}"'] \arrow[r, "\sigma_{g}\rho_h"] & \bar{\rho}_g \mathsf{F} \rho_h \arrow[r,"\bar{\rho}_g \sigma_h"] & \bar{\rho}_{g}\bar{\rho}_h \mathsf{F} \arrow[d, "\bar{\theta}_{g,h} \mathsf{F}"] & \qquad & \mathsf{F} \circ \rho_{1_G} \arrow[rr, "\sigma_{1_G}"] \arrow[dr, "\mathsf{F} \circ \upsilon"'] & & \bar{\rho}_{1_G} \circ \mathsf{F} \arrow[dl, "\bar{\upsilon} \circ \mathsf{F}"] \\
\mathsf{F} \rho_{gh} \arrow[rr, "\sigma_{gh}"'] & & \bar{\rho}_{gh}\mathsf{F} & & & \mathsf{F} &
\end{tikzcd}
\end{equation}

\noindent If $\sigma_g = \mathsf{Id}$ for all $g \in G$, then we say that $\mathsf{F}$ is a \emph{strict} $G$-functor.

Let $(\mathsf{F}, \sigma^\mathsf{F})$ and $(\mathsf{H}, \sigma^\mathsf{H})$ be two $G$-functors $(\mathcal{B}, G) \to (\mathcal{A},G)$. A natural transformation $\eta \colon  \mathsf{F}\Rightarrow \mathsf{H}$ is called  \emph{$G$-natural transformation}, denoted by $\eta \colon  (\mathsf{F}, \sigma^\mathsf{F}) \Rightarrow_G (\mathsf{H}, \sigma^\mathsf{H})$, if it commutes with the action:
\begin{equation}\label{G-natural}
    \begin{tikzcd}
\mathsf{F} \rho_g \arrow[r, "\eta \rho_g"] \arrow[d,"\sigma^\mathsf{F}_{g}"'] & \mathsf{H} \rho_g \arrow[d, "\sigma^\mathsf{H}_{g}"] \\
{\bar{\rho}_g}\mathsf{F} \arrow[r, "{\bar{\rho}_g} \eta"'] & {\bar{\rho}_g}\mathsf{H}
    \end{tikzcd}
\end{equation}

\begin{defn}{\label{def:equivcats}}
Assume that $G$ acts on an additive category $\mathcal{B}$. A \emph{$G$-equivariant object} of $\mathcal{B}$ is a pair $(X , \chi) $ where $X$ belongs in $\mathcal{B}$ and $\chi$ is a family of isomorphisms $\{ \chi_g : X \xrightarrow[]{\sim} {^g X} \}_{g\in G}$ called the \emph{linearization} of $X$, such that the following diagram commutes for all $g,h$ in $G$:
    \begin{equation*}
        \begin{tikzcd}
            X \arrow[r, "\chi_g"] \arrow[rrr, bend right=20, "\chi_{gh}"'] & {^g \! X} \arrow[r, "{^g (\chi_h)}"] & {^g ( {^h \! X})} \arrow[r, "\theta_{g,h}"] & {^{gh} \! X}
        \end{tikzcd}
    \end{equation*}
The action $(\mathcal{B},G)$ gives rise to a $G$-equivariant category, denoted by $\mathcal{B}^G$, whose objects are the $G$-equivariant objects and morphisms those who respect the linearizations, that is a morphism $f \colon (X, \chi) \to (Y,\psi)$ is a morphism of $\mathcal{B}$ such that $\psi_g f = f \chi_g$, for all $g$ in $G$.
\end{defn}


If the finite group $G$ acts on an abelian (resp.\ additive) category $\mathcal{B}$, then the equivariant category $\mathcal{B}^G$ is also abelian (resp.\ additive). Indeed, we have that $0\to (X, \chi)\to (Y, \psi)\to (Z , \zeta) \to 0$ is a short exact sequence in $\mathcal{B}^G$ if and only if $0\to X\to Y\to Z\to 0$ is a short exact sequence in $\mathcal{B}$. 
Observe that the zero object of $\mathcal{A}^G$ is the (uniquely) linearized zero object of $\mathcal{A}$. Kernels, cokernels and images of moprhisms in the equivariant category are linearized kernels, cokernels and images of the morphisms in the underlying category \cite[Remark~2.6]{kkp}. Finally, isomorphic objects admit isomorphic linearized objects, thus kernels, cokernels and images admit unique linearizations \cite[Remark~2.7]{kkp}. 

\begin{rem}
\label{gfunctorinducesequivariant}
A $G$-functor $(\mathsf{F},\sigma)\colon (\mathcal{B},G) \rightarrow (\mathcal{A},G)$ induces an \emph{equivariant functor} $\mathsf{F}^G\colon \mathcal{B}^G \rightarrow \mathcal{A}^G$, by $\mathsf{F}^G (X ,\chi) \coloneqq (\mathsf{F}X, \sigma\mathsf{F}\chi)$ where $(\sigma\mathsf{F}\chi ) _g \coloneqq \sigma_g^{^g\!X}  \mathsf{F}(\chi_g)$. 
(see \cite[Lemma~2.9]{kkp} for a complete proof).
If $\mathsf{F}$ is additive (resp.\ full, resp.\ faithful, resp.\ left exact, resp.\ right exact) then so is $\mathsf{F}^G$.

Similarly, a $G$-natural transformation $\eta\colon (\mathsf{F}, \sigma^\mathsf{F}) \Rightarrow_G (\mathsf{H}, \sigma^\mathsf{H}) $, induces an \emph{equivariant natural transformation} $\eta^G \colon  \mathsf{F}^G \Rightarrow \mathsf{H}^G$ such that $\eta^G_{(X, \chi)} =\eta_X$.
\end{rem}

\begin{lem}\label{lem:ker_cok_G_functor}
    Given a $G$-natural transformation $\eta\colon (\mathsf{F},\sigma^\mathsf{F}) \Rightarrow_G (\mathsf{H}, \sigma^\mathsf{H})$ of $G$-functors between abelian categories $\mathcal{B}$ and $\mathcal{A}$, then $\ker(\eta)$ is naturally a $G$-functor and $\ker (\eta^G) \simeq (\ker \eta)^G$. Similarly, the same holds for the cokernel of $\eta$.
\end{lem}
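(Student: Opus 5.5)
Let $\mathsf{K} \coloneqq \ker(\eta)\colon \mathcal{B}\to\mathcal{A}$, computed objectwise, with inclusion $\kappa\colon \mathsf{K}\Rightarrow \mathsf{F}$. The plan is to construct $\sigma^{\mathsf{K}}$ from the universal property of kernels, check the $G$-functor axioms by using that $\kappa$ is monic, and then compare $\mathsf{K}^G$ with $\ker(\eta^G)$ via the description of kernels in $\mathcal{A}^G$ from \cite[Remark~2.6]{kkp}.

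\textbf{Step 1: the structure maps.} For each $g$, the auto-equivalence $\bar\rho_g$ is exact. Hence $\bar\rho_g\kappa\colon \bar\rho_g\mathsf{K}\to\bar\rho_g\mathsf{F}$ is a kernel of $\bar\rho_g\eta$. Likewise $\kappa\rho_g$ is a kernel of $\eta\rho_g$, since kernels are computed pointwise. By the $G$-naturality square \eqref{G-natural}, we have $(\bar\rho_g\eta)\circ\sigma^{\mathsf F}_g\circ\kappa\rho_g=\sigma^{\mathsf H}_g\circ\eta\rho_g\circ\kappa\rho_g=0$. So there is a unique $\sigma^{\mathsf K}_g\colon \mathsf K\rho_g\to\bar\rho_g\mathsf K$ satisfying $\bar\rho_g\kappa\circ\sigma^{\mathsf K}_g=\sigma^{\mathsf F}_g\circ\kappa\rho_g$. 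Since $\sigma^{\mathsf F}_g$ and $\sigma^{\mathsf H}_g$ are isomorphisms, the induced map between kernels is an isomorphism; its inverse is obtained symmetrically from the inverses. Naturality of $\sigma^{\mathsf K}_g$ follows from uniqueness.

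\textbf{Step 2: the $G$-functor axioms.} Both diagrams in \eqref{G-functor} for $\mathsf K$ are checked after postcomposing with the monomorphisms $\bar\rho_{gh}\kappa$ and $\kappa$ respectively. For the cocycle diagram, naturality of $\bar\theta_{g,h}$ together with the defining relation of $\sigma^{\mathsf K}$ applied twice converts both composites into the corresponding composites for $\mathsf F$, precomposed with $\kappa\rho_g\rho_h$. These agree because $\mathsf F$ is a $G$-functor. The unit diagram is handled the same way, using naturality of $\bar\upsilon$. Moreover, $\kappa$ is then by construction a $G$-natural transformation $\mathsf K\Rightarrow_G\mathsf F$.

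\textbf{Step 3: comparison in $\mathcal{A}^G$.} For $(X,\chi)\in\mathcal B^G$, the object $\mathsf K^G(X,\chi)=(\mathsf KX,(\sigma^{\mathsf K}\mathsf K\chi))$ comes with the morphism $\kappa_X=\kappa^G_{(X,\chi)}$, which is equivariant by Step 2 and Remark~\ref{gfunctorinducesequivariant}. Its underlying morphism is a kernel of $\eta_X=\eta^G_{(X,\chi)}$ in $\mathcal A$. By \cite[Remark~2.6, 2.7]{kkp}, kernels in $\mathcal A^G$ are the uniquely linearized kernels in $\mathcal A$. Hence $\kappa^G_{(X,\chi)}$ is a kernel of $\eta^G_{(X,\chi)}$ in $\mathcal{A}^G$, and the resulting isomorphisms $\ker(\eta^G)\simeq(\ker\eta)^G$ are natural by uniqueness of induced maps between kernels. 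The cokernel statement is dual: $\bar\rho_g$ preserves cokernels, and the defining relations are checked after precomposing with epimorphisms.

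The main obstacle is the bookkeeping in Step 2, namely the cocycle check. It reduces cleanly to the axiom for $\mathsf F$ provided one first establishes the identity $\bar\rho_g\bar\rho_h\kappa\circ\bar\rho_g\sigma^{\mathsf K}_h=\bar\rho_g\sigma^{\mathsf F}_h\circ\bar\rho_g\kappa\rho_h$, which is the image of the defining relation under $\bar\rho_g$.
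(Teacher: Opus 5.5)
Your argument is correct. It is exactly the ``standard check'' the paper omits: you build $\sigma^{\mathsf K}_g$ from the universal property of the kernel, using exactness of $\bar\rho_g$ and the $G$-naturality of $\eta$. You then verify the cocycle and unit axioms after composing with the monomorphisms $\bar\rho_{gh}\kappa$ and $\kappa$, identify $\kappa^G$ as a kernel of $\eta^G$ in $\mathcal{A}^G$, and treat the cokernel case dually. In Step~3 you could even avoid citing \cite{kkp}, since any factorization through $\kappa_X$ of an equivariant morphism is automatically equivariant because $\bar\rho_g\kappa_X$ is a monomorphism.
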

\begin{proof}
    The proof is a standard check and it is ommited.
\end{proof}

\begin{rem}
\label{compositionGfunctors}
Let $(\mathsf{H},\sigma^\mathsf{H})\colon (\mathcal{C},G) \rightarrow (\mathcal{B},G)$ and $(\mathsf{F},\sigma^\mathsf{F})\colon (\mathcal{B},G) \rightarrow (\mathcal{A},G)$ be two $G$-functors and $\mathsf{H}^G\colon \mathcal{C}^G\rightarrow \mathcal{B}^G, \ \mathsf{F}^G\colon \mathcal{B}^G \rightarrow \mathcal{A}^G$ the induced equivariant functors. Then, we observe that $(\mathsf{F} \circ \mathsf{H}, \sigma^{\mathsf{F} \circ \mathsf{H}})$, with $\sigma^{\mathsf{F}\circ \mathsf{H}}_g = \sigma^\mathsf{F}_g \mathsf{H} \circ \mathsf{F} \sigma^\mathsf{H}_g$ for all $g \in G$, is a $G$-functor and $(\mathsf{F}\circ \mathsf{H})^G = \mathsf{F}^G \circ \mathsf{H}^G$.
\end{rem}

\begin{rem} \label{inclusionfunctor}
Let $\mathcal{B}$ be a \emph{$G$-invariant subcategory} of $\mathcal{A}$, i.e.\ ${^g X} \in \mathcal{B}$ for all $X \in \mathcal{B}$ and $g \in G$ (where the action on $\mathcal{A}$ is given by $\bar{\rho}$). This means equivalently that $\bar{\rho}_g(\mathcal{B}) \simeq \mathcal{B}$. In this case, the action of $G$ on $\mathcal{A}$ restricts to an action of $G$ on $\mathcal{B}$. The inclusion functor $\mathcal{B} \hookrightarrow \mathcal{A}$ is naturally a strict $G$-functor (where the 2-isomorphisms $\sigma$ are identities). 
Taking $\mathcal{A}=\mathcal{B}$ yields that $\mathsf{Id}_{\mathcal{A}}$ is a $G$-functor and the induced equivariant functor $\mathsf{Id}^G_{\mathcal{A}}$ is $\mathsf{Id}_{\mathcal{A}^G}$.

In general, given a fully faithful functor $\mathsf{i}\colon \mathcal{B} \to \mathcal{A} $ we can identify $\mathcal{B}$ with its essential image $\mathsf{Im}(\mathsf{i})$, and thus we can view it as a full subcategory of $\mathcal{A}$. If both categories admit $G$-actions, then $\mathcal{B}$ is a $G$-invariant subcategory of $\mathcal{A}$ if and only if $\mathsf{i}$ admits the structure of a $G$-functor; see also \cite[Lemma~2.17]{kkp}.
\end{rem}

\begin{lem}\textnormal{(\!\!\cite[Lemma~2.19]{ChaoSun})}
\label{adjointequiv}
Suppose that $(\mathsf{F}, \mathsf{H})$ is an adjoint pair, between additive categories with $G$-actions. If either of the two functors is a $G$-functor, then the other one admits a canonical $G$-functor structure, such that the unit $\upsilon \colon \mathrm{Id} \Rightarrow \mathsf{H}\mathsf{F}$ and the counit $\nu\colon  \mathsf{F}\mathsf{H} \Rightarrow \mathrm{Id}$ become $G$-natural transformations. We also have that $(\mathsf{F}^G,  \mathsf{H}^G)$ constitutes an adjoint pair, with unit $\upsilon^G$ and counit $\nu^G$.
Finally, if $\mathsf{F}$ is an equivalence, then so is $\mathsf{F}^G$.
\end{lem}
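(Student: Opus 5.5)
Suppose $\mathsf{F}\colon \mathcal{B}\to\mathcal{A}$ is left adjoint to $\mathsf{H}\colon\mathcal{A}\to\mathcal{B}$, with unit $\upsilon\colon \mathrm{Id}\Rightarrow \mathsf{HF}$ and counit $\nu\colon \mathsf{FH}\Rightarrow \mathrm{Id}$. The plan is to transport the $G$-structure across the adjunction via mates. Assume $(\mathsf{F},\sigma)$ is a $G$-functor; the case where $\mathsf{H}$ carries the structure is dual. For each $g\in G$, the functor $\rho_g$ is an auto-equivalence, so it has a quasi-inverse $\rho_{g^{-1}}$, with the equivalence data built from $\theta_{g,g^{-1}}$, $\theta_{g^{-1},g}$ and $\upsilon$. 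Hence $\bar\rho_g\mathsf{F}\dashv \mathsf{H}\bar\rho_g^{-1}$ and $\mathsf{F}\rho_g\dashv \rho_g^{-1}\mathsf{H}$. The isomorphism $\sigma_g\colon \mathsf{F}\rho_g\to\bar\rho_g\mathsf{F}$ between left adjoints then yields a conjugate isomorphism $\mathsf{H}\bar\rho_g^{-1}\to \rho_g^{-1}\mathsf{H}$. Equivalently, and more concretely, I define
\[
\tau_g\colon \mathsf{H}\bar\rho_g \xrightarrow{\ \upsilon\ } \rho_g\mathsf{H}\ldots
\]
directly as the composite
\[
\rho_g\mathsf{H} \xrightarrow{\upsilon\,\rho_g\mathsf{H}} \mathsf{H}\mathsf{F}\rho_g\mathsf{H}\xrightarrow{\mathsf{H}\sigma_g\mathsf{H}} \mathsf{H}\bar\rho_g\mathsf{F}\mathsf{H}\xrightarrow{\mathsf{H}\bar\rho_g\nu}\mathsf{H}\bar\rho_g .
\]
This is the mate of $\sigma_g$, and I set $\sigma^{\mathsf{H}}_g\coloneqq (\tau_g)^{-1}$.

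The first step is to show that $\tau_g$ is invertible. Its candidate inverse is the analogous composite built from $\sigma_g^{-1}$, using the adjunction $\mathsf{F}\rho_g\dashv\rho_g^{-1}\mathsf{H}$ obtained by conjugating with the equivalence $\rho_g$. This reduces to the standard fact that mates of isomorphisms between adjoints are isomorphisms when the horizontal functors are equivalences. The second step is to verify the two axioms of \eqref{G-functor} for $\sigma^{\mathsf{H}}$. Since mate correspondence is functorial with respect to pasting, the cocycle square for $\sigma^{\mathsf F}$ transports to the cocycle square for $\sigma^{\mathsf H}$, and the unit triangle transports similarly. This is a diagram chase using the triangle identities and naturality, together with the $2$-cocycle condition and the naturality of $\theta$ and $\bar\theta$.

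The third step is to check that $\upsilon$ and $\nu$ are $G$-natural, where $\mathsf{HF}$ and $\mathsf{FH}$ carry the composite $G$-structures of Remark~\ref{compositionGfunctors}. For $\nu$, the square \eqref{G-natural} reads $\bar\rho_g\nu\circ\sigma_g\mathsf{H}\circ\mathsf{F}\sigma^{\mathsf H}_g=\nu\bar\rho_g$. Precompose with $\mathsf{F}\tau_g$ and expand $\tau_g$. A triangle identity then collapses $\nu\mathsf{F}\circ\mathsf{F}\upsilon$, and naturality of $\nu$ finishes the check. The verification for $\upsilon$ is symmetric. I expect the bookkeeping in the second and third steps, especially the cocycle compatibility, to be the main obstacle. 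I would first try to avoid explicit chases by invoking the $2$-functoriality of mates in the $2$-category of categories.

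For the equivariant statement, Remark~\ref{gfunctorinducesequivariant} gives functors $\mathsf{F}^G$ and $\mathsf{H}^G$ and transformations $\upsilon^G$ and $\nu^G$ with components $\upsilon_X$ and $\nu_Y$. These are morphisms in the equivariant categories precisely because $\upsilon$ and $\nu$ are $G$-natural. The triangle identities hold componentwise, since they hold in $\mathcal{B}$ and $\mathcal{A}$, so $(\mathsf F^G,\mathsf H^G)$ is an adjoint pair. Finally, suppose $\mathsf{F}$ is an equivalence. Choose $\mathsf{H}$ to be a quasi-inverse, made adjoint with invertible unit and counit. Then $\upsilon^G$ and $\nu^G$ are invertible, because their components are invertible in $\mathcal{B}$ and $\mathcal{A}$ and the inverses automatically respect the linearizations. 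Therefore $\mathsf{F}^G$ is an equivalence.
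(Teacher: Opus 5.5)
Your proposal is correct. The paper gives no proof of its own and simply imports the lemma from Sun, and your argument is the standard one behind that citation: define $\sigma^{\mathsf{H}}_g$ as the inverse of the mate $\tau_g$ of $\sigma_g$, transport the cocycle and unit axioms through the mate correspondence, check $G$-naturality of the counit and unit via the triangle identities, and read off the equivariant adjunction componentwise.

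The steps you leave as sketches do hold. For example, invertibility of $\tau_g$ follows from Yoneda: under the adjunction, $\tau_g\circ -$ becomes $\phi\mapsto \bar\rho_g\nu\circ\sigma_g\mathsf{H}\circ\mathsf{F}\phi$, which is a bijection because $\rho_g$ is essentially surjective and $\bar\rho_g$ is fully faithful.

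Two presentational fixes are needed. Repair the garbled first display that attempts to define $\tau_g$. Also, stop using $\upsilon$ both for the unit of the group action and for the unit of the adjunction.
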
 

\subsection{Skew group rings}
In this subsection we discuss the well-known relationship between the equivariant category of $R$-modules and the category of modules over the skew group ring. Recall the following construction, studied extensively in \cite{RR}.

\begin{defn} \label{def: skew group ring}
Let $R$ be a ring and denote by $\mathsf{Aut}(R)$ the group of ring automorphisms of $R$.
A \emph{left $G$-action on $R$ by ring automorphisms} consists of a group homomopshism $G \to \mathsf{Aut}(R)$, such that  ${^g ({^h r})} = {^{gh}r}$, for all $g,h \in G$ and $r \in R$, where $^gr$ denotes the image of $r$ under the automorphism corresponding to $g$.
The \emph{skew group ring} $R \# G$ is defined to be the free left $R$-module with basis the elements of $G$ (i.e.\ elements are formal sums of the form $\sum_{g \in G} r_g \# g$), and multiplication defined by:
    $$ (r \#g) (r' \#g') = r \, ({^gr'}) \# g g',$$
    for all $r,r' \in R$ and $g,g' \in G$.
\end{defn}

A left $R \# G$-module is exactly a left $R$-module $M$ equipped with a \emph{compatible} left $G$-action, meaning $g \cdot (r m) = {^gr}(g \cdot m)$ for all $g \in G$, $r \in R$, and $m \in M$. For the sequel we use also exponential notation for the $G$-action on $M$, i.e.\ ${^gm} \coloneqq g \cdot m  $.

On the other hand, $G$ induces a strict left action on the category $\Mod R$ in the following way. For a left $R$-module $M$ and $g \in G$, we denote by ${^g \! M}$ the twisted module, which has the same underlying abelian group as $M$ but is equipped with the twisted $R$-action $r \bullet_g m \coloneqq {^{g^{-1}}\!r} m$. Each $g \in G$ induces an exact autoequivalence $\rho_g \colon \Mod R \to \Mod R$ mapping $M \mapsto {^g \! M}$ and $f \mapsto f$. This is a strict left $G$-action on $\Mod R$, meaning all composition isomorphisms $\theta_{g,h}$ are identity natural transformations.

\begin{lem} \label{lem:skew group equivariant iso}
    There is a canonical isomorphism of categories
    $$ \Phi \colon \Mod R\#G \xrightarrow{\cong} (\Mod R)^G $$
    which restricts to an isomorphism $\smod R\#G \xrightarrow{\cong} (\smod R)^G$.
\end{lem}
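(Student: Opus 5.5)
I would construct $\Phi$ and its inverse $\Psi$ explicitly and then check that both are functors and mutually inverse. The key is to turn the $G$-action on an $R\#G$-module into linearization maps for the strict action on $\Mod R$.

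\textbf{Definition of $\Phi$.} For a left $R\#G$-module $M$, let $\Phi(M) = (M,\chi)$, where $M$ is regarded as an $R$-module through $R \to R\#G$, $r \mapsto r\#1_G$. Define $\chi_g \colon M \to {^g\!M}$ by $\chi_g(m) = g^{-1}\cdot m$.

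\begin{itemize}
\item \emph{$\chi_g$ is $R$-linear.} In ${^g\!M}$ we have $r\bullet_g \chi_g(m) = {^{g^{-1}}r}\,(g^{-1}m)$. By the compatibility $g^{-1}\cdot(rm) = {^{g^{-1}}r}\,(g^{-1}m)$, this equals $\chi_g(rm)$.
\item \emph{$\chi_g$ is an isomorphism.} Its inverse is $m \mapsto g\cdot m$.
\item \emph{Cocycle condition.} Since the action is strict, $\theta_{g,h} = \mathsf{Id}$ and $^g(\chi_h) = \chi_h$ as a map of sets. The condition therefore reads $\chi_{gh} = \chi_h\chi_g$, i.e.\ $(gh)^{-1}m = h^{-1}(g^{-1}m)$, which holds.
\end{itemize}

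This choice of inverse is forced by the left-action convention; I would double-check the direction here. The alternative convention $\chi_g(m) = g\cdot m$ would produce the twist by ${^{g^{-1}}}$ instead.

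\textbf{Action on morphisms.} On morphisms, $\Phi(f) = f$. An $R\#G$-linear map is the same thing as an $R$-linear map commuting with the $G$-action, which is exactly the condition $\psi_g f = f\chi_g$. So $\Phi$ is a functor, and it is fully faithful on hom-sets by construction.

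\textbf{The inverse $\Psi$.} Given $(X,\chi)$, define $g\cdot x \coloneqq \chi_{g^{-1}}(x)$. Then:
\begin{itemize}
\item $1_G$ acts trivially, because the cocycle condition gives $\chi_{1}=\chi_1\chi_1$ and $\chi_1$ is invertible, so $\chi_1 = \mathsf{Id}$.
\item Associativity of the action follows from $\chi_{gh} = \chi_h\chi_g$.
\item Compatibility $g\cdot(rx) = {^g r}(g\cdot x)$ is the $R$-linearity of $\chi_{g^{-1}}\colon X \to {^{g^{-1}}\!X}$ read off in the twisted structure.
\end{itemize}
Together these give a module over $R\#G$ via $(r\#g)x = r(g\cdot x)$. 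The multiplication rule of Definition~\ref{def: skew group ring} is exactly what the compatibility guarantees. The composites $\Phi\Psi$ and $\Psi\Phi$ are literally identities on objects and morphisms, so $\Phi$ is an isomorphism of categories.

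\textbf{Restriction to finitely presented modules.} Since $G$ is finite, $R\#G$ is a finitely generated free left $R$-module with basis $\{1\#g\}$. I would argue as follows.
\begin{itemize}
\item If $M$ is finitely presented over $R\#G$, it is the cokernel of a map $(R\#G)^m \to (R\#G)^n$. Restricting to $R$ gives a presentation by free $R$-modules $R^{m|G|} \to R^{n|G|}$, so $M$ is finitely presented over $R$.
\item Conversely, let $X$ be finitely presented over $R$. Finite generation transfers immediately, since $R$-generators also generate over $R\#G$.
\item For finite presentation over $R\#G$, use the surjection $R\#G\otimes_R X \to X$. The module $R\#G\otimes_R X \cong \bigoplus_g {^g\!X}$ is finitely presented over $R\#G$. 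Its kernel is finitely generated over $R$, because $\bigoplus_g {^g\!X}$ is finitely presented over $R$ and $X$ is finitely presented. Hence the kernel is finitely generated over $R\#G$, and $X$ is finitely presented over $R\#G$.
\end{itemize}

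\textbf{Main obstacle.} I expect the main difficulty to be bookkeeping of conventions: the inverse $g^{-1}$ in the twisted action, and the direction of $\chi_g$. The finite-presentation transfer is the only non-formal step, and I would handle it exactly as in the list above.
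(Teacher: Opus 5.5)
Your proposal is correct and uses the same explicit functor as the paper, namely $\chi_g(m) = {^{g^{-1}}m}$. The paper states the lemma as standard and gives no verification. Your checks supply the routine details: $R$-linearity into the twisted module, the cocycle identity $\chi_{gh}=\chi_h\chi_g$ for the strict action, the explicit inverse, and the transfer of finite presentation along the finite free extension $R \to R\#G$.
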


Explicitly, the functor $\Phi$ maps an $R\#G$-module $M$ to the equivariant object $(M, \chi)$, where the linearization $\chi_g \colon M \xrightarrow{\simeq} {^g\!M}$ is defined by $\chi_g(m) = {^ {g^{-1}}m}$.

\subsection{Forgetful and induction functors}

We recall the ambidextrous adjunction $(\mathsf{Ind}, \mathsf{For}, \mathsf{Ind})$ and some of its important properties. The \emph{forgetful functor} $\mathsf{For} \colon  \mathcal{B}^G \rightarrow \mathcal{B}$ is defined by $\mathsf{For}(X, \chi) = X$ and the \emph{induction functor} $\mathsf{Ind} \colon  \mathcal{A} \rightarrow \mathcal{A}^G$ is defined by $\mathsf{Ind}(X) = \big(\bigoplus_{h \in G} {^h \! X}, \phi \big)$, where the linearization $\phi_g\colon  \bigoplus_{h\in G} {^h \! X} \rightarrow \bigoplus_{h \in G} {^g ({^h \! X})}$ is the collection of isomorphisms $\theta_{g,h}^{-1} \colon {^{gh} \! X} \rightarrow {^g ( {^h \! X} )}$. For a proof of this bi-adjunction, see \cite[Lemma~3.8]{Elagin}.

Given a $G$-functor $(\mathsf{F},\sigma)\colon (\mathcal{A},G) \rightarrow (\mathcal{B},G)$, its induced equivariant functor $\mathsf{F}^G\colon \mathcal{A}^G \rightarrow \mathcal{B}^G$ strictly commutes with the forgetful functor, i.e.\ $\mathsf{For} \circ \mathsf{F}^G = \mathsf{F} \circ \mathsf{For}$.
The equivariant functor also commutes with the induction functor up to the natural isomorphism $\oplus \sigma_g \colon  \mathsf{F}^G \circ \mathsf{Ind} \xrightarrow{\simeq} \mathsf{Ind} \circ \mathsf{F}$.

An object $X$ of $\mathcal{B}$ is always a summand of $\mathsf{For}(\mathsf{Ind}(X)) = \oplus_{g \in G} {^g \! X}$. When $|G|$ is invertible we have that any linearized object $(X,\chi)$ is also a summand of $\mathsf{Ind}(\mathsf{For}(X,\chi)) = \mathsf{Ind}(X)$ provided that $\mathcal{B}$ is idempotent complete; see for example \cite[Lemma~2.14(3)]{ChaoSun}. 
We will take advantage of the fact that both $\mathsf{Ind}$ and $\mathsf{For}$ preserve projective and injective objects.

\subsection{Projective and injective dimensions in equivariant categories}\label{sec: projectives injectives}

Assume now that $\mathcal{B}$ is abelian. Note that $\Proj \mathcal{B}$ and $\Inj \mathcal{B}$ are both $G$-invariant subcategories.
Using the forgetful-induction ambidextrous adjunction, we can easily prove that $\mathcal{B}$ has enough projectives (resp.\ injectives) if and only if $\mathcal{B}^G$ has enough projectives (resp.\ injectives).
Moreover, if $|G|$ is invertible in $\mathcal{B}$, then $\Proj \mathcal{B}^G = (\Proj \mathcal{B})^G$ and $\Inj \mathcal{B}^G = (\Inj \mathcal{B})^G$, see \cite[Lemma~3.12]{ChaoSun}. 



The following lemma, showing that projective dimension is preserved and reflected under equivariantization, is important for the study of singularity categories of cleft extensions. A version of it is used in the proof of \cite[Theorem~1.3]{RR}.

\begin{lem}\label{lem:pd_equiv}
Let $\mathcal{B}$ be an abelian category with enough projectives together with a $G$-action. 
Then:
\begin{itemize}
\item[(i)] For any object $X \in \mathcal{B}$, we have $\pd_{\mathcal{B}^G}\mathsf{Ind}(X) = \pd_{\mathcal{B}}X$.
\item[(ii)] For any object $(X, \chi) \in \mathcal{B}^G$, we have $ \pd_{\mathcal{B}}X \leq \pd_{\mathcal{B}^G}(X, \chi)$. Assuming that $|G|$ is invertible in $\mathcal{B}$, we have that $ \pd_{\mathcal{B}}X = \pd_{\mathcal{B}^G}(X, \chi)$.
\end{itemize}
Dual statements hold for injective dimensions of objects, assuming $\mathcal{B}$ has enough injectives.
\end{lem}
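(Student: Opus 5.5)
We compare projective resolutions in $\mathcal{B}$ and in $\mathcal{B}^G$ via the exact functors $\mathsf{For}$ and $\mathsf{Ind}$, using the ambidextrous adjunction $(\mathsf{Ind},\mathsf{For},\mathsf{Ind})$. Both functors are exact, since exactness in $\mathcal{B}^G$ is detected in $\mathcal{B}$ and $\mathsf{Ind}$ is a finite direct sum of the exact autoequivalences $\rho_h$. Both preserve projectives, being exact left adjoints of exact functors. Consequently they send a projective resolution of length $n$ to a projective resolution of length $n$. This gives the inequalities
\[
\pd_{\mathcal{B}^G}\mathsf{Ind}(X)\le \pd_{\mathcal{B}}X \quad\text{and}\quad \pd_{\mathcal{B}}X \le \pd_{\mathcal{B}^G}(X,\chi).
\]
The first comes from applying $\mathsf{Ind}$ to a resolution of $X$. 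The second comes from applying $\mathsf{For}$ to a resolution of $(X,\chi)$, since $\mathsf{For}(X,\chi)=X$. This already proves the first half of (ii).

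For the reverse inequality in (i), we use that $X$ is a direct summand of $\mathsf{For}\,\mathsf{Ind}(X)=\bigoplus_{h\in G}{^hX}$. By the inequality just proved,
\[
\pd_{\mathcal{B}}X\le \pd_{\mathcal{B}}\Big(\bigoplus_h {^hX}\Big)\le \pd_{\mathcal{B}^G}\mathsf{Ind}(X).
\]
Here the first step holds because projective dimension of a summand is at most that of the sum, which we check via $\mathrm{Ext}$ vanishing. Alternatively, $\pd {^hX}=\pd X$ because $\rho_h$ is an exact autoequivalence preserving projectives, so the sum has dimension exactly $\pd X$. Either way (i) follows.

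For the equality in (ii) under invertibility of $|G|$, we use that $(X,\chi)$ is a direct summand of $\mathsf{Ind}\,\mathsf{For}(X,\chi)=\mathsf{Ind}(X)$. This is the one step needing care. The cited \cite[Lemma~2.14(3)]{ChaoSun} assumes idempotent completeness, but here the splitting is explicit. The composite of unit and counit of the adjunctions $(X,\chi)\to\mathsf{Ind}X\to(X,\chi)$ is $|G|$ times the identity, so we divide by $|G|$ to obtain a retraction. No idempotent completion is needed, since $\mathcal{B}$ is abelian and hence idempotent complete anyway. Then (i) gives
\[
\pd_{\mathcal{B}^G}(X,\chi)\le \pd_{\mathcal{B}^G}\mathsf{Ind}(X)=\pd_{\mathcal{B}}X.
\]

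To handle summands cleanly, we use the characterization $\pd Y\le n$ if and only if $\mathrm{Ext}^{n+1}(Y,-)=0$, valid with enough projectives; $\mathcal{B}^G$ has enough projectives by \S\ref{sec: projectives injectives}. Since $\mathrm{Ext}$ is additive, the dimension of a summand is bounded by that of the whole. The dual statements for injective dimensions follow by the same argument. In that case $\mathsf{Ind}$ and $\mathsf{For}$ preserve injectives as exact right adjoints, we use resolutions by injectives, and we use $\mathrm{Ext}^{n+1}(-,Y)$.
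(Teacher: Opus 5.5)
Your proposal is correct and follows the paper's proof essentially step by step. You apply the exact, projective-preserving functors $\mathsf{Ind}$ and $\mathsf{For}$ to projective resolutions, use that $X$ is a summand of $\mathsf{For}\,\mathsf{Ind}(X)=\bigoplus_{h\in G}{^hX}$ for the reverse inequality in (i), and use that $(X,\chi)$ is a retract of $\mathsf{Ind}(X)$ when $|G|$ is invertible for (ii). The only differences are cosmetic: you make the splitting explicit via the unit--counit composite being $|G|\cdot\mathrm{id}$ instead of citing \cite[Lemma~2.14(3)]{ChaoSun}, and you handle summands through $\mathrm{Ext}$-vanishing rather than the formula $\pd(\bigoplus_i X_i)=\sup_i \pd X_i$.
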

\begin{proof}
Observe that for any $X$ in $\mathcal{B}$ we have that $\pd_\mathcal{B} X = \pd_\mathcal{B} {^g \! X}$, which yields the equality $ \pd_\mathcal{B} \oplus_{g \in G} {^g \! X} = \pd_\mathcal{B} X$. Indeed, $\pd_\mathcal{B} (\oplus_{g \in G} {^g \! X}) \leq \pd_\mathcal{B} X$ because the (finite) direct sum of projective resolutions which all have the same length is a projective resolution of the same length. For the converse, $\pd_\mathcal{B} X \leq \pd_\mathcal{B} \oplus_{g \in G} {^g \! X}$ is immediate from the following well known equality $\pd (\oplus_{i \in I} X_i ) = \sup_{i \in I} \{ \pd(X_i) \}$, where $I$ is any set.

(i) Applying the exact functor $\mathsf{Ind}$ to a projective resolution of $X$ yields a projective resolution of $\mathsf{Ind}(X)$ in $\mathcal{B}^G$ of length $\pd_\mathcal{B} X$, which shows that $\pd_{\mathcal{B}^G}(\mathsf{Ind}(X)) \leq \pd_{\mathcal{B}}(X)$. Conversely, applying the forgetful functor to a projective resolution of $\mathsf{Ind}(X)$ yields a projective resolution of $\mathsf{For}(\mathsf{Ind}(X)) = \oplus_{g \in G} {^g \! X}$ in $\mathcal{B}$. By the above observation, this implies that $\pd_\mathcal{B} X \leq \pd_{\mathcal{B}^G} \mathsf{Ind}(X)$.

(ii) Assume now that $X$ admits a linearization $\chi$, i.e.\ $(X,\chi) \in \mathcal{B}$. Applying $\mathsf{For}$ to a projective resolution of $(X,\chi)$ yields a projective resolution of $X$, hence $\pd_\mathcal{B} X \leq \pd_{\mathcal{B}^G} (X,\chi)$.
Assuming that $|G|$ is invertible in $\mathcal{B}$, then $(X,\chi)$ is a summand of $\mathsf{Ind}(X)$. Thus, we have that $\pd_{\mathcal{B}^G} (X,\chi) \leq \pd_{\mathcal{B}^G} \mathsf{Ind}(X) = \pd_\mathcal{B} X$.
\end{proof}

In fact, this lemma is interesting on its own. For example we have the following.

\begin{defn}
    Let $\mathcal{B}$ be an abelian category with enough projectives and enough injectives. We say that $\mathcal{B}$ is a \emph{Gorenstein} if $\mathsf{sup} \{ \pd_{\mathcal{B}}(I)  : I \in \Inj \mathcal{B} \} < \infty$ and $\mathsf{sup} \{ \id_{\mathcal{B}}(P) : P \in  \Proj \mathcal{B} \} < \infty$.
\end{defn}

\begin{prop} \label{prop:Gorenstein_equiv}
    Let $\mathcal{B}$ be an abelian category with enough projectives and enough injectives. Assume a finite group $G$ acts on $\mathcal{B}$ such that $|G|$ is invertible in $\mathcal{B}$. Then $\mathcal{B}$ is Gorenstein if and only if $\mathcal{B}^G$ is Gorenstein.
\end{prop}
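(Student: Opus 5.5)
The plan is to compare the two suprema in the definition of Gorenstein directly, using Lemma~\ref{lem:pd_equiv} together with the identification of projective and injective objects in $\mathcal{B}^G$ recalled in Subsection~\ref{sec: projectives injectives}. By that subsection, $\mathcal{B}^G$ has enough projectives and enough injectives because $\mathcal{B}$ does. Since $|G|$ is invertible, \cite[Lemma~3.12]{ChaoSun} gives $\Proj \mathcal{B}^G = (\Proj \mathcal{B})^G$ and $\Inj \mathcal{B}^G = (\Inj \mathcal{B})^G$. Concretely, an object $(X,\chi)$ is projective (resp.\ injective) in $\mathcal{B}^G$ exactly when $X$ is projective (resp.\ injective) in $\mathcal{B}$.

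For the direction ``$\mathcal{B}$ Gorenstein $\Rightarrow$ $\mathcal{B}^G$ Gorenstein'', I take an injective $(I,\chi)$ in $\mathcal{B}^G$. Then $I \in \Inj \mathcal{B}$. By Lemma~\ref{lem:pd_equiv}(ii), which uses invertibility of $|G|$, we have $\pd_{\mathcal{B}^G}(I,\chi) = \pd_{\mathcal{B}} I \le \sup\{\pd_{\mathcal{B}} J : J \in \Inj\mathcal{B}\}$. So the first supremum for $\mathcal{B}^G$ is bounded by that for $\mathcal{B}$. Dually, for a projective $(P,\chi)$, the injective-dimension version of Lemma~\ref{lem:pd_equiv}(ii) gives $\id_{\mathcal{B}^G}(P,\chi) = \id_{\mathcal{B}} P$, which bounds the second supremum.

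For the converse, I take $I \in \Inj \mathcal{B}$. Since $\mathsf{Ind}$ preserves injectives, $\mathsf{Ind}(I) \in \Inj \mathcal{B}^G$. Lemma~\ref{lem:pd_equiv}(i) gives $\pd_{\mathcal{B}} I = \pd_{\mathcal{B}^G}\mathsf{Ind}(I)$, which is bounded by the supremum over $\Inj\mathcal{B}^G$. Dually, for $P \in \Proj\mathcal{B}$, $\mathsf{Ind}(P)$ is projective and $\id_{\mathcal{B}}P = \id_{\mathcal{B}^G}\mathsf{Ind}(P)$. This direction does not need $|G|$ invertible, only that $\mathsf{Ind}$ preserves projectives and injectives.

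The main point requiring care is the use of Lemma~\ref{lem:pd_equiv}(ii). Its proof relies on $(X,\chi)$ being a summand of $\mathsf{Ind}(X)$, which was stated under idempotent completeness. This is automatic here, since $\mathcal{B}$ is abelian. I also need to check that the dual statements for injective dimension genuinely hold as asserted in Lemma~\ref{lem:pd_equiv}. They follow by the same argument applied to injective coresolutions, using that $\mathsf{Ind}$ and $\mathsf{For}$ are exact and preserve injectives.
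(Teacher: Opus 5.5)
Your proposal is correct and follows the paper's proof: the paper shows that the two pairs of suprema coincide by applying Lemma~\ref{lem:pd_equiv} and its dual. You spell out the two inequalities explicitly, using part (ii) together with $\Inj\mathcal{B}^G=(\Inj\mathcal{B})^G$ in one direction and part (i) with $\mathsf{Ind}$ in the other; your remark that the direction from $\mathcal{B}^G$ to $\mathcal{B}$ needs no invertibility of $|G|$ matches Remark~\ref{rem:modular_gorenstein}.
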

\begin{proof}
Using Lemma~\ref{lem:pd_equiv}, we have that $\mathsf{sup} \{ \pd_{\mathcal{B}}(I)  : I \in \Inj \mathcal{B} \} = \mathsf{sup} \{ \pd_{\mathcal{B}^G}(I,\i)  : (I,\i) \in \Inj \mathcal{B}^G \} $ and, by its dual, $\mathsf{sup} \{ \id_{\mathcal{B}}(P) : P \in  \Proj \mathcal{B} \} = \mathsf{sup} \{ \id_{\mathcal{B}^G}(P,\pi) : (P,\pi) \in  \Proj \mathcal{B}^G \}$.
\end{proof}

\begin{rem} \label{rem:modular_gorenstein}
    
    One direction of Proposition~\ref{prop:Gorenstein_equiv} holds regardless of $|G|$ being invertible in $\mathcal{B}$: if the equivariant category $\mathcal{B}^G$ is Gorenstein, then the underlying category $\mathcal{B}$ is also Gorenstein, because Lemma~\ref{lem:pd_equiv} stills provides the necessary bounds for the injective and projective dimensions.
    
\end{rem}

\section{$G$-Equivariant Lifts} \label{sec: equivariant lifts}
In this section we establish sufficient conditions under which a cleft extension of abelian categories lifts to a $G$-equivariant cleft extension. We also study skew group rings of $\theta$-extensions.

\subsection{$G$-equivariant cleft extensions}
Consider a cleft extension $(\mathcal{B},\mathcal{A},\mathsf{i},\mathsf{e},\mathsf{l})$ and a finite group $G$ acting on both $\mathcal{B}$ and $\mathcal{A}$. If the functors $\mathsf{i}$ and $\mathsf{e}$  are compatible with the group action, the entire structure of the cleft extension lifts harmoniously to the equivariant setting. We formalize this directly:

\begin{defn} \label{def: G-lift of cleft}
    A \emph{$G$-equivariant cleft extension} consists of a cleft extension of abelian categories $(\mathcal{B},\mathcal{A},\mathsf{i},\mathsf{e},\mathsf{l})$ equipped with left $G$-actions on $\mathcal{B}$ and $\mathcal{A}$, such that $\mathsf{i}$ and $\mathsf{e}$ are $G$-functors, and the natural isomorphism $\xi\colon \mathsf{e}\mathsf{i} \xrightarrow{\simeq} \mathsf{Id}_\mathcal{B}$ is a $G$-natural transformation.
\end{defn}



\begin{prop} \label{prop:equivariant_cleft_properties}
    Let $(\mathcal{B},\mathcal{A},\mathsf{i},\mathsf{e},\mathsf{l})$ be a $G$-equivariant cleft extension. Then the following hold:
    \begin{itemize} 
        \item[\textnormal{(i)}] The functors $\mathsf{l}$, $\mathsf{q}$, $\mathsf{G}$, and $\mathsf{F}$ canonically inherit the structure of $G$-functors.
        \item[\textnormal{(ii)}] The induced equivariant categories and functors form a cleft extension $(\mathcal{B}^G,\mathcal{A}^G,\mathsf{i}^G,\mathsf{e}^G,\mathsf{l}^G)$ which is part of the following commutative diagram of cleft extensions:
        
        \[
        \begin{tikzcd}
        \mathcal{B}^G \arrow[rr, "\mathsf{i}^G"'] \arrow[dd, "\mathsf{For}"'] &  & \mathcal{A}^G \arrow[rr, "\mathsf{e}^G"'] \arrow[dd, "\mathsf{For}"'] \arrow[ll, "\mathsf{q}^G"', bend right] \arrow["\mathsf{G}^G"', loop, distance=2em, in=125, out=55] &  & \mathcal{B}^G \arrow[ll, "\mathsf{l}^G"', bend right, shift right]  \arrow[dd, "\mathsf{For}"'] \arrow["\mathsf{F}^G"', loop, distance=2em, in=125, out=55]   \\
         &  & &  & \\
        \mathcal{B} \arrow[rr, "\mathsf{i}"'] &  & \mathcal{A} \arrow[rr, "\mathsf{e}"'] \arrow[ll, "\mathsf{q}"', bend right]   \arrow["\mathsf{G}"', loop, distance=2em, in=305, out=235]   &  & \mathcal{B} \arrow[ll, "\mathsf{l}"', bend right] \arrow["\mathsf{F}"', loop, distance=2em, in=305, out=235]     
        \end{tikzcd}
        \]
    \end{itemize}
\end{prop}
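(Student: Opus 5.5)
For (i), I will use Lemma~\ref{adjointequiv} repeatedly. Since $(\mathsf{l},\mathsf{e})$ is an adjoint pair and $\mathsf{e}$ is a $G$-functor, $\mathsf{l}$ acquires a canonical $G$-functor structure for which the unit $\delta$ and the counit $\varepsilon$ are $G$-natural. Likewise, $(\mathsf{q},\mathsf{i})$ is an adjoint pair \cite[Proposition~2.3]{beligiannis} and $\mathsf{i}$ is a $G$-functor, so $\mathsf{q}$ becomes a $G$-functor. By Remark~\ref{compositionGfunctors}, $\mathsf{l}\mathsf{e}$ is a $G$-functor, and $\varepsilon\colon \mathsf{l}\mathsf{e}\Rightarrow\mathsf{Id}_\mathcal{A}$ is $G$-natural. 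Lemma~\ref{lem:ker_cok_G_functor} then makes $\mathsf{G}=\Ker(\varepsilon)$ a $G$-functor with $\mathsf{G}^G\simeq \Ker(\varepsilon^G)$. Finally, $\mathsf{F}=\mathsf{e}\mathsf{G}\mathsf{i}$ is a composite of $G$-functors and hence a $G$-functor, again by Remark~\ref{compositionGfunctors}. One point needs care: the $G$-functor structure on $\mathsf{l}\mathsf{e}$ obtained by composition must be the one with respect to which $\varepsilon$ is $G$-natural. This is exactly what Lemma~\ref{adjointequiv} asserts once the composite structure is used.

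For (ii), I will check the three axioms of Definition~\ref{cleft of abelian} for $(\mathcal{B}^G,\mathcal{A}^G,\mathsf{i}^G,\mathsf{e}^G,\mathsf{l}^G)$. The categories $\mathcal{A}^G$ and $\mathcal{B}^G$ are abelian. The functor $\mathsf{e}^G$ is exact and faithful by Remark~\ref{gfunctorinducesequivariant}; alternatively, exactness can be detected after applying $\mathsf{For}$, and $\mathsf{For}\circ\mathsf{e}^G=\mathsf{e}\circ\mathsf{For}$. Lemma~\ref{adjointequiv} gives that $(\mathsf{l}^G,\mathsf{e}^G)$ is an adjoint pair with unit $\delta^G$ and counit $\varepsilon^G$. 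Since $\xi\colon\mathsf{e}\mathsf{i}\to\mathsf{Id}_\mathcal{B}$ is $G$-natural by Definition~\ref{def: G-lift of cleft}, it induces $\xi^G\colon (\mathsf{e}\mathsf{i})^G=\mathsf{e}^G\mathsf{i}^G\Rightarrow \mathsf{Id}_\mathcal{B}^G=\mathsf{Id}_{\mathcal{B}^G}$, using Remarks~\ref{compositionGfunctors} and~\ref{inclusionfunctor}. Its components are the isomorphisms $\xi_X$, so $\xi^G$ is a natural isomorphism.

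It remains to identify the auxiliary functors of the new cleft extension with the equivariantized ones. By uniqueness of adjoints, the left adjoint of $\mathsf{i}^G$ is $\mathsf{q}^G$, since $(\mathsf{q}^G,\mathsf{i}^G)$ is adjoint by Lemma~\ref{adjointequiv}. The kernel of $\varepsilon^G$ is $\mathsf{G}^G$ by Lemma~\ref{lem:ker_cok_G_functor}. The induced endofunctor of $\mathcal{B}^G$ is $\mathsf{e}^G\mathsf{G}^G\mathsf{i}^G=(\mathsf{e}\mathsf{G}\mathsf{i})^G=\mathsf{F}^G$.

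Commutativity of the diagram with the forgetful functors follows from the identity $\mathsf{For}\circ\mathsf{H}^G=\mathsf{H}\circ\mathsf{For}$, which holds strictly for every $G$-functor $\mathsf{H}$. I expect the main obstacle to be bookkeeping rather than anything conceptual. One has to confirm that the canonical structures produced by Lemma~\ref{adjointequiv} are compatible with composition and kernels, so that the equivariant $\mathsf{q}$, $\mathsf{G}$ and $\mathsf{F}$ agree with the functors intrinsically attached to the equivariant cleft extension. I would handle this by checking naturality squares componentwise, since all equivariant natural transformations have the same components as the underlying ones.
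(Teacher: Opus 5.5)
Your proposal is correct and follows the paper's proof essentially step by step. You use Lemma~\ref{adjointequiv} for $\mathsf{l}$ and $\mathsf{q}$, composition together with Lemma~\ref{lem:ker_cok_G_functor} (applied to the $G$-natural counit $\varepsilon$) for $\mathsf{G}$ and $\mathsf{F}$, and then $\xi^G$, the equivariant adjunctions and $\mathsf{For}\circ\mathsf{H}^G=\mathsf{H}\circ\mathsf{For}$ for (ii). Where you differ, you are slightly more thorough than the paper: you explicitly check that $\mathsf{e}^G$ is faithful exact, and that $\mathsf{q}^G$, $\mathsf{G}^G$, $\mathsf{F}^G$ coincide with the functors intrinsically attached to the equivariant cleft extension.
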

\begin{proof}
    For (i), since $\mathsf{e}$ and $\mathsf{i}$ are $G$-functors, their adjoints $\mathsf{l}$ and $\mathsf{q}$, respectively, admit canonical $G$-functor structures, by Lemma~\ref{adjointequiv}. Because compositions and kernels of $G$-functors yield $G$-functors, by Remark~\ref{compositionGfunctors} and Lemma~\ref{lem:ker_cok_G_functor}, the functors $\mathsf{G}$ and $\mathsf{F} \simeq \mathsf{e}\mathsf{G}\mathsf{i}$ inherit $G$-functor structures. 
    
    For (ii), by Remark~\ref{gfunctorinducesequivariant}, the $G$-natural isomorphism $\xi \colon \mathsf{e}\mathsf{i} \xrightarrow{\simeq} \mathsf{Id}_\mathcal{B}$ induces an equivariant natural isomorphism $\xi^G \colon (\mathsf{e}\mathsf{i})^G \xrightarrow{\simeq} \mathsf{Id}_{\mathcal{B}}^G$. Since $(\mathsf{e}\mathsf{i})^G = \mathsf{e}^G \mathsf{i}^G$ (Remark~\ref{compositionGfunctors}) and $\mathsf{Id}_\mathcal{B}^G = \mathsf{Id}_{\mathcal{B}^G}$ (Remark~\ref{inclusionfunctor}), we obtain the isomorphism $\mathsf{e}^G \mathsf{i}^G \simeq \mathsf{Id}_{\mathcal{B}^G}$. The adjoint pairs $(\mathsf{l}^G, \mathsf{e}^G)$ and $(\mathsf{q}^G, \mathsf{i}^G)$ are guaranteed by Lemma~\ref{adjointequiv}, confirming that the equivariant categories and functors form a cleft extension. The (strict) commutativity with $\mathsf{For}$ is a property of equivariant functors.
\end{proof}

\begin{rem}
Given a $G$-equivariant cleft extension we can show that the induced natural transformation $\eta \colon \mathsf{F}^2 \Rightarrow \mathsf{F}$ defining the associated $\eta$-extension (recall Remark~\ref{rem: eta extensions}) is a $G$-natural transformation. That is, any $G$-equivariant cleft extension is equivalent to a \emph{$G$-equivariant $\eta$-extension}.

The importance of $\eta$-extensions lies in that we can describe the extension and its properties in a manner intrinsic to the category $\mathcal{B}$. For example, we can show that  given a $G$-equivariant $\eta$-extension $\mathcal{B} \ltimes_{\eta}\mathsf{F}$, then there is a canonical $G$-action on $\mathcal{B}\ltimes_{\eta}\mathsf{F}$ and a canonical isomorphism of categories:
    \[
    (\mathcal{B} \ltimes_{\eta}\mathsf{F})^G \cong \mathcal{B}^G \ltimes_{\eta^G} \mathsf{F}^G
    \]
given by $((X,u), \chi) \mapsto ((X, \chi), u)$ on objects and $f \mapsto f$ on morphisms.  
\end{rem}


\subsection{$G$-equivariant $\theta$-extensions}
Recall that we use the notation ${^g \star}$ for the image of an automorphism $g \in G$ of an element $\star$ that belongs either in a ring or a module. If $G$ acts on rings $\Lambda$ and $\Gamma$ by ring automorphisms, a ring homomorphism $f \colon \Lambda \to \Gamma$ is called \emph{$G$-equivariant} if ${^g (f(\lambda))} = f({^g \lambda})$ for all $\lambda \in \Lambda$ and $g \in G$.

\begin{defn} \label{def: G-equivariant theta extensions}
    Assume that a finite group $G$ acts on a ring $\Gamma$ by ring automorphisms. We say that a $\Gamma$-bimodule $M$ is a \emph{$G$-compatible bimodule} if it admits a $G$-action such that $^g(\g m \g') = {^g\g} \, {^gm} \, {^g\g'}$ for all $\g, \g' \in \Gamma$, $m \in M$, and $g \in G$. 
    A $\Gamma$-bimodule homomorphism $\theta\colon M\otimes_{\Gamma}M\rightarrow M$ is a \emph{$G$-equivariant homomorphism} if ${^g \big(\theta (m \otimes m')\big)} = \theta({^gm} \otimes {^gm'})$ for all $m, m' \in M$ and $g \in G$.
    A \emph{$G$-equivariant $\theta$-extension} consists of a $\theta$-extension $\Gamma\ltimes_{\theta}M$ where $M$ is a $G$-compatible bimodule and $\theta$ is $G$-equivariant.  
\end{defn}

\begin{lem} \label{transfer of action for theta extensions}
Let $\Lambda=\Gamma \! \ltimes_\theta \! M$ be a $G$-equivariant $\theta$-extension where $G$ acts on $\Gamma$ by ring automorphisms.
Then $G$ acts on $\Lambda$ by ring automorphisms via:
\[
\tau_g(\g,m)=({^g\g},{^g m}).
\]
Moreover, the canonical maps $i \colon \Lambda\to\Gamma$ and $e \colon \Gamma\to \Lambda$ are $G$-equivariant ring homomorphisms.
\end{lem}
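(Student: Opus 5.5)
The plan is a direct verification from the definitions; there is no structural obstruction. First I will check that each $\tau_g$ is a ring automorphism of $\Lambda=\Gamma\ltimes_\theta M$. Additivity is immediate, because $g$ acts additively on both $\Gamma$ and $M$. For the unit, $\tau_g(1,0)=({^g 1},0)=(1,0)$, since $g$ acts on $\Gamma$ by ring automorphisms.

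For multiplicativity, I compute both sides:
\[
\tau_g\big((\g,m)(\g',m')\big)=\big({^g(\g\g')},\,{^g(\g m')}+{^g(m\g')}+{^g\theta(m\otimes m')}\big),
\]
\[
\tau_g(\g,m)\,\tau_g(\g',m')=\big({^g\g}\,{^g\g'},\,{^g\g}\,{^gm'}+{^gm}\,{^g\g'}+\theta({^gm}\otimes{^gm'})\big).
\]
The two expressions agree term by term, using three facts:
\begin{itemize}
\item $G$ acts on $\Gamma$ by ring automorphisms;
\item $M$ is $G$-compatible, taking $\g'=1$ or $\g=1$ in ${^g(\g m\g')}={^g\g}\,{^gm}\,{^g\g'}$ to split the two mixed terms;
\item $\theta$ is $G$-equivariant.
\end{itemize}

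Next I will verify that $g\mapsto\tau_g$ is a group homomorphism $G\to\mathsf{Aut}(\Lambda)$. We have $\tau_g\tau_h(\g,m)=({^g({^h\g})},{^g({^hm})})=({^{gh}\g},{^{gh}m})=\tau_{gh}(\g,m)$. This uses that both the action on $\Gamma$ and the action on $M$ are genuine left actions, so that ${^g({^h m})}={^{gh}m}$; I read this as part of "$M$ admits a $G$-action". Also $\tau_{1_G}=\mathsf{Id}$. Bijectivity of $\tau_g$ then follows, since $\tau_{g^{-1}}$ is its inverse.

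Finally, $G$-equivariance of the canonical maps is immediate:
\[
i(\tau_g(\g,m))={^g\g}={^g\, i(\g,m)},\qquad \tau_g(e(\g))=({^g\g},{^g0})=({^g\g},0)=e({^g\g}).
\]
Here ${^g0}=0$ because the action on $M$ is additive.

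The only point requiring care is the multiplicativity step, specifically the $\theta$ term and the splitting of the bimodule compatibility condition. Everything else is routine.
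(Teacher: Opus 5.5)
Your proposal is correct and follows the paper's proof essentially step by step. Both check additivity and the unit, verify multiplicativity term by term using the ring-automorphism action on $\Gamma$, the $G$-compatibility of $M$ and the $G$-equivariance of $\theta$, then note $\tau_{gh}=\tau_g\circ\tau_h$ with $\tau_{g^{-1}}$ as inverse, and observe that equivariance of $i$ and $e$ is immediate; your remark that the compatibility condition splits the mixed terms by setting $\gamma=1$ or $\gamma'=1$ (using ${}^g1=1$) makes explicit a detail the paper leaves implicit.
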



\begin{proof}
Each map $\tau_g$ is obviously additive and preserves the unit. For multiplicativity, let $(\g,m),(\g',m')$ be in $ \Lambda$. We have:
\begin{align*}
\tau_g\big((\g,m)(\g',m')\big) &= \tau_g(\g\g', \g m'+m\g'+\theta(m\otimes m')) \\
&= ({^g(\g\g')}, {^g(\g m'+m\g'+\theta(m\otimes m'))}) \\
&= ({^g\g} \, {^g\g'}, {^g\g} \, {^gm'} + {^gm} \, {^g\g'} + \theta({^gm} \otimes {^gm'})) \\
&= ({^g\g},{^g m})({^g\g'},{^g m'}) \\
&= \tau_g(\g, m) \tau_g(\g',m').
\end{align*}
Hence, $\tau_g$ is a ring homomorphism and, because $\tau_{g^{-1}}$ is its inverse (it is an easy check), it is an automorphism of $\Lambda$. Observe also that the equality $\tau_{gh} = \tau_g \comp \tau_{h}$ holds. The equivariance of the inclusion $e$ and projection $i$ is immediate.
\end{proof}

For a $G$-compatible $\Gamma$-bimodule $M$, the \emph{induced bimodule} (or skew group module) $M \# G$ is defined as the abelian group consisting of formal finite sums $\sum_{g\in G} m_g\# g$, where $m_g \in M$. We endow $M \# G$ with the structure of a $\Gamma \# G$-bimodule on the homogenous elements as follows:
\[ 
(\g \# g) (m \# h) (\g' \# g') = \g({^g m}) (^{gh} \g') \# ghg' 
\]
for all $\g, \g' \in \Gamma$, $m \in M$, and $g, h, g' \in G$. These actions are then extended bi-additively to all finite sums.
For a $G$-equivariant $\Gamma$-bimodule homorphism $\theta \colon M \otimes_\Gamma M \rightarrow M$, we define the \emph{induced homomorphism} $\widehat{\theta} \colon M\# G \otimes_{\Gamma \# M} M \# G \to M \# G$ by 
\[
\widehat{\theta}\big((m\# g)\otimes (m'\# h)\big) = \theta(m\otimes {}^g m')\# gh.
\]
for all $m,m' \in M$ and $g,h \in G$, and extend it linearly to all finite sums. The verification that $\widehat{\theta}$ is an associative (provided that $\theta$ is associative) bimodule homomorphism is routine.
Hence a $G$-equivariant $\theta$-extension gives rise to the $\widehat{\theta}$-extenion of $\Gamma \#G $ by $M \# G$.
Note that if $\theta=0$, then $\widehat{\theta}=0$; that is, if the $\theta$-extension is trivial, then so is the corresponding $\widehat{\theta}$-extension.
The following implies the first part of the \textbf{Main Theorem}~\ref{main theorem} and generalizes \cite[Proposition~3.10]{ChenWang}; see Example~\ref{ex: tensor rings} below.

\begin{thm}\label{thm: main A}
Let $\Lambda=\Gamma\ltimes_{\theta}M$ be a $G$-equivariant $\theta$-extension where $G$ acts on $\Gamma$ by ring automorphisms. Then there is an isomorphism of rings
\[
\Lambda\#G \cong (\Gamma\#G)\ltimes_{\widehat{\theta}}(M\#G).
\]
\end{thm}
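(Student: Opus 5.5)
We construct the isomorphism explicitly on additive groups and then check that it is multiplicative. By Lemma~\ref{transfer of action for theta extensions}, $G$ acts on $\Lambda$ through the automorphisms $\tau_g$, so $\Lambda\#G$ makes sense. Its elements are finite sums $\sum_g (\g_g,m_g)\#g$. The ring $(\Gamma\#G)\ltimes_{\widehat{\theta}}(M\#G)$ has underlying group $(\Gamma\#G)\oplus(M\#G)$. Define
\[
\Psi\colon \Lambda\#G \to (\Gamma\#G)\ltimes_{\widehat{\theta}}(M\#G),\qquad (\g,m)\#g \mapsto (\g\#g,\, m\#g),
\]
and extend additively. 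Since $\Lambda\#G$ is free as a left $\Lambda$-module on $G$, and $\Lambda=\Gamma\oplus M$ as groups, both sides are identified with $\bigoplus_{g\in G}(\Gamma\oplus M)$. Hence $\Psi$ is a bijective additive map. It also sends the unit $(1,0)\#1_G$ to $(1\#1_G,0)$.

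The main step is multiplicativity on homogeneous elements. In $\Lambda\#G$ we have
\[
\big((\g,m)\#g\big)\big((\g',m')\#h\big)=(\g,m)\,\tau_g(\g',m')\#gh = \big(\g\,{^g\g'},\ \g\,{^gm'}+m\,{^g\g'}+\theta(m\otimes {^gm'})\big)\#gh .
\]
In the $\widehat{\theta}$-extension we have
\[
(\g\#g,\,m\#g)(\g'\#h,\,m'\#h)=\big((\g\#g)(\g'\#h),\ (\g\#g)(m'\#h)+(m\#g)(\g'\#h)+\widehat{\theta}((m\#g)\otimes(m'\#h))\big).
\]
We evaluate the three summands using the bimodule formulas for $M\#G$, taking the neutral element where needed. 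First, $(\g\#g)(m'\#h)=\g\,{^gm'}\#gh$. Second, $(m\#g)(\g'\#h)=(1\#1_G)(m\#g)(\g'\#h)=m\,{^g\g'}\#gh$. Third, $\widehat{\theta}((m\#g)\otimes(m'\#h))=\theta(m\otimes{^gm'})\#gh$. Comparing with the product in $\Lambda\#G$, the two expressions agree term by term. By biadditivity, $\Psi$ is therefore a ring isomorphism.

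The real obstacle is not this computation but well-definedness of the target ring. We need $M\#G$ to be a genuine $\Gamma\#G$-bimodule, and $\widehat{\theta}$ to be a well-defined, balanced over $\Gamma\#G$, associative bimodule map. The paper calls these checks routine, but they are what make the right-hand side a ring, so we would verify them explicitly.

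One way around doing this by hand is to transport structure. Since $\Psi$ is an additive bijection, define the ring structure on the target via $\Psi$. Then use the computation above, together with its special cases where one factor lies in $\Gamma\#G$ or in $M\#G$, to see that this transported structure is exactly the one defining $(\Gamma\#G)\ltimes_{\widehat{\theta}}(M\#G)$. Associativity of $\Lambda\#G$ then yields the bimodule axioms and the associativity of $\widehat{\theta}$ for free. The same calculation with $m=0$ or $\g=0$ shows that $\widehat{\theta}$ is $\Gamma\#G$-balanced, because $\Lambda\#G$ is associative.
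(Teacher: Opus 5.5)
Your proposal is correct and follows the paper's proof essentially verbatim. It uses the same map $\Psi((\gamma,m)\#g)=(\gamma\#g,\,m\#g)$, the same bijectivity and unit checks, and the same comparison of products on homogeneous elements $(\gamma,m)\#g$. Your extra transport-of-structure remark is also valid: associativity of $\Lambda\#G$ forces the $\Gamma\#G$-bimodule axioms for $M\#G$ and the balancedness, bilinearity and associativity of $\widehat{\theta}$. This fills in the verification that the paper dismisses as routine.
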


\begin{proof}
By Lemma \ref{transfer of action for theta extensions}, $G$ acts by ring automorphisms on $\Lambda$, and hence we can consider the skew group ring $\Lambda \# G$. We claim that as an abelian group, the skew group ring over a direct sum splits:
\[
\Lambda\#G=(\Gamma\oplus M)\#G\cong (\Gamma\#G)\oplus (M\#G).
\]
where the map is given by $\Psi((\g, m)\#g) = (\g\#g, m\#g)$ and extended to finite sums, which guarantees it is a group homomorphism. To see that $\Psi$ is surjective, note that an element in $(\Gamma\#G)\oplus (M\#G)$ is of the form $\big(\sum_{g\in G} \g_g\# g, \sum_{g\in G} m_g\# g\big)$ which is precisely the image of $\sum_{g\in G} (\g_g, m_g)\# g$. For injectivity, if $\Psi\big(\sum_{g\in G} (\g_g, m_g)\# g\big) = (\sum_{g \in G}0\#g ,\sum_{g\in G }0\#g)$ 
then $\g_g = 0$ and $m_g = 0$ for all $g \in G$.

We claim that $\Psi$ is a ring isomorphism $\Lambda\#G \to (\Gamma\#G)\ltimes_{\widehat{\theta}}(M\#G)$. 
Note that it preserves the unit since $\Psi((1_\Gamma, 0)\# 1_G) = (1_\Gamma \# 1_G, 0 \# 1_G)$, which is the identity in $(\Gamma\#G)\ltimes_{\widehat{\theta}}(M\#G)$. It remains to show that $\Psi$ preserves multiplication.

Taking the product of two homogeneous elements in $\Lambda\#G$, we have:
\begin{align*}
    \big((\g,m)\# g\big) \big((\g',m')\# h\big) & = (\g,m)\,{}\tau_g(\g',m')\# gh \\ 
    &= (\g, m)(^g\g', ^gm')\#gh \\
    &=(\g\,{}^g\g', \ \g\,{}^g m' + m\,{}^g\g' +\theta(m\otimes{}^g m'))\# gh.
\end{align*}

Applying $\Psi$ to this product yields:
\[
\Psi\Big(\big((\g,m)\# g\big) \big((\g',m')\# h\big)\Big) =\big(\g\,{}^g\g'\#gh, \ (\g\,{}^g m' + m\,{}^g\g' +\theta(m\otimes{}^g m'))\#gh \big).
\]
On the other hand, we have that
\begin{align*}
    \Psi((\g,m)\#g) \Psi((\g',m')\#h) & = (\g\#g, m\#g) (\g'\#h, m'\#h) \\
    & = \big( (\g\#g)(\g'\#h), \ (\g\#g)(m'\#h) + (m\#g)(\g'\#h) + \widehat{\theta}((m\# g)\otimes (m'\# h)) \big) \\
    & = \big( \g{^g\g'}\#gh, \ \g{^gm'}\#gh + m{^g\g'}\#gh + \theta(m\otimes {}^g m')\# gh \big) \\
    & = \big(\g\,{}^g\g'\#gh, \ (\g\,{}^g m' + m\,{}^g\g' +\theta(m\otimes{}^g m'))\#gh \big). \qedhere
\end{align*}
\end{proof}

Recall the canonical isomorphism $\Phi_R \colon \Mod R\#G \xrightarrow{\simeq} (\Mod R)^G$ for any ring $R$ equipped with a $G$-action (Lemma~\ref{lem:skew group equivariant iso}).

\begin{cor} \label{cor:lifting_theta_to_cleft}
Let $\Lambda=\Gamma\ltimes_{\theta}M$ be a $G$-equivariant $\theta$-extension where $G$ acts on $\Gamma$ by ring automorphisms. Then the cleft extension of module categories induced by the $\theta$-extension $\Lambda = \Gamma  \ltimes_{\theta}  M$ lifts to a $G$-equivariant cleft extension. Furthermore, this equivariant cleft extension corresponds exactly to the cleft extension of module categories induced by the skew group ring $\Lambda\#G$. In particular, we have the following commutative diagram
\[
\begin{tikzcd}
\Mod \Gamma\#G \arrow[rr, "\mathsf{i}\#G"] \arrow[dd, "\Phi_\Gamma"] &  & \Mod \Lambda \# G \arrow[dd, "\Phi_\Lambda"] \arrow[rr, "\mathsf{e}\#G"] \arrow[ll, "\mathsf{q}\#G =  \Gamma \# G\otimes_{\Lambda \# G}-"', bend right] &  & \Mod \Gamma\#G \arrow[ll, "\mathsf{l}\#G= \Lambda \# G \otimes_{\Gamma \# G} -"', bend right] \arrow[dd, "\Phi_\Gamma"] \\
\\
(\Mod \Gamma)^G \arrow[rr, "\mathsf{i}^G"] &  & (\Mod \Lambda)^G \arrow[rr, "\mathsf{e}^G"] \arrow[ll, "\mathsf{q}^G"', bend right] &  & (\Mod \Gamma)^G \arrow[ll, "\mathsf{l}^G"', bend right] 
\end{tikzcd}
\]
\end{cor}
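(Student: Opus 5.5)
The plan has three steps. First, equip the restriction functors $\mathsf{i}\colon \Mod \Gamma \to \Mod \Lambda$ and $\mathsf{e}\colon \Mod \Lambda \to \Mod \Gamma$ with $G$-functor structures. Second, check that the resulting data form a $G$-equivariant cleft extension in the sense of Definition~\ref{def: G-lift of cleft}. Third, compare the equivariant cleft extension with the one coming from the ring cleft extension $\Gamma\#G \to \Lambda\#G \to \Gamma\#G$ furnished by Theorem~\ref{thm: main A}.

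\textbf{Step 1: the $G$-functor structures.} By Lemma~\ref{transfer of action for theta extensions}, $G$ acts on $\Lambda$ by the automorphisms $\tau_g$, and the maps $i$ and $e$ are $G$-equivariant ring homomorphisms. Hence $G$ acts strictly on both $\Mod \Lambda$ and $\Mod \Gamma$ by twisting.

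Restriction along an equivariant ring map commutes strictly with twisting. For instance, for a $\Gamma$-module $X$, the $\Lambda$-module $\mathsf{i}({^g X})$ has action $(\g,m)\cdot x = {^{g^{-1}}\g}\,x$. On the other hand, ${^g(\mathsf{i}X)}$ has action $\tau_{g^{-1}}(\g,m)\cdot x = i(\tau_{g^{-1}}(\g,m))x = {^{g^{-1}}\g}\,x$. These agree.

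So $\mathsf{i}$ and $\mathsf{e}$ are strict $G$-functors with $\sigma=\mathsf{Id}$, and the diagrams \eqref{G-functor} commute trivially. Moreover, since $i\circ e=\mathsf{Id}_\Gamma$ on the nose, $\mathsf{e}\mathsf{i}=\mathsf{Id}$, so we may take $\xi$ to be the identity. The identity is trivially $G$-natural, as \eqref{G-natural} shows. This gives the $G$-equivariant cleft extension, and Proposition~\ref{prop:equivariant_cleft_properties} then supplies $\mathsf{l}^G$ and $\mathsf{q}^G$ together with the equivariant cleft extension $(\mathsf{i}^G,\mathsf{e}^G,\mathsf{l}^G)$.

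\textbf{Step 2: the top row.} By Theorem~\ref{thm: main A}, $\Lambda\#G\cong (\Gamma\#G)\ltimes_{\widehat\theta}(M\#G)$. This ring therefore comes with ring maps $i\#G\colon \Lambda\#G\to\Gamma\#G$, $(\g,m)\#g\mapsto \g\#g$, and $e\#G\colon \Gamma\#G\to\Lambda\#G$, $\g\#g\mapsto (\g,0)\#g$. These are precisely the maps induced by the equivariant maps $i$ and $e$.

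Restriction and induction along these maps give the top row of the diagram, with $\mathsf{l}\#G=\Lambda\#G\otimes_{\Gamma\#G}-$ and $\mathsf{q}\#G=\Gamma\#G\otimes_{\Lambda\#G}-$.

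\textbf{Step 3: commutativity of the squares.} The key point is that the restriction squares commute strictly. Take a $\Lambda\#G$-module $N$. Then $\Phi_\Gamma(\mathsf{e}\#G(N))$ and $\mathsf{e}^G(\Phi_\Lambda N)$ have the same underlying $\Gamma$-module, namely $N$ restricted along $e$. They also carry the same linearization $\chi_g(n)={^{g^{-1}}n}$: the $G$-action on the restricted module is $({1}\#g)\cdot n$ in both cases, because $e\#G$ sends $1\#g$ to $(1,0)\#g$, the unit of $\Lambda$ tensored with $g$. The identical argument handles the $\mathsf{i}$-square.

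For the curved arrows, the $\Phi$'s are isomorphisms of categories, so the squares for $\mathsf{l}$ and $\mathsf{q}$ follow by uniqueness of adjoints. Indeed, $\Phi_\Lambda\circ \mathsf{l}\#G\circ\Phi_\Gamma^{-1}$ is left adjoint to $\Phi_\Gamma\circ\mathsf{e}\#G\circ\Phi_\Lambda^{-1}=\mathsf{e}^G$. Since $\mathsf{l}^G$ is also left adjoint to $\mathsf{e}^G$ by Lemma~\ref{adjointequiv}, the two are naturally isomorphic. The same argument applies to $\mathsf{q}$ versus $\mathsf{q}^G$.

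Hence the diagram commutes: strictly for $\mathsf{i}$ and $\mathsf{e}$, and up to canonical isomorphism for $\mathsf{l}$ and $\mathsf{q}$.

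The main obstacle I expect is bookkeeping the twisting conventions. One must check that the twist ${^gX}$ uses $g^{-1}$ consistently on $\Lambda$ and on $\Gamma$, and that $\Phi$'s linearization $\chi_g(n)={^{g^{-1}}n}$ is compatible with restriction. This verification is what makes the squares commute on the nose; the rest is formal.
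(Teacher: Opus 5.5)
Your proposal is correct and follows the paper's proof. Both arguments take the same route: strict $G$-functor structures on $\mathsf{i},\mathsf{e}$ from the equivariance of $i,e$ (Lemma~\ref{transfer of action for theta extensions}), $\xi=\mathsf{Id}$ as a trivially $G$-natural isomorphism, Proposition~\ref{prop:equivariant_cleft_properties} for the bottom row, and Theorem~\ref{thm: main A} for the top row. Your Step~3 spells out the ``simple verification'' that the paper omits; there, the uniqueness-of-adjoints argument correctly shows that the $\mathsf{i}$- and $\mathsf{e}$-squares commute strictly and the $\mathsf{l}$- and $\mathsf{q}$-squares commute up to natural isomorphism.
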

\begin{proof}
    The upper cleft extension in the above diagram is the cleft extension induced by the $\widehat{\theta}$-extension of $\Gamma \# G$ by $M\#G$ and its isomorphism with $\Lambda \# G$ of Theorem~\ref{thm: main A}.

    By Lemma \ref{transfer of action for theta extensions}, $G$ acts on both $\Gamma$ and $\Lambda$ by ring automorphisms, hence induces strict $G$-actions on $\Mod \Gamma$ and $\Mod \Lambda$ respectively. Because the ring homomorphisms $i \colon \Lambda \to \Gamma$ and $e \colon \Gamma \to \Lambda$ are $G$-equivariant, their induced restriction functors $\mathsf{i}$ and $\mathsf{e}$ are strict $G$-functors. Note that $\mathsf{e}\mathsf{i} \simeq \mathsf{Id}$ is a $G$-natural transformation since both are strict $G$-functors and their composition is the identity. Therefore, the cleft extension of module categories lifts to a $G$-equivariant cleft extension by Proposition~\ref{prop:equivariant_cleft_properties} giving us the lower part of the diagram.

    It is a simple verification that $\Phi$ commutes with the functors of the cleft extensions. Therefore, the equivariant cleft extension of module categories over $\Gamma$ and $\Lambda$ is equivalent to the cleft extension of module categories over $\Gamma\#G$ and $\Lambda\#G$.  \qedhere
    
\end{proof}

\begin{rem}
    We can define a \emph{$G$-equivariant cleft extension of rings} consisting of a $G$-action by ring automorphisms on $\Gamma$ and $\Lambda$ such that the ring maps $\iota \colon \Lambda \to \Gamma$ and $e \colon \Gamma \to \Lambda$ are $G$-equivariant. In Lemma~\ref{transfer of action for theta extensions} we show that a $G$-equivariant $\theta$-extension of rings, induces a $G$-equivariant cleft extension of rings. One can prove that the converse also holds.
\end{rem}

\begin{exmp} \label{ex: tensor rings}
Let $M$ be a $\Gamma$-bimodule. Consider the tensor ring:
\[
T_\Gamma(M) = \Gamma \oplus M \oplus M^{\otimes 2} \oplus \dots \oplus M^{\otimes n}\oplus \ldots.
\]
We view $T_\Gamma(M)$ as a $\theta$-extension of $\Gamma$ in the following way. Let $N = \bigoplus_{i=1}^{\infty} M^{\otimes i}$ denote the positive-degree part of the tensor ring. The truncated tensor product induces an associative $\Gamma$-bimodule homomorphism $\theta \colon N \otimes_\Gamma N \to N$, defined canonically by the isomorphism $M^{\otimes i} \otimes M^{\otimes j } \xrightarrow[]{} M^{\otimes i+j} $.  Therefore, the tensor ring $T_\Gamma(M)$ is exactly the $\theta$-extension $\Gamma \ltimes_\theta N$.

Assume now that $G$ acts on $\Gamma$ by ring automorphisms and $M$ is $G$-compatible. Then $M ^{\otimes i}$, for $i \geq 1$, is also $G$-compatible simply by extending the $G$-action diagonally:
\[ {^g (m_1 \otimes \ldots \otimes m_i)} \coloneqq {^g m_1} \otimes \ldots \otimes {^g m_i} \]
In this manner, $N$ is $G$-compatible and $\theta$ defined above becomes $G$-equivariant. Hence, the $\theta$-extension $\Gamma \ltimes_\theta N = T_\Gamma(M)$ is $G$-equivariant. By Theorem~\ref{thm: main A} and the fact that $T_{\Gamma\#G}(M\# G)$ is the $\widehat{\theta}$-extension of $\Gamma \#G$ by $N\# G$ we obtain the isomorphism $T_\Gamma(M) \# G \cong T_{\Gamma\#G}(M\# G)$; see also \cite[Proposition~3.10]{ChenWang}.
\end{exmp}


\section{Equivariant Derived Categories and Derived Functors}
\label{sec: equivariant derived cats}

In this section we investigate the structure of derived categories in the equivariant setting, with a particular focus on the behavior of left derived functors of right exact $G$-functors. We refer the reader to \cite{XiaoWuChen2, BeckOber, Elagin, kkp, ChaoSun} for more on this subject.

\subsection{Action on Derived Categories and the Comparison Functor}\label{actiononderivedcats}

If $\mathcal{B}$ is an abelian category, a left $G$-action on $\mathcal{B}$ extends to an \emph{admissible} action (meaning that $G$ acts by triangle auto-equivalences via the exactness of the auto-equivalences $\rho_g$) on the homotopy category $\mathsf{K}^? (\mathcal{B})$, its subcategory of acyclic complexes $\mathsf{K}^?_{ac} (\mathcal{B})$ and, on their quotient, the derived category $\mathsf{D}^?(\mathcal{B})$ by \cite[Theorem~3.14]{ChaoSun}. The action on chain complexes is computed component-wise:
\begin{equation}\label{actiononD^b}
\rho_g(\cdots \xrightarrow{} X^n \xrightarrow[]{\delta^n} X^{n+1} \xrightarrow[]{} \cdots) = \ \ \cdots \xrightarrow{} {^g \! X^n} \xrightarrow{{^g \! \delta^n}} {^g \! X^{n+1}} \xrightarrow{} \cdots 
\end{equation}
We write $ {^g(X^{\bullet})} \coloneqq \rho_g(X^{\bullet})$ for this action. 

It is natural to consider the equivariant homotopy and derived categories, $\mathsf{K}^?(\mathcal{B})^G$ and $\mathsf{D}^?(\mathcal{B})^G$, respectively. By \cite[Examples 3.19 and 3.20]{ChaoSun}, whenever $|G|$ is invertible in $\mathcal{B}$, these equivariant categories are canonically triangulated, that is, distinguished triangles in the equivariant categories are those that map via the forgetful functor to distinguished triangles in the underlying categories. Furthermore, there exists a natural equivalence up to retracts, called \emph{comparison functor}:
\[
\mathsf{K}_{\mathcal{B}} \colon  \mathsf{K}^?(\mathcal{B}^G) \xrightarrow{} \mathsf{K}^?(\mathcal{B})^G,
\]
which maps a complex of equivariant objects $(X,\chi)^{\bullet}$ to the equivariant complex $(X^{\bullet}, \chi^{\bullet})$. The comparison functor descends to the derived categories yielding the following commutative diagram 
\begin{equation}\label{eq: commutative quotient functors}
\begin{tikzcd}
    \mathsf{K}^?(\mathcal{B}^G) \arrow[r, "\mathsf{K}_{\mathcal{B}}"] \arrow[d, "\mathsf{Q}_{\mathcal{B}^G}"']  & \mathsf{K}^?(\mathcal{B})^G \arrow[d, "\mathsf{Q}_{\mathcal{B}}^G"]\\
    \mathsf{D}^?(\mathcal{B}^G) \arrow[r, "\mathsf{K}_{\mathcal{B}}"] & \mathsf{D}^?(\mathcal{B})^G
\end{tikzcd}    
\end{equation}
where $\mathsf{Q}_{\mathcal{B}^G}$ denotes the quotient functor on $\mathcal{B}^G$ and $\mathsf{Q}_\mathcal{B}^G$ denotes the equivariant quotient functor (see \cite[Remark~2.25]{kkp} for the canonical $G$-functor structure of the quotient functor $\mathsf{Q}_\mathcal{B}$).

Note that on the level of bounded derived categories, the comparison functor is an equivalence since $\mathsf{D^b}(\mathcal{B})$ is idempotent complete by \cite[Corollary~2.10]{balmer_schlichting}. Since the auto-equivalences $\rho_g \colon \mathcal{B} \to \mathcal{B}$ preserve (complexes of) projectives, we infer that $\mathsf{K^b}(\Proj \mathcal{B})$ is a $G$-invariant full subcategory of $\mathsf{D^b}(\mathcal{B})$. The comparison functor restricts to an equivalence:
\begin{equation}\label{eq:comparison_projectives}
\mathsf{K}_{\mathcal{B}} \colon \mathsf{K^b}(\Proj \mathcal{B}^G) \xrightarrow{\simeq} \mathsf{K^b}(\Proj \mathcal{B})^G.
\end{equation}
This is in general an equivalence up to retracts but in this case, because $\Proj \mathcal{B}^G = (\Proj \mathcal{B})^G$ is an idempotent complete silting subcategory of $\mathsf{K^b}(\Proj \mathcal{B}^G)$, the latter is idempotent complete by \cite[Theorem~2.9]{IyamaYang}, and thus the comparison functor is an honest equivalence.

\subsection{Equivariant Left Derived Functors}

We now examine the left derived functor of a right exact $G$-functor. Let $\mathsf{H}\colon \mathcal{B} \to \mathcal{A}$ be a right exact $G$-functor between abelian categories, and assume $\mathcal{B}$ has enough projectives (resp.\ injectives). For the left derived functor we write $\mathbb{L}_n\mathsf{H} = 0$ for $n$ \emph{large enough} if there exists some $N$ such that $\mathbb{L}_n\mathsf{H}(X^\bullet) =0$ for all $n > N$ and $X^\bullet$ in $\mathsf{D^b}(\mathcal{B})$. The following proposition asserts that the left (resp.\ right) derived functor $\mathbb{L}(\mathsf{H}^G) \colon \mathsf{D^b}(\mathcal{B}^G) \to \mathsf{D^b}(\mathcal{A}^G)$ (resp.\ $\mathbb{R}(\mathsf{H}^G)$) exists.

\begin{prop}\label{finitenessoflocaldimension}
\textnormal{(\!\!\cite[Proposition~5.8(ii)]{kkp})}
Let $\mathcal{A}$ and $\mathcal{B}$ be abelian categories where $\mathcal{B}$ has enough projectives. Assume a finite group $G$ acts on both $\mathcal{A}$ and $\mathcal{B}$ such that $|G|$ is invertible in both categories. Given a right exact $G$-functor $\mathsf{H}\colon  \mathcal{B}\to\mathcal{A} $, 
then $\mathbb{L}_n\mathsf{H} = 0$ for $n$ large enough if and only if $\mathbb{L}_n\mathsf{H}^G=0$ for $n$ large enough. The dual statement for right derived functors of left exact functors also holds.
\end{prop}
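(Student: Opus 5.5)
The plan is to compute the left derived functors via projective resolutions on both sides and to compare them using $\mathsf{For}$ and $\mathsf{Ind}$.

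\textbf{Step 1: projective resolutions.} By Section~\ref{sec: projectives injectives}, $\mathcal{B}^G$ has enough projectives. Since $|G|$ is invertible, $\Proj \mathcal{B}^G = (\Proj \mathcal{B})^G$, so $\mathsf{For}$ sends projective resolutions in $\mathcal{B}^G$ to projective resolutions in $\mathcal{B}$. Because $\mathsf{For}\circ \mathsf{H}^G = \mathsf{H}\circ\mathsf{For}$ strictly, and $\mathsf{For}$ is exact and faithful (so it reflects vanishing of cohomology), we obtain for every complex $(X,\chi)^\bullet$ in $\mathsf{D^b}(\mathcal{B}^G)$
\[
\mathsf{For}\big(\mathbb{L}_n\mathsf{H}^G((X,\chi)^\bullet)\big)\cong \mathbb{L}_n\mathsf{H}(\mathsf{For}(X,\chi)^\bullet).
\]
This is first checked on objects and then on bounded complexes, using a Cartan--Eilenberg or $\mathsf{K}^-(\Proj)$-resolution together with the fact that $\mathsf{For}$ commutes with totalization.

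\textbf{Step 2: the forward implication.} Assume $\mathbb{L}_n\mathsf{H}=0$ for all $n>N$ on $\mathsf{D^b}(\mathcal{B})$. By Step 1, $\mathbb{L}_n\mathsf{H}^G$ vanishes after applying $\mathsf{For}$, and faithfulness of $\mathsf{For}$ on objects gives $\mathbb{L}_n\mathsf{H}^G=0$ for $n>N$. This direction does not even use the invertibility of $|G|$, beyond the identification of projectives.

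\textbf{Step 3: the converse.} Assume $\mathbb{L}_n\mathsf{H}^G=0$ for $n>N$. Given $X^\bullet\in\mathsf{D^b}(\mathcal{B})$, apply the exact functor $\mathsf{Ind}$ to obtain $\mathsf{Ind}(X^\bullet)\in \mathsf{D^b}(\mathcal{B}^G)$. Then, by Step 1,
\[
\mathsf{For}\,\mathbb{L}_n\mathsf{H}^G(\mathsf{Ind} X^\bullet)\cong \mathbb{L}_n\mathsf{H}\Big(\bigoplus_{g} {^g X^\bullet}\Big)\cong \bigoplus_g \mathbb{L}_n\mathsf{H}({^gX^\bullet}).
\]
The left-hand side vanishes, so the summand $\mathbb{L}_n\mathsf{H}(X^\bullet)$, corresponding to $g=1_G$ up to the unit isomorphism $\upsilon$, vanishes as well.

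\textbf{Expected obstacle.} The only delicate point is making Step 1 rigorous at the level of complexes rather than objects. One must check that the derived functor on $\mathsf{D^b}(\mathcal{B}^G)$, computed via $\mathsf{K}^-(\Proj\mathcal{B}^G)$, is compatible with $\mathsf{For}$. I expect to handle this by noting that $\mathsf{For}$ induces a triangle functor $\mathsf{K}^-(\Proj \mathcal{B}^G)\to\mathsf{K}^-(\Proj\mathcal{B})$ that preserves quasi-isomorphisms, and that it commutes with $\mathsf{H}$ applied termwise. The dual statement then follows by the same argument with injective resolutions and $\mathbb{R}$.
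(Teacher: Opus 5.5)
Your proof is correct. The paper gives no proof of its own here; it quotes the statement from \cite[Proposition~5.8(ii)]{kkp}, so there is no in-text argument to compare against. Your route is the natural self-contained one. It is exactly the $\mathsf{For}$/$\mathsf{Ind}$ mechanism the paper uses later in the proof of Theorem~\ref{thm: singularity_vanishing}: push forward along $\mathsf{For}$ for one implication, and for the other test on $\mathsf{Ind}(X)$ and split off the summand $X\cong{^{1_G}X}$ of $\mathsf{For}\,\mathsf{Ind}(X)$.

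Two points sharpen what you wrote.

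\textbf{Invertibility of $|G|$ is not needed.} It is used nowhere in your argument, in either direction.
\begin{itemize}
\item Step~1 only needs $\mathsf{For}(\Proj\mathcal{B}^G)\subseteq\Proj\mathcal{B}$. This holds for any finite $G$, because $\mathsf{For}$ is left adjoint to the exact functor $\mathsf{Ind}$.
\item Step~3 only needs that $X$ is a direct summand of $\bigoplus_{g}{^gX}$, which is also unconditional.
\end{itemize}
The equality $\Proj\mathcal{B}^G=(\Proj\mathcal{B})^G$ is where invertibility enters, and you never use the reverse inclusion. So your argument proves both implications without that hypothesis. That is slightly more than the paper relies on in Remark~\ref{rem: converse without |G|}.

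\textbf{The ``expected obstacle'' is not a real one.} The vanishing condition is meaningfully an objectwise one. Read literally, with a uniform $N$ for all bounded complexes, it would force $\mathsf{H}=0$, by shifting. For an object $(X,\chi)$, the isomorphism $\mathsf{For}\,\mathbb{L}_n\mathsf{H}^G(X,\chi)\cong\mathbb{L}_n\mathsf{H}(X)$ follows directly by applying $\mathsf{For}$ to a projective resolution in $\mathcal{B}^G$. This uses only $\mathsf{For}\circ\mathsf{H}^G=\mathsf{H}\circ\mathsf{For}$ and the fact that kernels and cokernels in $\mathcal{A}^G$ are computed in $\mathcal{A}$. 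Your complex-level version via $\mathsf{K}^-(\Proj\mathcal{B}^G)\to\mathsf{K}^-(\Proj\mathcal{B})$ is also fine. Cartan--Eilenberg resolutions and totalization are unnecessary.
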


Now we turn our attention to the $G$-functor structure of the derived functor. We generalize a couple results shown for exact functors in \cite[Section~5.1]{kkp} to left derived functors of right exact functors (and, dually, right derived functors of left exact functors).

\begin{lem} \label{lem: LF_is_G_functor}
    Let $\mathcal{A}$ and $\mathcal{B}$ be a abelian categories where $\mathcal{B}$ has enough projectives. Assume a $G$-action on both $\mathcal{A}$ and $\mathcal{B}$ such that $|G|$ is invertible in both categories and let $\mathsf{H}\colon  \mathcal{B}\to\mathcal{A} $ be a right exact $G$-functor.
    Then its left derived functor $\mathbb{L}\mathsf{H} \colon \mathsf{D^-}(\mathcal{B}) \to \mathsf{D^-}(\mathcal{A})$ is canonically a $G$-functor.
    If  $\mathbb{L}_n\mathsf{H} =0$ for large enough $n$, then the latter restricts to a $G$-functor on the bounded derived categories.
    The dual statements for the right derived functors also hold.
\end{lem}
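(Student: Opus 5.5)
The plan is to realize $\mathbb{L}\mathsf{H}$ concretely through projective resolutions and check the $G$-functor axioms at the level of homotopy categories. Since $\mathcal{B}$ has enough projectives, the composite $\mathsf{K}^-(\Proj \mathcal{B}) \hookrightarrow \mathsf{K}^-(\mathcal{B}) \to \mathsf{D}^-(\mathcal{B})$ is a triangle equivalence. Choose a quasi-inverse $\mathsf{p}\colon \mathsf{D}^-(\mathcal{B}) \to \mathsf{K}^-(\Proj \mathcal{B})$, given by projective resolutions, and write
\[
\mathbb{L}\mathsf{H} = \mathsf{Q}_\mathcal{A} \circ \mathsf{K}^-(\mathsf{H}) \circ \mathsf{p},
\]
where $\mathsf{K}^-(\mathsf{H})$ applies $\mathsf{H}$ componentwise. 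We then equip each of the three factors with a $G$-functor structure and conclude with Remark~\ref{compositionGfunctors}.

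\textbf{Step 1 (the functor $\mathsf{K}^-(\mathsf{H})$).} The $G$-actions on $\mathsf{K}^-(\mathcal{B})$ and $\mathsf{K}^-(\mathcal{A})$ are componentwise, by \eqref{actiononD^b}. Applying the isomorphisms $\sigma_g\colon \mathsf{H}\rho_g \to \bar\rho_g \mathsf{H}$ degreewise gives isomorphisms of complexes $\mathsf{K}^-(\mathsf{H})\rho_g \to \bar\rho_g \mathsf{K}^-(\mathsf{H})$. Naturality of $\sigma_g$ makes these commute with the differentials. The two diagrams in \eqref{G-functor} then hold degreewise because they hold for $(\mathsf{H},\sigma)$.

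\textbf{Step 2 (the subcategory and the quotient functors).} Each $\rho_g$ preserves projectives, so $\mathsf{K}^-(\Proj \mathcal{B})$ is a $G$-invariant subcategory of $\mathsf{K}^-(\mathcal{B})$. Remark~\ref{inclusionfunctor} therefore makes the inclusion a strict $G$-functor. The quotient functors $\mathsf{Q}_\mathcal{B}$ and $\mathsf{Q}_\mathcal{A}$ are $G$-functors, in fact strict, by \cite[Remark~2.25]{kkp}. Hence the equivalence $\mathsf{Q}_\mathcal{B}|_{\mathsf{K}^-(\Proj \mathcal{B})}$ is a $G$-functor by Remark~\ref{compositionGfunctors}. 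Its quasi-inverse $\mathsf{p}$ is a left and right adjoint of it, so Lemma~\ref{adjointequiv} endows $\mathsf{p}$ with a canonical $G$-functor structure. Composing the three pieces gives the $G$-structure on $\mathbb{L}\mathsf{H}$.

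\textbf{Step 3 (canonicity).} Any two choices of resolution functor $\mathsf{p}$ are related by a unique natural isomorphism compatible with the augmentations to the identity. The $G$-structures produced by Lemma~\ref{adjointequiv} are compatible with the units and counits, so this comparison isomorphism is a $G$-natural transformation. The resulting $G$-functor structure on $\mathbb{L}\mathsf{H}$ is therefore independent of choices up to $G$-natural isomorphism. The main obstacle is exactly this bookkeeping: showing that the $G$-structure on $\mathsf{p}$ transported through the adjunction agrees with the naive one, namely that ${^g}$ of a projective resolution of $X$ is a projective resolution of ${^g X}$, so that the cocycle and unit diagrams in \eqref{G-functor} hold. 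I would check this by verifying that both structures are compatible with the counit $\mathsf{Q}_\mathcal{B}\mathsf{p} \simeq \mathsf{Id}$. Such a structure is uniquely determined, because $\mathsf{Q}_\mathcal{B}|_{\mathsf{K}^-(\Proj \mathcal{B})}$ is fully faithful.

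\textbf{Bounded case.} Assume $\mathbb{L}_n\mathsf{H} = 0$ for $n$ large. Then $\mathbb{L}\mathsf{H}$ sends $\mathsf{D^b}(\mathcal{B})$ into $\mathsf{D^b}(\mathcal{A})$; this is the content behind Proposition~\ref{finitenessoflocaldimension}, via a standard truncation argument. The subcategories $\mathsf{D^b}$ are $G$-invariant in $\mathsf{D}^-$ because the $\rho_g$ are exact. Restricting the $G$-structure, via Remark~\ref{inclusionfunctor} and the fully faithful inclusions, gives a $G$-functor $\mathsf{D^b}(\mathcal{B}) \to \mathsf{D^b}(\mathcal{A})$. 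The statements for right derived functors follow dually, replacing projective resolutions by injective ones and $\mathsf{D}^-$ by $\mathsf{D}^+$. Invertibility of $|G|$ is used only to work within the setting of \cite{ChaoSun, kkp}, where the equivariant derived categories are triangulated.
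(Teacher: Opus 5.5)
Your proposal is correct and follows the same route as the paper. Both realize $\mathbb{L}\mathsf{H}$ as $\mathsf{Q}_\mathcal{A}\circ\mathsf{K}\mathsf{H}$ on $\mathsf{K}^-(\Proj\mathcal{B})$, apply $\sigma_g$ degreewise to get the $G$-structure on $\mathsf{K}\mathsf{H}$, compose with the $G$-functor structure of the quotient functor, and then restrict to bounded complexes. Your only addition is that you explicitly give the resolution equivalence $\mathsf{p}$ a $G$-structure via Lemma~\ref{adjointequiv} and check that it does not depend on the choice of $\mathsf{p}$; the paper leaves this implicit in its identification $\mathsf{D}^-(\mathcal{B})\simeq\mathsf{K}^-(\Proj\mathcal{B})$.
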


\begin{proof}
    Because $\mathcal{B}$ has enough projectives, we canonically identify $\mathsf{D}^-(\mathcal{B})$ with $\mathsf{K}^-(\Proj \mathcal{B})$ via taking projective resolutions. Under this identification, the left derived functor $\mathbb{L}\mathsf{H}$ is defined as the composition:
    \[
    \mathsf{K}^-(\Proj \mathcal{B}) \xrightarrow{\mathsf{K}\mathsf{H}} \mathsf{K}^-(\mathcal{A}) \xrightarrow{\mathsf{Q}_\mathcal{A}} \mathsf{D}^-(\mathcal{A})
    \]
    where $\mathsf{K}\mathsf{H}$ denotes the functor induced by $\mathsf{H}$ acting term-wise on chain complexes, and $\mathsf{Q}_\mathcal{A}$ is the quotient functor.
    The quotient functor $\mathsf{Q}_\mathcal{A}$ is canonically a $G$-functor (see \cite[Remark~2.25]{kkp}). Therefore, by Remark~\ref{compositionGfunctors}, it suffices to show that $\mathsf{K}\mathsf{H}$ is a $G$-functor, as the composition of $G$-functors is naturally a $G$-functor.
    
    Let $P^\bullet$ be an object in $\mathsf{K}^-(\Proj \mathcal{B})$. The $G$-functor structure of $\mathsf{H}$ provides a family of natural isomorphisms $\sigma_g \colon \mathsf{H} \circ \rho_g \xrightarrow{\simeq} \bar{\rho}_g \circ \mathsf{H}$ at the level of the abelian categories. Applying this component-wise to the chain complex yields an isomorphism:
\begin{equation}\label{eq: G-functor of homotopy}
      \sigma_g^{\mathsf{K}\mathsf{H}} \colon \mathsf{H}({^g \! P^\bullet}) \xrightarrow{\simeq} {^g (\mathsf{H}(P^\bullet))}  
\end{equation}
    Because $\mathsf{H}$ is an additive functor, it preserves chain homotopies. Thus, the component-wise isomorphism $\sigma_g^{\mathsf{K}\mathsf{H}}$ lifts to an isomorphism in $\mathsf{K}^-(\mathcal{A})$. The required compatibility conditions of diagram~(\ref{G-functor}) for $\sigma^{\mathsf{K}\mathsf{H}}$ follow immediately from those of the underlying $\sigma$. Thus, $\mathsf{K}\mathsf{H}$ is a $G$-functor and $\mathbb{L}\mathsf{H}$ becomes a $G$-functor on $\mathsf{D^-}(\mathcal{B})$ with the induced structure $\sigma^{\mathbb{L}\mathsf{H}} = \sigma^{\mathsf{Q}_\mathcal{A}} \circ \mathsf{Q}_\mathcal{A}(\sigma^{\mathsf{K}\mathsf{H}})$.
    
    Finally, if we assume $\mathbb{L}_n\mathsf{H} = 0$ for large enough $n$, then both $\mathbb{L}\mathsf{H}$ and the natural isomorphisms $\sigma_g^{\mathbb{L}\mathsf{H}}$ restrict to the bounded derived categories.
\end{proof}

Because $\mathbb{L}\mathsf{H}$ is a $G$-functor, it induces an equivariant derived functor $(\mathbb{L}\mathsf{H})^G \colon \mathsf{D^-}(\mathcal{B})^G \to \mathsf{D^-}(\mathcal{A})^G$ by the triangulated version of Remark~\ref{gfunctorinducesequivariant}. We now prove that it commutes with $\mathbb{L}(\mathsf{H}^G)$ up to the comparison functor.

\begin{prop} \label{prop: derived_commutation_K}
    Let $\mathsf{H}\colon \mathcal{B} \to \mathcal{A}$ be a right exact $G$-functor between abelian categories. Assume that $\mathcal{B}$ has enough projectives and $|G|$ is invertible in both categories. The following diagram commutes:
    \[
    \begin{tikzcd}
        \mathsf{D^-}(\mathcal{B}^G) \arrow[r, "\mathsf{K}_\mathcal{B}"] \arrow[d, "\mathbb{L}(\mathsf{H}^G)"'] & \mathsf{D^-}(\mathcal{B})^G \arrow[d, "(\mathbb{L}\mathsf{H})^G"] \\
        \mathsf{D^-}(\mathcal{A}^G) \arrow[r, "\mathsf{K}_\mathcal{A}"] & \mathsf{D^-}(\mathcal{A})^G
    \end{tikzcd}
    \]
    If $\mathbb{L}_n\mathsf{H} =0$ for large enough $n$, then this diagram restricts to the bounded derived categories. The dual statement for the right derived functors also holds.
\end{prop}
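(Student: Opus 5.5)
We prove the commutativity by computing both composites on a bounded above complex of projectives, reducing everything to the homotopy category. By \S\ref{sec: projectives injectives}, $|G|$ invertible gives $\Proj \mathcal{B}^G = (\Proj\mathcal{B})^G$. Hence every object of $\mathsf{D^-}(\mathcal{B}^G)$ is isomorphic to a complex $(P,\pi)^\bullet$ of equivariant projectives, and $\mathsf{D^-}(\mathcal{B}^G)\simeq \mathsf{K}^-(\Proj\mathcal{B}^G)$. Under this identification $\mathbb{L}(\mathsf{H}^G)=\mathsf{Q}_{\mathcal{A}^G}\circ \mathsf{K}(\mathsf{H}^G)$, and likewise $\mathbb{L}\mathsf{H}=\mathsf{Q}_\mathcal{A}\circ\mathsf{K}\mathsf{H}$ on $\mathsf{K}^-(\Proj\mathcal{B})$, as in the proof of Lemma~\ref{lem: LF_is_G_functor}. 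The key observation is that $\mathsf{For}$ sends $(P,\pi)^\bullet$ to a complex $P^\bullet$ of projectives of $\mathcal{B}$. Therefore $\mathsf{K}_\mathcal{B}$ maps the chosen resolution to an equivariant object $(P^\bullet,\pi^\bullet)$ whose underlying complex is already a projective resolution, and no new resolution has to be chosen to evaluate $(\mathbb{L}\mathsf{H})^G$.

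We first treat the homotopy-level square
\[
\mathsf{K}_\mathcal{A}\circ \mathsf{K}(\mathsf{H}^G)=(\mathsf{K}\mathsf{H})^G\circ \mathsf{K}_\mathcal{B}
\]
on $\mathsf{K}^-(\Proj\mathcal{B}^G)$, which we expect to hold strictly. Going left then down, $(P,\pi)^\bullet$ becomes the complex with terms $(\mathsf{H}P^n,\ \sigma_g\mathsf{H}(\pi^n_g))$ by Remark~\ref{gfunctorinducesequivariant}, and $\mathsf{K}_\mathcal{A}$ reassembles this as the complex $\mathsf{H}P^\bullet$ with linearization given degreewise by $\sigma_g^{{}^gP^n}\mathsf{H}(\pi^n_g)$. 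Going the other way, $(P^\bullet,\pi^\bullet)$ is sent to $(\mathsf{H}P^\bullet,\ \sigma^{\mathsf{K}\mathsf{H}}_g\circ \mathsf{H}(\pi^\bullet_g))$. By \eqref{eq: G-functor of homotopy} the map $\sigma^{\mathsf{K}\mathsf{H}}_g$ is precisely the degreewise $\sigma_g$, so the two equivariant complexes coincide, and morphisms are handled the same way.

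Next we pass to derived categories by composing with quotient functors. By \eqref{eq: commutative quotient functors} for $\mathcal{A}$ we have $\mathsf{K}_\mathcal{A}\mathsf{Q}_{\mathcal{A}^G}=\mathsf{Q}_\mathcal{A}^G\mathsf{K}_\mathcal{A}$. By Remark~\ref{compositionGfunctors} we have $(\mathbb{L}\mathsf{H})^G=(\mathsf{Q}_\mathcal{A}\circ\mathsf{K}\mathsf{H})^G=\mathsf{Q}_\mathcal{A}^G\circ(\mathsf{K}\mathsf{H})^G$, with $G$-structure $\sigma^{\mathbb{L}\mathsf{H}}$ as given in Lemma~\ref{lem: LF_is_G_functor}. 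Combining these with the homotopy-level identity and the quotient square for $\mathcal{B}$ yields the claimed commutativity, up to the canonical isomorphisms coming from the identifications with $\mathsf{K}^-(\Proj)$.

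The main obstacle is compatibility of these identifications. The isomorphism $\mathsf{D}^-(\mathcal{B})\simeq\mathsf{K}^-(\Proj\mathcal{B})$ must be a $G$-equivalence whose $G$-structure is compatible with that of $\mathsf{Q}_\mathcal{B}$ from \cite[Remark~2.25]{kkp}. Only then does evaluating $(\mathbb{L}\mathsf{H})^G$ on $\mathsf{K}_\mathcal{B}$ of a resolution agree with what was computed above. We will check this by noting that $\mathsf{K}^-(\Proj\mathcal{B})\to\mathsf{D}^-(\mathcal{B})$ is a strict $G$-functor, since $\Proj\mathcal{B}$ is $G$-invariant (Remark~\ref{inclusionfunctor}); its quasi-inverse then inherits a $G$-structure by Lemma~\ref{adjointequiv}, and we apply $(-)^G$ throughout.

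For the bounded case, if $\mathbb{L}_n\mathsf{H}=0$ for $n$ large, then Proposition~\ref{finitenessoflocaldimension} gives $\mathbb{L}_n\mathsf{H}^G=0$ for $n$ large as well. Hence all four functors restrict to $\mathsf{D^b}$, using Lemma~\ref{lem: LF_is_G_functor}, and the square restricts with them. The statement for right derived functors follows by the dual argument, using injectives.
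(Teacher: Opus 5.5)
Your proposal is correct and follows essentially the same route as the paper. Both arguments identify $\mathsf{D}^-$ with $\mathsf{K}^-$ of projectives (using $\Proj\mathcal{B}^G=(\Proj\mathcal{B})^G$), check the homotopy-level square $\mathsf{K}_\mathcal{A}\circ\mathsf{K}(\mathsf{H}^G)=(\mathsf{K}\mathsf{H})^G\circ\mathsf{K}_\mathcal{B}$ degreewise via $\sigma$, and then paste it with the quotient squares. Two of your additions make explicit points the paper leaves implicit: that the identification $\mathsf{D}^-(\mathcal{B})\simeq\mathsf{K}^-(\Proj\mathcal{B})$ must be a $G$-equivalence, and that the bounded restriction of $\mathbb{L}(\mathsf{H}^G)$ uses Proposition~\ref{finitenessoflocaldimension}, not only Lemma~\ref{lem: LF_is_G_functor}.
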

\begin{proof}
    Because $\mathcal{B}$ and $\mathcal{B}^G$ have enough projectives, we can identify $\mathsf{D^-}(\mathcal{B})$ with $\mathsf{K^-}(\Proj \mathcal{B})$ and $\mathsf{D^-}(\mathcal{B}^G)$ with $\mathsf{K^-}(\Proj \mathcal{B}^G)$ via projective resolutions. Under these identifications, the derived functors decompose:
    \[ \mathbb{L}(\mathsf{H}^G) \simeq \mathsf{Q}_{\mathcal{A}^G} \circ \mathsf{K}(\mathsf{H}^G) \]
    \[ (\mathbb{L}\mathsf{H})^G \simeq (\mathsf{Q}_\mathcal{A} \circ \mathsf{K}\mathsf{H})^G \simeq \mathsf{Q}_\mathcal{A}^G \circ (\mathsf{K}\mathsf{H})^G \]
    where the bottom isomorphism is established in Lemma~\ref{lem: LF_is_G_functor}.
    The desired commutativity is equivalent to the commutativity of the following diagram:
    \[
    \begin{tikzcd}
        \mathsf{K^-}(\Proj \mathcal{B}^G) \arrow[r, "\mathsf{K}_\mathcal{B}"] \arrow[d, "\mathsf{K}(\mathsf{H}^G)"'] & \mathsf{K^-}( \Proj \mathcal{B})^G \arrow[d, "(\mathsf{K}\mathsf{H})^G"] \\
        \mathsf{K^-}(\mathcal{A}^G) \arrow[d, "\mathsf{Q}_{\mathcal{A}^G}"'] \arrow[r, "\mathsf{K}_\mathcal{A}"] & \mathsf{K^-}(\mathcal{A})^G \arrow[d, "\mathsf{Q}_{\mathcal{A}}^G"]\\
    \mathsf{D}^-(\mathcal{A}^G) \arrow[r, "\mathsf{K}_{\mathcal{A}}"] & \mathsf{D}^-(\mathcal{A})^G
    \end{tikzcd}   
    \]
    Note that the bottom square commutes by eq.~(\ref{eq: commutative quotient functors}).
    Therefore, it suffices to check that the upper square commutes, i.e., $\mathsf{K}_\mathcal{A} \circ \mathsf{K}(\mathsf{H}^G) \simeq (\mathsf{K}\mathsf{H})^G \circ \mathsf{K}_\mathcal{B}$. 
    
    Let $(P, \pi)^\bullet \in \mathsf{K}^{-}(\Proj \mathcal{B}^G)$. Applying $\mathsf{K}(\mathsf{H}^G)$ yields the complex of equivariant objects $(\mathsf{H}P, \sigma^\mathsf{H}\mathsf{H}\pi)^\bullet$. The comparison functor $\mathsf{K}_\mathcal{A}$, then yields the equivariant complex $((\mathsf{H}P)^\bullet, (\sigma^\mathsf{H}\mathsf{H}\pi)^\bullet)$. 
    Now, following the other direction, we have $\mathsf{K}_\mathcal{B}((P, \pi)^\bullet) = (P^\bullet, \pi^\bullet)$.
    Applying $(\mathsf{K}\mathsf{H})^G$ yields the underlying complex $(\mathsf{H}P)^\bullet$, while the linearization is given by the induced $G$-structure $\sigma^{\mathsf{K}\mathsf{H}} \circ \mathsf{H}(\pi^\bullet)$. Because $\sigma^{\mathsf{K}\mathsf{H}}$ is defined term-wise by $\sigma^\mathsf{H}$, this yields also the complex $((\mathsf{H}P)^\bullet, (\sigma^\mathsf{H}\mathsf{H}\pi)^\bullet)$. 

    For a chain map $f^\bullet \colon (P, \pi)^\bullet \to (P', \pi')^\bullet$, both paths apply $\mathsf{H}$ term-by-term, yielding the underlying chain map $\mathsf{H}(f^\bullet)$, which commutes with the linearizations because of the naturality of $\sigma^\mathsf{H}$.

    Finally, if $\mathbb{L}_n\mathsf{H} =0$ for large enough $n$, Lemma~\ref{lem: LF_is_G_functor} guarantees that both $(\mathbb{L}\mathsf{H})^G$ and $\mathbb{L}(\mathsf{H}^G)$ map bounded complexes to bounded complexes. Thus, the diagram canonically restricts to the bounded derived categories.
\end{proof}

\section{Equivariant Singularity Categories}\label{sec: equivariant singularity categories}

Let $\mathcal{A}$ be an abelian category with enough projectives. The \emph{singularity category} of $\mathcal{A}$, introduced in \cite{buchweitz, Orlov}, is the Verdier quotient
\[
\mathsf{D}_{\mathsf{sg}}(\mathcal{A}):=\mathsf{D}^{\mathsf{b}}(\mathcal{A})/\mathsf{K}^{\mathsf{b}}(\Proj\mathcal{A}).
\]
A triangulated functor $\mathsf{H} \colon \mathsf{D^b}(\mathcal{B}) \to \mathsf{D^b}(\mathcal{A})$ descends to a functor $\mathsf{H}_{\mathsf{sg}} \colon \mathsf{D_{sg}}(\mathcal{B}) \to \mathsf{D_{sg}}(\mathcal{A})$ if $\mathsf{H}(\mathsf{K^b}(\Proj \mathcal{B})) \subseteq \mathsf{K^b}(\Proj \mathcal{A})$. For a right exact functor $\mathsf{F}$, we denote by $\mathbb{L}_{\mathsf{sg}}\mathsf{F}$ the functor induced by $\mathbb{L}\mathsf{F}$ on the singularity categories.
We recall two results from \cite{kostas}, that we lift to the equivariant setting.

\begin{prop}  \label{cleft_of_singularity}
\textnormal{(\!\!\cite[Proposition~6.3]{kostas})}
        Let $(\mathcal{B},\mathcal{A},\mathsf{i},\mathsf{e},\mathsf{l})$ be a cleft extension of abelian categories with enough projectives such that the following conditions are satisfied: 
    \begin{itemize}
        \item[\textnormal{(i)}] $\pd{_{\mathcal{A}}\mathsf{i}(P)}<\infty$ for every $P\in\Proj\mathcal{B}$. 
        \item[\textnormal{(ii)}] $\pd{_{\mathcal{B}}\mathsf{e}(P)}<\infty$ for every $P\in\Proj\mathcal{A}$. 
        \item[\textnormal{(iii)}] $\mathbb{L}_n\mathsf{l}=0$ for $n$ large enough. 
        \item[\textnormal{(iv)}] $\mathbb{L}_n\mathsf{q}=0$ for $n$ large enough. 
    \end{itemize}
    Then we get a diagram of singularity categories and triangle functors as below 
    \[
    \begin{tikzcd}
\mathsf{D}_{\mathsf{sg}}(\mathcal{B}) \arrow[rr, "\mathsf{i}"] &  & \mathsf{D}_{\mathsf{sg}}(\mathcal{A}) \arrow[rr, "\mathsf{e}"] \arrow[ll, "\mathbb{L}_{\mathsf{sg}}\mathsf{q}"', bend right] &  & \mathsf{D}_{\mathsf{sg}}(\mathcal{B}) \arrow[ll, "\mathbb{L}_{\mathsf{sg}}\mathsf{l}"', bend right]
\end{tikzcd}
    \]
    such that $(\mathbb{L}_{\mathsf{sg}}\mathsf{l},\mathsf{e})$ and $(\mathbb{L}_{\mathsf{sg}}\mathsf{q},\mathsf{i})$ are adjoint pairs, $\mathsf{ei}\simeq \mathsf{Id}_{\mathsf{D}_{\mathsf{sg}}(\mathcal{B})}$ and $\mathbb{L}_{\mathsf{sg}}\mathsf{q}\mathbb{L}_{\mathsf{sg}}\mathsf{l}\simeq\mathsf{Id}_{\mathsf{D}_{\mathsf{sg}}(\mathcal{B})}$, where $\mathbb{L}_{\mathsf{sg}}\mathsf{l}$ denotes the functor induced by $\mathbb{L}\mathsf{l}$ and $\mathbb{L}_{\mathsf{sg}}\mathsf{q}$ the functor induced by $\mathbb{L}\mathsf{q}$.
\end{prop}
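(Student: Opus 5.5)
We will prove the statement by working first at the level of bounded derived categories and then passing to the Verdier quotients. The four hypotheses are exactly what each functor needs in order to descend. Hypotheses (i) and (ii) say that the exact functors $\mathsf{i}$ and $\mathsf{e}$, which extend termwise to triangle functors on $\mathsf{D^b}$, send $\mathsf{K^b}(\Proj)$ into $\mathsf{K^b}(\Proj)$. Indeed, a bounded complex of projectives is built from finitely many stalk complexes by cones. Each stalk $\mathsf{i}(P)$ (resp.\ $\mathsf{e}(P)$) has finite projective dimension, so it is isomorphic in $\mathsf{D^b}$ to an object of $\mathsf{K^b}(\Proj)$, and $\mathsf{K^b}(\Proj)$ is a thick subcategory. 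Hypotheses (iii) and (iv) ensure that $\mathbb{L}\mathsf{l}$ and $\mathbb{L}\mathsf{q}$, defined on $\mathsf{D^-}$ via $\mathsf{K^-}(\Proj)$, restrict to triangle functors $\mathsf{D^b}\to\mathsf{D^b}$. Since $\mathsf{l}$ and $\mathsf{q}$ are left adjoints of exact functors, they preserve projectives, so $\mathbb{L}\mathsf{l}$ and $\mathbb{L}\mathsf{q}$ map $\mathsf{K^b}(\Proj)$ to $\mathsf{K^b}(\Proj)$ on the nose. Thus all four functors induce triangle functors on $\mathsf{D_{sg}}$.

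Next, we establish the adjunctions on $\mathsf{D^b}$. The standard derived adjunction for a right exact left adjoint of an exact functor gives $(\mathbb{L}\mathsf{l},\mathsf{e})$ as an adjoint pair on $\mathsf{D^-}$. Concretely, for $P^\bullet\in\mathsf{K^-}(\Proj\mathcal{B})$ one has $\mathrm{Hom}_{\mathsf{K}}(\mathsf{l}P^\bullet,Y^\bullet)\cong\mathrm{Hom}_{\mathsf{K}}(P^\bullet,\mathsf{e}Y^\bullet)$ termwise. Since $\mathsf{l}P^\bullet$ is a complex of projectives, the left side computes $\mathrm{Hom}_{\mathsf{D}}$; since $P^\bullet$ is K-projective, so does the right side. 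This adjunction restricts to the full subcategories $\mathsf{D^b}$, and the same argument gives $(\mathbb{L}\mathsf{q},\mathsf{i})$. We then use the general fact that an adjunction $(\mathsf{L},\mathsf{R})$ of triangle functors, both preserving the thick subcategories being killed, descends to an adjunction between the Verdier quotients. One checks this by passing unit and counit to the quotient, where the triangle identities persist. This yields the adjoint pairs $(\mathbb{L}_{\mathsf{sg}}\mathsf{l},\mathsf{e})$ and $(\mathbb{L}_{\mathsf{sg}}\mathsf{q},\mathsf{i})$.

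Finally, we obtain the two isomorphisms. The isomorphism $\mathsf{ei}\simeq\mathsf{Id}$ holds already on $\mathcal{B}$, hence termwise on complexes, on $\mathsf{D^b}$, and on $\mathsf{D_{sg}}$. For the second, we note that $\mathsf{qi}$ is left adjoint to $\mathsf{ei}\simeq\mathsf{Id}$, so $\mathsf{q}\mathsf{l}\simeq\mathsf{Id}_\mathcal{B}$ by uniqueness of adjoints. Because $\mathsf{l}$ preserves projectives, on $P^\bullet\in\mathsf{K^-}(\Proj\mathcal{B})$ we get $\mathbb{L}\mathsf{q}\,\mathbb{L}\mathsf{l}(P^\bullet)=\mathsf{q}\mathsf{l}P^\bullet\cong P^\bullet$, naturally. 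Hence $\mathbb{L}\mathsf{q}\circ\mathbb{L}\mathsf{l}\simeq\mathsf{Id}$ on $\mathsf{D^b}(\mathcal{B})$, and this descends to $\mathsf{D_{sg}}$.

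We expect the main obstacle to be the descent of the adjunctions to the Verdier quotients. Morphisms in $\mathsf{D_{sg}}$ are roofs, so the naturality of the Hom-bijection must be checked carefully. We will handle this by working with units and counits rather than Hom-bijections, using the universal property of the Verdier quotient to induce natural transformations $\mathsf{Id}\to\mathsf{e}\,\mathbb{L}_{\mathsf{sg}}\mathsf{l}$ and $\mathbb{L}_{\mathsf{sg}}\mathsf{l}\,\mathsf{e}\to\mathsf{Id}$. The triangle identities then hold in the quotient because they hold before localizing.
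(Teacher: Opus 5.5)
Your argument is correct and is the standard one. The paper gives no proof of its own here and simply cites \cite[Proposition~6.3]{kostas}. Your route matches that argument: you descend $\mathsf{i}$ and $\mathsf{e}$ using (i)--(ii) and the thickness of the perfect subcategories, descend $\mathbb{L}\mathsf{l}$ and $\mathbb{L}\mathsf{q}$ using (iii)--(iv) together with the fact that $\mathsf{l}$ and $\mathsf{q}$ preserve projectives, and then push the derived adjunctions and the isomorphisms $\mathsf{ei}\simeq\mathsf{Id}$ and $\mathsf{ql}\simeq\mathsf{Id}$ through the Verdier quotients.

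Two small fixes. In your last paragraph, the functor left adjoint to $\mathsf{ei}$ is $\mathsf{ql}$, not $\mathsf{qi}$. You also use exactness of $\mathsf{i}$ twice (to extend it termwise and to see that $\mathsf{q}$ preserves projectives), so say why it holds: the faithful exact functor $\mathsf{e}$ reflects exactness, and $\mathsf{ei}\simeq\mathsf{Id}_{\mathcal{B}}$ is exact.
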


\begin{thm}\textnormal{(\!\!\cite[Theorem~6.11]{kostas})} \label{thm: equivalence_singularity}
    Let $(\mathcal{B},\mathcal{A},\mathsf{i},\mathsf{e},\mathsf{l})$ be a cleft extension of abelian categories with enough projectives. Assume that the following hold: 
    \begin{itemize}
        \item[\textnormal{(a)}] $\pd{_{\mathcal{B}}\mathsf{F}(P)}<\infty$ for every $P\in\Proj\mathcal{B}$. 
        \item[\textnormal{(b)}] $\mathbb{L}_n\mathsf{F}=0$ for $n$ large enough. 
        \item[\textnormal{(c)}] The functor $\mathsf{e}$ reflects objects with finite projective dimension.
    \end{itemize} 
    The following are equivalent: 
    \begin{itemize}
        \item[\textnormal{(i)}] The functor $\mathsf{e}\colon \mathsf{D}_{\mathsf{sg}}(\mathcal{A})\rightarrow \mathsf{D}_{\mathsf{sg}}(\mathcal{B})$ is an equivalence. 
        \item[\textnormal{(ii)}] $\mathbb{L}_{\mathsf{sg}}\mathsf{F}\simeq 0$ in $\mathsf{D}_{\mathsf{sg}}(\mathcal{B})$. 
    \end{itemize}
\end{thm}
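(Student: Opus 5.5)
The plan is to compare both conditions through the adjunction $(\mathbb{L}_{\mathsf{sg}}\mathsf{l},\mathsf{e})$ on singularity categories, using the splitting $\mathsf{e}\mathsf{l}\simeq \mathsf{Id}_{\mathcal{B}}\oplus\mathsf{F}$. The first task is to check that everything descends to $\mathsf{D}_{\mathsf{sg}}$. Since $\mathsf{e}$ is exact, $\mathbb{L}_n(\mathsf{e}\mathsf{l})\simeq \mathsf{e}\,\mathbb{L}_n\mathsf{l}$. By the splitting, this is $\mathbb{L}_n\mathsf{F}$ for $n\geq 1$. As $\mathsf{e}$ is faithful exact, it reflects zero objects, so (b) gives $\mathbb{L}_n\mathsf{l}=0$ for $n$ large. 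Hence $\mathbb{L}\mathsf{l}$ restricts to $\mathsf{D^b}$.

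Next I would show that $\mathsf{e}$ preserves perfect complexes. Every $P\in\Proj\mathcal{A}$ is a summand of some $\mathsf{l}(Q)$ with $Q\in\Proj\mathcal{B}$ (take the counit epimorphism $\mathsf{l}\mathsf{e}P\to P$ after covering $\mathsf{e}P$). Then $\mathsf{e}P$ is a summand of $Q\oplus\mathsf{F}Q$, which has finite projective dimension by (a). Also $\mathbb{L}\mathsf{l}$ sends projectives to projectives. Thus both $\mathsf{e}$ and $\mathbb{L}\mathsf{l}$ induce functors on singularity categories. The adjunction $(\mathsf{l},\mathsf{e})$ derives to $(\mathbb{L}\mathsf{l},\mathsf{e})$ on $\mathsf{D^b}$ and passes to the Verdier quotients, as in Proposition~\ref{cleft_of_singularity}. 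On $\mathsf{D}_{\mathsf{sg}}(\mathcal{B})$ we then get $\mathsf{e}\,\mathbb{L}_{\mathsf{sg}}\mathsf{l}\simeq \mathsf{Id}\oplus\mathbb{L}_{\mathsf{sg}}\mathsf{F}$. Under this identification the unit $\mathsf{Id}\to\mathsf{e}\,\mathbb{L}_{\mathsf{sg}}\mathsf{l}$ is the split inclusion of the first summand. I would verify this at the abelian level, where the unit $\delta$ is the canonical inclusion $X\to X\oplus\mathsf{F}X$, and then carry it through derivation.

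For (i)$\Rightarrow$(ii): if $\mathsf{e}$ is an equivalence on singularity categories, its left adjoint $\mathbb{L}_{\mathsf{sg}}\mathsf{l}$ is a quasi-inverse, so the unit is an isomorphism. A split inclusion $X\to X\oplus\mathbb{L}_{\mathsf{sg}}\mathsf{F}X$ is an isomorphism only if $\mathbb{L}_{\mathsf{sg}}\mathsf{F}X\simeq 0$. Hence $\mathbb{L}_{\mathsf{sg}}\mathsf{F}\simeq 0$.

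For (ii)$\Rightarrow$(i): now the unit is an isomorphism, so $\mathbb{L}_{\mathsf{sg}}\mathsf{l}$ is fully faithful. It remains to show that the counit $\varepsilon\colon\mathbb{L}_{\mathsf{sg}}\mathsf{l}\,\mathsf{e}\to\mathsf{Id}$ is invertible. The triangle identity $\mathsf{e}\varepsilon\circ\delta\mathsf{e}=\mathsf{id}$, together with $\delta$ being invertible, makes $\mathsf{e}\varepsilon$ invertible. So the cone $C$ of $\varepsilon_X$ satisfies $\mathsf{e}C\simeq 0$ in $\mathsf{D}_{\mathsf{sg}}(\mathcal{B})$. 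The key step, which I expect to be the main obstacle, is to show that $\mathsf{e}$ is conservative on $\mathsf{D}_{\mathsf{sg}}(\mathcal{A})$ using (c). Every object of $\mathsf{D}_{\mathsf{sg}}(\mathcal{A})$ is isomorphic to a shift of a module, namely a high syzygy obtained by brutal truncation of a projective resolution. Since $\mathsf{e}$ is exact and preserves projectives up to finite projective dimension, it commutes with this reduction up to perfect complexes. So we may assume $C\simeq M[n]$ with $M\in\mathcal{A}$ and $\mathsf{e}M\in\mathsf{K^b}(\Proj\mathcal{B})$, i.e.\ $\pd\mathsf{e}M<\infty$. Then (c) yields $\pd M<\infty$, so $C\simeq0$. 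Consequently $\varepsilon$ is an isomorphism and $\mathsf{e}$ is an equivalence.
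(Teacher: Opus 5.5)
The paper does not prove this statement: it quotes it from \cite[Theorem~6.11]{kostas} and uses it as a black box (through Remark~\ref{rem: perfect nilpotent pdimF(X)} and in Theorem~\ref{thm: main B}). So there is no internal argument to compare yours with. Your proposal is a correct, self-contained proof. The steps all hold up: $\mathsf{e}\,\mathbb{L}_n\mathsf{l}\simeq\mathbb{L}_n\mathsf{F}$ for $n\geq1$, together with faithfulness of $\mathsf{e}$, turns (b) into $\mathbb{L}_n\mathsf{l}=0$ for $n\gg0$. Writing $P\in\Proj\mathcal{A}$ as a summand of some $\mathsf{l}(Q)$ turns (a) into $\pd_{\mathcal{B}}\mathsf{e}(P)<\infty$. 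The adjunction $(\mathbb{L}\mathsf{l},\mathsf{e})$ then descends to the Verdier quotients because both functors preserve perfect complexes. The two directions follow from the unit and the triangle identity exactly as you describe. In (ii)$\Rightarrow$(i), reducing a cone in $\mathsf{D_{sg}}(\mathcal{A})$ to a shifted syzygy and applying (c) correctly gives that $\mathsf{e}$ reflects zero objects on singularity categories.

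Your route also makes the logical structure clearer than simply citing Proposition~\ref{cleft_of_singularity}:
\begin{itemize}
\item You derive the two hypotheses of that proposition you actually need ($\pd_{\mathcal{B}}\mathsf{e}(P)<\infty$ and $\mathbb{L}_n\mathsf{l}=0$ for $n\gg0$) from (a) and (b). The conditions on $\mathsf{i}$ and $\mathsf{q}$ never enter.
\item (i)$\Rightarrow$(ii) uses only (a) and (b). Condition (c) is needed solely for conservativity of $\mathsf{e}$ in (ii)$\Rightarrow$(i).
\item You rightly insist that the unit $\mathsf{Id}\to\mathsf{e}\,\mathbb{L}_{\mathsf{sg}}\mathsf{l}$ is the split inclusion of the first summand. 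An abstract isomorphism $X\simeq X\oplus\mathbb{L}_{\mathsf{sg}}\mathsf{F}X$ would not by itself force $\mathbb{L}_{\mathsf{sg}}\mathsf{F}X\simeq0$.
\end{itemize}

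The paper only records an abstract isomorphism $\mathsf{el}\simeq\mathsf{Id}_{\mathcal{B}}\oplus\mathsf{F}$, so that last identification is the step you should write out. Apply $\mathsf{e}$ to the exact sequence $0\to\mathsf{G}\to\mathsf{le}\xrightarrow{\varepsilon}\mathsf{Id}_{\mathcal{A}}\to0$ at $\mathsf{i}X$; the counit is epi because $\mathsf{e}$ is faithful. The triangle identity $\mathsf{e}\varepsilon_{\mathsf{i}X}\circ\delta_{\mathsf{ei}X}=\mathrm{id}$ shows that $\delta$ splits the resulting sequence $0\to\mathsf{F}X\to\mathsf{elei}X\to\mathsf{ei}X\to0$. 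Transporting along $\mathsf{ei}\simeq\mathsf{Id}_{\mathcal{B}}$ by naturality of $\delta$ gives the decomposition with $\delta$ as the first inclusion. It then holds termwise on $\mathsf{K}^-(\Proj\mathcal{B})$, where the derived unit is computed.

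Finally, (b) must be read uniformly in the object, as the paper's convention does, for $\mathbb{L}\mathsf{l}$ and $\mathbb{L}\mathsf{F}$ to preserve $\mathsf{D^b}$.
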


\begin{rem}\label{rem: perfect nilpotent pdimF(X)}
    Assuming that $\mathsf{F}$ is perfect \cite[Definition~3.1]{kostas} and nilpotent (i.e.\ $\mathsf{F}^n = 0$ for $n \geq 1$), then conditions (i)-(iv) of Proposition~\ref{cleft_of_singularity} and conditions (a)-(c) of Theorem~\ref{thm: equivalence_singularity} hold, see \cite[Corollary~6.6]{kostas} and \cite[Corollary~6.13]{kostas}, respectively, where the latter, while it is stated for cleft extension of module categories, holds for cleft extensions of abelian categories.

    By \cite[Lemma~6.12]{kostas}, in Theorem~\ref{thm: equivalence_singularity} if instead of $\pd_\mathcal{B}\mathsf{F}(P) < \infty$ for every $P \in \Proj\mathcal{B}$, we assume that $\pd_\mathcal{B} \mathsf{F}(X)<\infty$ for all $X \in \mathcal{B}$, then  $\mathbb{L}_\mathsf{sg}\mathsf{F}\simeq 0$ in $\mathsf{D_{sg}}(\mathcal{B})$ holds and thus $\mathsf{e}$ induces a singular equivalence.
\end{rem}

We now lift the above homological properties to the equivariant setting. In general, if an exact $G$-functor $\mathsf{e}\colon \mathcal{A} \to \mathcal{B}$ induces a singular equivalence and $|G|$ is invertible in both $\mathcal{A}$ and $\mathcal{B}$, then $\mathsf{e}^G$ induces a singular equivalence up to retracts; see \cite[Theorem~6.11]{kkp}. However, if $\mathsf{e}$ is part of a cleft extension, $\mathsf{e}^G$ upgrades to an honest equivalence.
Note that for $G$-equivariant cleft extensions, invertibility of $|G|$ in $\mathcal{A}$ implies its invertibility in $\mathcal{B}$ since the latter is a full subcategory of $\mathcal{A}$.

\begin{prop} \label{prop:equivariant_conditions}
    Let $(\mathcal{B},\mathcal{A},\mathsf{i},\mathsf{e},\mathsf{l})$ be a $G$-equivariant cleft extension of abelian categories with enough projectives and $|G|$ invertible in $\mathcal{A}$. 
    \begin{enumerate}
        \item[\textnormal{(1)}] Conditions \textnormal{(i)-(iv)} of Proposition \ref{cleft_of_singularity} hold for the cleft extension if and only if they hold for the equivariant cleft extension $(\mathcal{B}^G,\mathcal{A}^G,\mathsf{i}^G,\mathsf{e}^G,\mathsf{l}^G)$.
        \item[\textnormal{(2)}] Conditions \textnormal{(a)-(c)} of Theorem \ref{thm: equivalence_singularity} hold for the cleft extension if and only if they hold for the equivariant cleft extension $(\mathcal{B}^G,\mathcal{A}^G,\mathsf{i}^G,\mathsf{e}^G,\mathsf{l}^G)$.
    \end{enumerate}
\end{prop}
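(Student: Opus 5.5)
We verify each condition separately. Throughout we use Proposition~\ref{prop:equivariant_cleft_properties}, which says the equivariant functors $\mathsf{i}^G,\mathsf{e}^G,\mathsf{l}^G,\mathsf{q}^G,\mathsf{F}^G$ satisfy $\mathsf{For}\circ(-)^G=(-)\circ\mathsf{For}$. We also use $\Proj\mathcal{A}^G=(\Proj\mathcal{A})^G$ and $\Proj\mathcal{B}^G=(\Proj\mathcal{B})^G$, which hold because $|G|$ is invertible (in $\mathcal{B}$ as well, since $\mathcal{B}$ sits fully in $\mathcal{A}$). The main tool is Lemma~\ref{lem:pd_equiv}(ii): for every $(X,\chi)$ in $\mathcal{A}^G$ or $\mathcal{B}^G$ we have $\pd(X,\chi)=\pd X$.

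\textbf{Conditions (i), (ii) and (a).} Take $(P,\pi)\in\Proj\mathcal{B}^G$. Then
\[
\pd_{\mathcal{A}^G}\mathsf{i}^G(P,\pi)=\pd_{\mathcal{A}}\mathsf{For}\,\mathsf{i}^G(P,\pi)=\pd_{\mathcal{A}}\mathsf{i}(P),
\]
and $P\in\Proj\mathcal{B}$. So (i) for the cleft extension implies (i) for the equivariant one.

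For the converse, let $P\in\Proj\mathcal{B}$. Then $\mathsf{Ind}(P)$ is projective in $\mathcal{B}^G$, and $\mathsf{i}^G\mathsf{Ind}(P)\simeq\mathsf{Ind}(\mathsf{i}P)$. By Lemma~\ref{lem:pd_equiv}(i),
\[
\pd_{\mathcal{A}}\mathsf{i}P=\pd_{\mathcal{A}^G}\mathsf{Ind}(\mathsf{i}P)=\pd_{\mathcal{A}^G}\mathsf{i}^G\mathsf{Ind}(P).
\]
Hence (i) for the equivariant extension gives (i) for the original one.

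The same argument, with $\mathsf{e}$ in place of $\mathsf{i}$, proves (ii). With $\mathsf{F}$ in place of $\mathsf{i}$ it proves (a), using that $\mathsf{F}$ is a $G$-functor, so $\mathsf{F}^G\mathsf{Ind}\simeq\mathsf{Ind}\,\mathsf{F}$.

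\textbf{Conditions (iii), (iv) and (b).} The functors $\mathsf{l}$, $\mathsf{q}$ and $\mathsf{F}$ are right exact $G$-functors: they are left adjoints, and $\mathsf{F}\simeq\mathsf{e}\mathsf{G}\mathsf{i}$ is a $G$-functor by Proposition~\ref{prop:equivariant_cleft_properties}(i); $\mathsf{F}$ is right exact by the cleft-extension theory. Proposition~\ref{finitenessoflocaldimension} then gives directly that $\mathbb{L}_n\mathsf{H}=0$ for $n$ large enough if and only if $\mathbb{L}_n\mathsf{H}^G=0$ for $n$ large enough, for each $\mathsf{H}\in\{\mathsf{l},\mathsf{q},\mathsf{F}\}$.

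If one prefers a direct argument: since $\mathsf{For}$ and $\mathsf{Ind}$ are exact and preserve projectives, $\mathbb{L}_n\mathsf{H}\circ\mathsf{For}\simeq\mathsf{For}\circ\mathbb{L}_n\mathsf{H}^G$. Combining this with the fact that every $X$ is a summand of $\mathsf{For}\,\mathsf{Ind}X$ gives both directions.

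\textbf{Condition (c).} This is the step needing the most care. Suppose $\mathsf{e}$ reflects finite projective dimension, and let $(X,\chi)\in\mathcal{A}^G$ with $\pd\mathsf{e}^G(X,\chi)<\infty$. Lemma~\ref{lem:pd_equiv}(ii) applied in $\mathcal{B}$ gives $\pd\mathsf{e}X<\infty$, so $\pd X<\infty$ by hypothesis. Lemma~\ref{lem:pd_equiv}(ii) applied in $\mathcal{A}$ then gives $\pd(X,\chi)<\infty$.

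Conversely, suppose $\mathsf{e}^G$ reflects finite projective dimension, and let $X\in\mathcal{A}$ with $\pd\mathsf{e}X<\infty$. Then
\[
\pd\mathsf{e}^G\mathsf{Ind}X=\pd\mathsf{Ind}(\mathsf{e}X)=\pd\mathsf{e}X<\infty.
\]
So $\pd\mathsf{Ind}X<\infty$, and Lemma~\ref{lem:pd_equiv}(i) gives $\pd X<\infty$.

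The only delicate points are the isomorphisms $\mathsf{H}^G\mathsf{Ind}\simeq\mathsf{Ind}\,\mathsf{H}$ for the inherited $G$-structures, and the fact that invertibility of $|G|$ passes to $\mathcal{B}$. Both were recorded earlier in the paper.
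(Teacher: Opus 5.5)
Your proof is correct and follows the paper's argument essentially step for step. For (i), (ii), (a) and (c) you use $\mathsf{For}$ with Lemma~\ref{lem:pd_equiv}(ii) in one direction, and $\mathsf{Ind}$ with $\mathsf{H}^G\mathsf{Ind}\simeq\mathsf{Ind}\,\mathsf{H}$ and Lemma~\ref{lem:pd_equiv}(i) in the other; for (iii), (iv) and (b) you cite Proposition~\ref{finitenessoflocaldimension}. Your optional direct argument for the vanishing conditions is also valid, and it does not even use the invertibility of $|G|$: it rests on $\mathsf{For}\circ\mathbb{L}_n\mathsf{H}^G\simeq\mathbb{L}_n\mathsf{H}\circ\mathsf{For}$ together with $X$ being a summand of $\mathsf{For}\,\mathsf{Ind}X$.
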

\begin{proof}
    For (1): Let $(P,\pi) \in \Proj \mathcal{B}^G$. Then, $\mathsf{For}(P,\pi) = P$ is projective in $\mathcal{B}$. By condition (i) for the underlying categories, $\pd_{\mathcal{A}}\mathsf{i}(P) < \infty$. Using the identity $\mathsf{i} \circ \mathsf{For} = \mathsf{For} \circ \mathsf{i}^G$, we have that $\pd_{\mathcal{A}}\mathsf{For}(\mathsf{i}^G(P,\pi)) < \infty$. Lemma \ref{lem:pd_equiv} then guarantees that $\pd_{\mathcal{A}^G}\mathsf{i}^G(P,\pi) < \infty$, proving condition (i) for the equivariant extension. 

    Conversely, assume condition (i) holds for the equivariant setting. Let $P \in \Proj \mathcal{B}$. Since $\mathsf{Ind}$ preserves projectives, $\mathsf{Ind}(P) \in \Proj \mathcal{B}^G$. By our assumption, $\pd_{\mathcal{A}^G}\mathsf{i}^G(\mathsf{Ind}(P)) < \infty$. Using the natural isomorphism $\mathsf{i}^G\circ \mathsf{Ind} \simeq \mathsf{Ind} \circ \mathsf{i}$, we have that $\pd_{\mathcal{A}^G}\mathsf{Ind}(\mathsf{i}(P)) < \infty$. Again, Lemma \ref{lem:pd_equiv}, immediately implies $\pd_{\mathcal{A}}\mathsf{i}(P) < \infty$. An identical argument using $\mathsf{e}$ and $\mathsf{e}^G$, instead of $\mathsf{i}$ and $\mathsf{i}^G$, establishes the equivalence for condition (ii).

    Conditions (iii) and (iv) are equivalent to the ones in the equivariant setting by Proposition~\ref{finitenessoflocaldimension}.

    For (2): Condition (a) follows by exactly the same argument as (i) and (ii) above, using Lemma \ref{lem:pd_equiv} and the fact that $\mathsf{F}$ and $\mathsf{F}^G$ commute with the induction and forgetful functors. Condition (b) follows from Proposition~\ref{finitenessoflocaldimension}. 
    
    For condition (c), suppose $\mathsf{e}$ reflects objects of finite projective dimension. Let $(X, \chi) \in \mathcal{A}^G$ and assume $\pd_{\mathcal{B}^G}\mathsf{e}^G(X, \chi) < \infty$. Lemma \ref{lem:pd_equiv} implies that $\pd_{\mathcal{B}}\mathsf{For}(\mathsf{e}^G(X, \chi)) < \infty$, which means that $\pd_{\mathcal{B}}\mathsf{e}(X) < \infty$. Since $\mathsf{e}$ reflects this property, $\pd_{\mathcal{A}}X < \infty$. Applying Lemma \ref{lem:pd_equiv} once more yields $\pd_{\mathcal{A}^G}(X, \chi) < \infty$, showing that $\mathsf{e}^G$ reflects finite projective dimension. The converse is proven analogously using the induction functor.
\end{proof}

Assuming that conditions \textnormal{(i)-(iv)} of Proposition \ref{cleft_of_singularity} hold for the  $G$-equivariant cleft extension $(\mathcal{B},\mathcal{A},\mathsf{i},\mathsf{e},\mathsf{l})$ with $|G|$ invertible in $\mathcal{A}$, we have the following diagram of singularity equivariant categories and triangle functors:
    \[
    \begin{tikzcd}
\mathsf{D}_{\mathsf{sg}}(\mathcal{B}^G) \arrow[rr, "\mathsf{i}^G"] &  & \mathsf{D}_{\mathsf{sg}}(\mathcal{A}^G) \arrow[rr, "\mathsf{e}^G"] \arrow[ll, "\mathbb{L}_{\mathsf{sg}}\mathsf{q}^G"', bend right] &  & \mathsf{D}_{\mathsf{sg}}(\mathcal{B}^G) \arrow[ll, "\mathbb{L}_{\mathsf{sg}}\mathsf{l}^G"', bend right]
\end{tikzcd}
    \]
    such that $(\mathbb{L}_{\mathsf{sg}}\mathsf{l}^G,\mathsf{e}^G)$ and $(\mathbb{L}_{\mathsf{sg}}\mathsf{q}^G,\mathsf{i}^G)$ are adjoint pairs, $\mathsf{e}^G\mathsf{i}^G = (\mathsf{ei})^G\simeq \mathsf{Id}_{\mathsf{D}_{\mathsf{sg}}(\mathcal{B}^G)}$ and $\mathbb{L}_{\mathsf{sg}}\mathsf{q}^G\mathbb{L}_{\mathsf{sg}}\mathsf{l}^G = (\mathbb{L}_{\mathsf{sg}}\mathsf{q}\mathbb{L}_{\mathsf{sg}}\mathsf{l})^G\simeq\mathsf{Id}_{\mathsf{D}_{\mathsf{sg}}(\mathcal{B}^G)}$, where $\mathbb{L}_{\mathsf{sg}}\mathsf{l}^G$ denotes the functor induced by $\mathbb{L}\mathsf{l}^G$ and $\mathbb{L}_{\mathsf{sg}}\mathsf{q}^G$ the functor induced by $\mathbb{L}\mathsf{q}^G$.

\begin{thm} \label{thm: singularity_vanishing}
    Let $\mathsf{H}\colon \mathcal{B} \to \mathcal{A}$ be a right exact $G$-functor between abelian categories with enough projectives. Assume $|G|$ is invertible in both categories and that $\mathbb{L}_{\mathsf{sg}}\mathsf{H}$ exists. Then $\mathbb{L}_{\mathsf{sg}}(\mathsf{H}^G)$ exists and we have that:
    \[
    \mathbb{L}_{\mathsf{sg}}\mathsf{H} \simeq 0 \textnormal{ in } \mathsf{D_{sg}}(\mathcal{B}) \iff \mathbb{L}_{\mathsf{sg}}(\mathsf{H}^G) \simeq 0 \textnormal{ in } \mathsf{D_{sg}}(\mathcal{B}^G).
    \]
\end{thm}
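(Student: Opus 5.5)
The plan is to transport everything through the comparison functors and use the $(\mathsf{Ind},\mathsf{For})$ biadjunction to pass between the two settings.

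\emph{Existence of $\mathbb{L}_{\mathsf{sg}}(\mathsf{H}^G)$.} The hypothesis that $\mathbb{L}_{\mathsf{sg}}\mathsf{H}$ exists means two things: $\mathbb{L}_n\mathsf{H}=0$ for $n$ large, so $\mathbb{L}\mathsf{H}$ restricts to $\mathsf{D^b}$; and $\mathbb{L}\mathsf{H}$ sends $\mathsf{K^b}(\Proj\mathcal{B})$ into $\mathsf{K^b}(\Proj\mathcal{A})$. Proposition~\ref{finitenessoflocaldimension} gives $\mathbb{L}_n\mathsf{H}^G=0$ for $n$ large. For preservation of perfect complexes, I first use $\Proj\mathcal{B}^G=(\Proj\mathcal{B})^G$ to reduce to a single projective $(P,\pi)$. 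Then $\mathbb{L}(\mathsf{H}^G)(P,\pi)=\mathsf{H}^G(P,\pi)$ lies in $\mathsf{D^b}(\mathcal{A}^G)$, and applying $\mathsf{For}$ gives $\mathsf{H}(P)$. Since $\mathsf{H}(P)=\mathbb{L}\mathsf{H}(P)$ is perfect, it has finite projective dimension in $\mathcal{A}$, so Lemma~\ref{lem:pd_equiv}(ii) gives finite projective dimension in $\mathcal{A}^G$. By thick-subcategory closure, $\mathbb{L}(\mathsf{H}^G)$ therefore maps $\mathsf{K^b}(\Proj\mathcal{B}^G)$ into $\mathsf{K^b}(\Proj\mathcal{A}^G)$. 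This more general object-level argument works because an object of $\mathsf{D^b}$ is perfect iff its cohomology-bounded projective resolution is bounded. Concretely, $\mathsf{For}$ commutes with $\mathbb{L}$, that is $\mathsf{For}\circ\mathbb{L}(\mathsf{H}^G)\simeq\mathbb{L}\mathsf{H}\circ\mathsf{For}$, because $\mathsf{For}$ is exact and preserves projectives.

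\emph{Reformulation of vanishing.} $\mathbb{L}_{\mathsf{sg}}\mathsf{H}\simeq0$ means $\mathbb{L}\mathsf{H}(X)\in\mathsf{K^b}(\Proj\mathcal{A})$ for all $X\in\mathsf{D^b}(\mathcal{B})$. Since every bounded complex is built from its terms by finitely many triangles, it suffices to test on objects $X\in\mathcal{B}$: the condition becomes $\pd_{\mathcal{A}}$ of the bounded complex $\mathbb{L}\mathsf{H}(X)$ being finite. Strictly, vanishing of a triangle functor on the quotient means it sends everything to objects isomorphic to zero, and an object of $\mathsf{D_{sg}}$ is zero iff it is perfect, so the reformulation is exact.

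\emph{($\Rightarrow$)} Take $(X,\chi)\in\mathcal{B}^G$. Then $\mathsf{For}\,\mathbb{L}(\mathsf{H}^G)(X,\chi)\simeq\mathbb{L}\mathsf{H}(X)$, which is perfect. The complex version of Lemma~\ref{lem:pd_equiv}(ii), with $|G|$ invertible, then shows $\mathbb{L}(\mathsf{H}^G)(X,\chi)$ is perfect in $\mathsf{D^b}(\mathcal{A}^G)$. The complex version holds since a complex $Y$ in $\mathsf{D^b}(\mathcal{A}^G)$ is perfect iff $\mathsf{Hom}(Y,Z[n])=0$ for $n\gg0$ uniformly in $Z\in\mathcal{A}^G$. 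Alternatively, $Y$ is a retract of $\mathsf{Ind}\,\mathsf{For}\,Y$, and $\mathsf{Ind}$ preserves perfect complexes; I will use this retract argument.

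\emph{($\Leftarrow$)} Take $X\in\mathcal{B}$. Using $\mathbb{L}(\mathsf{H}^G)\circ\mathsf{Ind}\simeq\mathsf{Ind}\circ\mathbb{L}\mathsf{H}$, which holds because $\mathsf{Ind}$ is exact, preserves projectives, and $\mathsf{H}^G\mathsf{Ind}\simeq\mathsf{Ind}\,\mathsf{H}$, we find that $\mathsf{Ind}\,\mathbb{L}\mathsf{H}(X)$ is perfect. Applying $\mathsf{For}$, the complex $\bigoplus_g {^g(\mathbb{L}\mathsf{H}(X))}$ is perfect, and $\mathbb{L}\mathsf{H}(X)$ is a direct summand of it. Since $\mathsf{K^b}(\Proj\mathcal{A})$ is thick in $\mathsf{D^b}(\mathcal{A})$, $\mathbb{L}\mathsf{H}(X)$ is perfect. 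This direction does not even need $|G|$ invertible.

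\emph{Main obstacle.} The key technical point is the commutation isomorphisms $\mathsf{For}\circ\mathbb{L}(\mathsf{H}^G)\simeq\mathbb{L}\mathsf{H}\circ\mathsf{For}$ and $\mathbb{L}(\mathsf{H}^G)\circ\mathsf{Ind}\simeq\mathsf{Ind}\circ\mathbb{L}\mathsf{H}$. I would derive them from Proposition~\ref{prop: derived_commutation_K} together with the componentwise description of projective resolutions, checking that the resolutions are compatible.
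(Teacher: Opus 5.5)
Your proof is correct, but it takes a different route from the paper's. The paper transports everything through the comparison functors. It uses $\mathsf{D^b}(\mathcal{B}^G)\simeq\mathsf{D^b}(\mathcal{B})^G$, $\mathsf{K^b}(\Proj\mathcal{A}^G)\simeq\mathsf{K^b}(\Proj\mathcal{A})^G$ and Proposition~\ref{prop: derived_commutation_K}. With these, existence and the implication ($\Rightarrow$) become formal: the equivariant functor $(\mathbb{L}\mathsf{H})^G$ automatically preserves the equivariant versions of the $G$-invariant subcategories that $\mathbb{L}\mathsf{H}$ preserves. Only ($\Leftarrow$) uses $\mathsf{Ind}$, $\mathsf{For}$ and thickness, as you do; the paper applies $\mathsf{Ind}$ inside $\mathsf{D^b}(\mathcal{B})^G$, while you apply it before deriving.

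You instead stay inside the honest derived categories $\mathsf{D^b}(\mathcal{B}^G)$ and $\mathsf{D^b}(\mathcal{A}^G)$ and reduce to stalk complexes. In place of the comparison equivalences you use Lemma~\ref{lem:pd_equiv}(ii) and the fact that $Y$ is a retract of $\mathsf{Ind}\,\mathsf{For}\,Y$ in $\mathsf{D^b}(\mathcal{A}^G)$. That retraction deserves one line of justification. The unit $\mathsf{Id}\to\mathsf{Ind}\,\mathsf{For}$ of $\mathsf{For}\dashv\mathsf{Ind}$, followed by the counit $\mathsf{Ind}\,\mathsf{For}\to\mathsf{Id}$ of $\mathsf{Ind}\dashv\mathsf{For}$, is multiplication by $|G|$ on $\mathcal{A}^G$. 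Applying these natural transformations termwise gives maps in $\mathsf{D^b}(\mathcal{A}^G)$ whose composite is invertible.

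You also do not need Proposition~\ref{prop: derived_commutation_K} for the commutation isomorphisms. They follow directly because $\mathsf{For}$ and $\mathsf{Ind}$ are exact and preserve projectives, together with $\mathsf{For}\circ\mathsf{H}^G=\mathsf{H}\circ\mathsf{For}$ and $\oplus\sigma_g\colon\mathsf{H}^G\circ\mathsf{Ind}\simeq\mathsf{Ind}\circ\mathsf{H}$. Routing through that proposition would needlessly reintroduce invertibility of $|G|$.

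Your route buys two things. It avoids the triangulated structure on $\mathsf{D^b}(\mathcal{A})^G$ and the idempotent-completeness inputs behind the comparison equivalences. It also isolates where invertibility of $|G|$ is used: only in existence and in ($\Rightarrow$). Your ($\Leftarrow$) argument works for arbitrary finite $G$, which refines the comment in Remark~\ref{rem: converse without |G|}. The paper's route is more formal: the $|G|$-invertibility and retract arguments are packaged once into the comparison equivalences, so it needs no reduction to stalk complexes.
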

\begin{proof}
    We first verify that $\mathbb{L}_{\mathsf{sg}}(\mathsf{H}^G)$ is well-defined. The existence of $\mathbb{L}_{\mathsf{sg}}\mathsf{H}$ means that $\mathbb{L}\mathsf{H}$ maps $\mathsf{K^b}(\Proj \mathcal{B})$ into $\mathsf{K^b}(\Proj \mathcal{A})$ and $\mathsf{D^b}(\mathcal{B})$ into $\mathsf{D^b}(\mathcal{A})$. This implies that $(\mathbb{L}\mathsf{H})^G$ maps $\mathsf{K^b}(\Proj \mathcal{B})^G$ into $\mathsf{K^b}(\Proj \mathcal{A})^G$ and $\mathsf{D^b}(\mathcal{B})^G$ into $\mathsf{D^b}(\mathcal{A})^G$. By Proposition~\ref{prop: derived_commutation_K} we have that $(\mathbb{L}\mathsf{H}^G)$ maps $\mathsf{K^b}(\Proj \mathcal{B}^G)$ into $\mathsf{K^b}(\Proj \mathcal{A}^G)$ and $\mathsf{D^b}(\mathcal{B}^)$ into $\mathsf{D^b}(\mathcal{A}^G)$.

    Now, assume $\mathbb{L}_{\mathsf{sg}}\mathsf{H} \simeq 0$. This means $\mathbb{L}\mathsf{H}$ maps $\mathsf{D^b}(\mathcal{B})$ into $\mathsf{K^b}(\Proj \mathcal{A})$. Because $\mathbb{L}\mathsf{H}$ is a $G$-functor, its induced equivariant functor $(\mathbb{L}\mathsf{H})^G$ maps $\mathsf{D^b}(\mathcal{B})^G$ into $\mathsf{K^b}(\Proj \mathcal{A})^G$. By Proposition \ref{prop: derived_commutation_K} and because the comparison functors restrict to equivalences on the bounded homotopy categories (see eq.~(\ref{eq:comparison_projectives})), we infer that $\mathbb{L}(\mathsf{H}^G)$ maps $\mathsf{D^b}(\mathcal{B}^G)$ into $\mathsf{K^b}(\Proj \mathcal{A}^G)$. 
    
    Conversely, assume $\mathbb{L}_{\mathsf{sg}}(\mathsf{H}^G) \simeq 0$, that is $\mathbb{L}(\mathsf{H}^G)$ maps $\mathsf{D^b}(\mathcal{B}^G)$ into $ \mathsf{K^b}(\Proj \mathcal{B}^G)$. This is equivalent to $(\mathbb{L}\mathsf{H})^G$ mapping $\mathsf{D}(\mathcal{B})^G$ into $\mathsf{K^b}(\Proj \mathcal{B})^G$. Let $X^\bullet \in \mathsf{D^b}(\mathcal{B})$. We apply $\mathsf{Ind}\colon \mathsf{D^b}(\mathcal{B}) \to \mathsf{D^b}(\mathcal{B})^G$ to obtain $\mathsf{Ind}(X^\bullet) \in \mathsf{D^b}(\mathcal{B})^G$. By assumption, $(\mathbb{L}\mathsf{H})^G(\mathsf{Ind}(X^\bullet)) \in \mathsf{K^b}(\Proj \mathcal{A})^G$. Applying the forgetful functor $\mathsf{For}\colon \mathsf{D^b}(\mathcal{A})^G \to \mathsf{D^b}(\mathcal{A})$, which restricts to the bounded homotopy categories, yields that 
    $\mathsf{For}((\mathbb{L}\mathsf{H})^G(\mathsf{Ind}(X^\bullet))) \in \mathsf{K^b}(\Proj \mathcal{A})$.
    Since $\mathbb{L}\mathsf{H}$ commutes with the induction and forgetful functors we have the following:
    \[
    \mathsf{For}((\mathbb{L}\mathsf{H})^G(\mathsf{Ind}(X^\bullet))) \simeq \mathbb{L}\mathsf{H}(\mathsf{For}(\mathsf{Ind}(X^\bullet))) \simeq \mathbb{L}\mathsf{H}\big(\bigoplus_{g \in G} {^g \! X^\bullet}\big) \simeq \bigoplus_{g \in G} \mathbb{L}\mathsf{H}({^g \! X^\bullet}).
    \]
    Since this direct sum belongs to $\mathsf{K^b}(\Proj \mathcal{A})$, which is thick, and $\mathbb{L}\mathsf{H}(X^\bullet)$ is a direct summand, it must also belong to $\mathsf{K^b}(\Proj \mathcal{A})$. Therefore, $\mathbb{L}_{\mathsf{sg}}\mathsf{H} \simeq 0$.
\end{proof}

We arrive at our main result which lifts singular equivalences in the context of equivariant cleft extensions.

\begin{thm} \label{thm: main B}
    Let $(\mathcal{B},\mathcal{A},\mathsf{i},\mathsf{e},\mathsf{l})$ be a $G$-equivariant cleft extension of abelian categories with enough projectives and $|G|$ invertible in $\mathcal{A}$. Assume that conditions \textnormal{(a)-(c)} of Theorem \ref{thm: equivalence_singularity} are satisfied. The following are equivalent:
    \begin{itemize}
        \item[\textnormal{(i)}] The functor $\mathsf{e} \colon \mathsf{D}_{\mathsf{sg}}(\mathcal{A}) \to \mathsf{D}_{\mathsf{sg}}(\mathcal{B})$ is an equivalence.
        \item[\textnormal{(ii)}] The functor $\mathsf{e}^G \colon \mathsf{D}_{\mathsf{sg}}(\mathcal{A}^G) \to \mathsf{D}_{\mathsf{sg}}(\mathcal{B}^G) $ is an equivalence.
    \end{itemize}
\end{thm}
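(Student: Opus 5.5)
The plan is to reduce both (i) and (ii) to the vanishing criterion of Theorem~\ref{thm: equivalence_singularity}, and then compare the two vanishing statements using Theorem~\ref{thm: singularity_vanishing}.

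First, by Proposition~\ref{prop:equivariant_conditions}(2), conditions (a)--(c) hold for $(\mathcal{B},\mathcal{A},\mathsf{i},\mathsf{e},\mathsf{l})$ if and only if they hold for the equivariant cleft extension $(\mathcal{B}^G,\mathcal{A}^G,\mathsf{i}^G,\mathsf{e}^G,\mathsf{l}^G)$. The latter is a genuine cleft extension of abelian categories with enough projectives by Proposition~\ref{prop:equivariant_cleft_properties}(ii) together with the remarks in Section~\ref{sec: projectives injectives}. We may therefore apply Theorem~\ref{thm: equivalence_singularity} on both levels:
\begin{itemize}
\item (i) holds if and only if $\mathbb{L}_{\mathsf{sg}}\mathsf{F}\simeq 0$ in $\mathsf{D_{sg}}(\mathcal{B})$;
\item (ii) holds if and only if $\mathbb{L}_{\mathsf{sg}}(\mathsf{F}')\simeq 0$ in $\mathsf{D_{sg}}(\mathcal{B}^G)$, where $\mathsf{F}'$ is the endofunctor attached to the equivariant cleft extension.
\end{itemize}

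Next, identify $\mathsf{F}'$ with $\mathsf{F}^G$. By Proposition~\ref{prop:equivariant_cleft_properties}(i), $\mathsf{G}$ and $\mathsf{F}\simeq\mathsf{e}\mathsf{G}\mathsf{i}$ are $G$-functors. The counit of $(\mathsf{l}^G,\mathsf{e}^G)$ is $\e^G$ by Lemma~\ref{adjointequiv}, so $\Ker(\e^G)\simeq(\Ker\e)^G=\mathsf{G}^G$ by Lemma~\ref{lem:ker_cok_G_functor}. Then Remark~\ref{compositionGfunctors} gives $\mathsf{F}'=\mathsf{e}^G\mathsf{G}^G\mathsf{i}^G=(\mathsf{e}\mathsf{G}\mathsf{i})^G\simeq\mathsf{F}^G$.

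Finally, apply Theorem~\ref{thm: singularity_vanishing} with $\mathsf{H}=\mathsf{F}$, which is right exact as the functor of the associated $\eta$-extension. Its hypotheses hold as follows:
\begin{itemize}
\item $|G|$ is invertible in $\mathcal{B}$, because it is invertible in $\mathcal{A}$ and $\mathcal{B}$ embeds fully faithfully via $\mathsf{i}$.
\item $\mathbb{L}_{\mathsf{sg}}\mathsf{F}$ exists: condition (b) makes $\mathbb{L}\mathsf{F}$ preserve bounded complexes, and condition (a) sends $\mathsf{K^b}(\Proj\mathcal{B})$ into $\mathsf{K^b}(\Proj\mathcal{B})$, since a bounded complex of objects of finite projective dimension lies in the thick subcategory $\mathsf{K^b}(\Proj\mathcal{B})$.
\end{itemize}
Theorem~\ref{thm: singularity_vanishing} then gives $\mathbb{L}_{\mathsf{sg}}\mathsf{F}\simeq0$ if and only if $\mathbb{L}_{\mathsf{sg}}(\mathsf{F}^G)\simeq0$. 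Combined with the two reductions in the first step, this yields (i)$\iff$(ii).

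The main obstacle is the identification $\mathsf{F}'\simeq\mathsf{F}^G$ in a way compatible with derived functors. We need $\mathbb{L}(\mathsf{F}')\simeq\mathbb{L}(\mathsf{F}^G)$, and the triangle functor actually induced by $\mathsf{e}^G$ on singularity categories must be the one produced from $\mathsf{e}$ via equivariantization. This is a matter of checking that the $G$-structures provided by Lemma~\ref{adjointequiv} and Lemma~\ref{lem:ker_cok_G_functor} are coherent, so that the natural isomorphism is an isomorphism of right exact functors. Once that holds, the isomorphism passes to left derived functors and the rest is bookkeeping.
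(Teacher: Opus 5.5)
Your proposal is correct and follows the paper's proof. You transfer (a)--(c) via Proposition~\ref{prop:equivariant_conditions}, apply Theorem~\ref{thm: equivalence_singularity} to both the underlying and the equivariant cleft extension, and compare the vanishing of $\mathbb{L}_{\mathsf{sg}}\mathsf{F}$ and $\mathbb{L}_{\mathsf{sg}}(\mathsf{F}^G)$ through Theorem~\ref{thm: singularity_vanishing}. You also make explicit several points the paper leaves implicit: that the equivariant endofunctor is $\mathsf{F}^G$, that $\mathbb{L}_{\mathsf{sg}}\mathsf{F}$ exists, and that $|G|$ is invertible in $\mathcal{B}$. The ``main obstacle'' you flag is harmless, for two reasons: any natural isomorphism $\mathsf{F}'\simeq\mathsf{F}^G$ of additive functors already induces an isomorphism of their left derived functors, so no coherence of $G$-structures is needed; and statement (ii) concerns the functor induced by $\mathsf{e}^G$ directly, so nothing has to be matched there.
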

\begin{proof}
    By Proposition \ref{prop:equivariant_conditions}, $\mathsf{F}$ and $\mathsf{e}$ satisfy conditions (a)-(c) if and only if $\mathsf{F}^G$ and $\mathsf{e}^G$ satisfy the same conditions on the equivariant categories. Thus, we may apply Theorem \ref{thm: equivalence_singularity} to both the underlying and the equivariant cleft extensions.
    Consequently, $\mathsf{e}$ is a singular equivalence if and only if $\mathbb{L}_{\mathsf{sg}}\mathsf{F} \simeq 0$ in $\mathsf{D}_{\mathsf{sg}}(\mathcal{B})$. Similarly, $\mathsf{e}^G$ is a singular equivalence if and only if $\mathbb{L}_{\mathsf{sg}}\mathsf{F}^G \simeq 0$ in $\mathsf{D}_{\mathsf{sg}}(\mathcal{B}^G)$. The result follows from Theorem~\ref{thm: singularity_vanishing}. 
\end{proof}

\begin{rem} \label{rem: converse without |G|}
    We observe that a version of Remark~\ref{rem:modular_gorenstein} holds in this case also. In particular, assuming that $|G|$ is not invertible in $\mathcal{B}$ (thus neither in $\mathcal{A}$), if conditions (i)-(iv) of Proposition~\ref{cleft_of_singularity} and (a)-(c) of Theorem~\ref{thm: equivalence_singularity} hold for $(\mathcal{B}^G,\mathcal{A}^G,\mathsf{i}^G,\mathsf{e}^G,\mathsf{l}^G)$, then they also hold for $(\mathcal{B}, \mathcal{A},\mathsf{i},\mathsf{e},\mathsf{l})$. Note that the implication needed to prove (c) using Proposition~\ref{finitenessoflocaldimension} holds. However, Theorem~\ref{thm: singularity_vanishing} fails and thus also Theorem~\ref{thm: main B}. Nevertheless, assuming that $\pd _{\mathcal{B}^G}\mathsf{F}^G(X, \chi) < \infty $ for all $(X,\chi)\in \mathcal{B}^G$, it implies that $\pd _\mathcal{B} \mathsf{F}(X) < \infty$ for all $X \in \mathcal{B}$ and thus by Remark~\ref{rem: perfect nilpotent pdimF(X)}, we have that both $\mathsf{e}$ and $\mathsf{e}^G$ are singular equivalences.
\end{rem}

\section{Singular Equivalences of Skew Group $\theta$-extensions}
\label{sec: application}

In this section, we apply the theory developed in the previous chapters to obtain a structural result regarding the singularity categories of skew group rings of $\theta$-extensions. Note that $|G|$ is invertible in a ring $\Gamma$ if and only if it is invertible in $\Mod \Gamma$, see \cite[Remark~6.14]{kkp}. Recall that, for a left noetherian ring $\Gamma$, the category of finitely generated modules $\smod \Gamma$ is abelian, and the singularity category $\mathsf{D_{sg}}(\Gamma)$ is defined as the Verdier quotient $\mathsf{D^b}(\smod \Gamma) / \mathsf{K^b}(\proj \Gamma)$. Recall also that if $\Gamma$ is noetherian, then $\Gamma \# G$ is also noetherian.

\begin{thm} \label{thm: main application}
    Let $\Gamma$ be a left noetherian ring equipped with an action of a finite group $G$ by ring automorphisms, such that $|G|$ is invertible in $\Gamma$. Let $\Lambda = \Gamma \! \ltimes_\theta \! M$ be a $G$-equivariant $\theta$-extension and assume the following hold:
    \begin{itemize}
        \item[\textnormal{(a)}] $\fd M_\Gamma < \infty$,
        \item[\textnormal{(b)}] $\pd {_\Gamma M} <\infty$,
        \item[\textnormal{(c)}] $\Tor^\Gamma_i (M ^{\otimes j}, M)=0$ for all $i,j \geq 0$,
        \item[\textnormal{(d)}] $M$ is nilpotent \textnormal{(}i.e.\ $M^{\otimes n} =0$ for some $n \geq 1$\textnormal{)}.
    \end{itemize}
    Then the following statements are equivalent:
    \begin{itemize}
        \item[\textnormal{(i)}] The restriction functor $\mathsf{e} \colon \mathsf{D}_{\mathsf{sg}}(\Lambda) \xrightarrow{} \mathsf{D}_{\mathsf{sg}}(\Gamma)$ is an equivalence.
        \item[\textnormal{(ii)}] The restriction functor $\mathsf{e}\#G \colon \mathsf{D}_{\mathsf{sg}}(\Lambda \# G) \xrightarrow{} \mathsf{D}_{\mathsf{sg}}(\Gamma \# G)$ is an equivalence.
    \end{itemize}
    Here, $\mathsf{e}\#G$ denotes the induced restriction functor of the $\widehat{\theta}$-extension $(\Gamma \# G) \ltimes_{\widehat{\theta}} (M \# G) \cong \Lambda \# G$.
\end{thm}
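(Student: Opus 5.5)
The plan is to deduce the statement from Theorem~\ref{thm: main B}, applied to the $G$-equivariant cleft extension of Corollary~\ref{cor:lifting_theta_to_cleft}. Since the singularity categories here are built from finitely generated modules, I work with $\smod$ throughout. $\Gamma$ is left noetherian and $M$ is finitely generated, so $\Lambda = \Gamma \ltimes_\theta M$ is left noetherian as well; Lemma~\ref{transfer of action for theta extensions} gives the $G$-action on $\Lambda$. The restriction functors $\mathsf{i},\mathsf{e}$ and the induction $\mathsf{l}=\Lambda\otimes_\Gamma -$ preserve finite generation, so the cleft extension $(\Mod\Gamma,\Mod\Lambda,\mathsf{i},\mathsf{e},\mathsf{l})$ restricts to one on $(\smod\Gamma,\smod\Lambda)$. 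Both categories have enough projectives, and the strict $G$-actions restrict because twisting preserves finite generation. The proof of Corollary~\ref{cor:lifting_theta_to_cleft} then goes through verbatim, using the restricted isomorphism $\smod R\#G\cong(\smod R)^G$ of Lemma~\ref{lem:skew group equivariant iso}. Consequently $\mathsf{D_{sg}}(\Lambda\#G)\to\mathsf{D_{sg}}(\Gamma\#G)$, induced by $\mathsf{e}\#G$, is identified with $\mathsf{e}^G\colon \mathsf{D_{sg}}((\smod\Lambda)^G)\to \mathsf{D_{sg}}((\smod\Gamma)^G)$. Finally, $|G|$ invertible in $\Gamma$ gives invertibility in $\Lambda$, since $\Gamma\to\Lambda$ is a unital ring map; by \cite[Remark~6.14]{kkp} it is then invertible in $\smod\Lambda$.

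It remains to verify conditions (a)--(c) of Theorem~\ref{thm: equivalence_singularity} for the underlying cleft extension. Here $\mathsf{F}\simeq M\otimes_\Gamma -$, and hypothesis (d) says $\mathsf{F}$ is nilpotent. I want $\mathsf{F}$ to be perfect in the sense of \cite[Definition~3.1]{kostas}, so that Remark~\ref{rem: perfect nilpotent pdimF(X)} (i.e.\ \cite[Corollary~6.13]{kostas}) yields (a)--(c) and also conditions (i)--(iv) of Proposition~\ref{cleft_of_singularity}.

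\begin{itemize}
\item Perfectness requires $\mathbb{L}_n\mathsf{F}=0$ for large $n$. This is $\Tor^\Gamma_n(M,-)=0$ for $n>\fd M_\Gamma$, which is hypothesis (a).
\item It requires $\pd\mathsf{F}(P)<\infty$ for projective $P$. Since $\mathsf{F}(\Gamma)={_\Gamma M}$, this follows from hypothesis (b), passing to summands of finite sums.
\item It requires Tor-vanishing of $M$ against the iterates $\mathsf{F}^j(P)=M^{\otimes j}\otimes_\Gamma P$. This is hypothesis (c), after the usual dimension shift along the Tor spectral sequence.
\end{itemize}

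I expect the main obstacle to be matching hypotheses (a)--(c) precisely with Kostas' definition of perfect endofunctor in this noetherian, finitely generated setting. In particular, I need to check that (a) gives exactly the required vanishing of $\mathbb{L}_n\mathsf{F}$ on $\mathsf{D^b}(\smod\Gamma)$, and not only on modules. The first thing to try is to cite the module-theoretic version \cite[Corollary~6.13]{kostas} directly, since its hypotheses are of exactly this Tor/flat-dimension form.

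Once (a)--(c) hold, Theorem~\ref{thm: main B} applies and gives (i)$\iff$(ii), via the identification of $\mathsf{e}^G$ with $\mathsf{e}\#G$ from the first paragraph.
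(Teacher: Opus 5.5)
Your proposal is correct and follows essentially the same route as the paper: restrict the cleft extension to $\smod$, lift it via Corollary~\ref{cor:lifting_theta_to_cleft}, obtain conditions (a)--(c) of Theorem~\ref{thm: equivalence_singularity} from $\mathsf{F}\simeq M\otimes_\Gamma -$ being perfect and nilpotent (Remark~\ref{rem: perfect nilpotent pdimF(X)}), and conclude with Theorem~\ref{thm: main B}. The only real difference is that you plan to match hypotheses (a)--(d) with Kostas' definition of a perfect endofunctor by hand, whereas the paper simply cites \cite[Lemma~3.17]{kostas}, which states that (a)--(d) are equivalent to $\mathsf{F}$ being perfect and nilpotent; like you, the paper also tacitly uses that ${_\Gamma M}$ is finitely generated, so that everything restricts to $\smod$.
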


\begin{proof}
    Recall that the $\theta$-extension $\Lambda = \Gamma \ltimes_\theta M$ induces the cleft extension $(\Mod \Gamma, \Mod \Lambda, \mathsf{i}, \mathsf{e}, \mathsf{l})$ where the endofunctor $\mathsf{F} \colon \Mod \Gamma \to \Mod \Gamma$ is naturally isomorphic to $M \otimes_\Gamma -$. The ring $\Gamma$ being noetherian means that the cleft extension restricts to the cleft extension $(\smod \Gamma, \smod \Lambda, \mathsf{i}, \mathsf{e}, \mathsf{l})$. By \cite[Lemma~3.17]{kostas}, conditions (a)-(d) are equivalent to $\mathsf{F}$ being perfect \cite[Definition~3.1]{kostas} and nilpotent. Then conditions (a)-(c) of Theorem~\ref{thm: equivalence_singularity} hold, as explained in Remark~\ref{rem: perfect nilpotent pdimF(X)}. The rest follow from Theorem~\ref{thm: main A}, Corollary~\ref{cor:lifting_theta_to_cleft} and Theorem~\ref{thm: main B}.
\end{proof}

In particular, this theorem implies the second part of \textbf{Main Theorem}~\ref{main theorem}, because if $\Gamma$ is two-sided noetherian and $M$ is finitely generated on both sides, we have that $\pd{M_\Gamma} = \fd{M_\Gamma}$.

\begin{cor} \label{cor: nilpotent_application}
    Let $\Gamma$ be a two-sided noetherian ring and $M$ a $\Gamma$-bimodule that is finitely generated on both sides. Assume there is a $G$-equivariant $\theta$-extension $\Lambda = \Gamma \! \ltimes_\theta \! M$, where $G$ acts on $\Gamma$ by ring automorphisms and $|G|$ is invertible in $\Gamma$. If $M$ is nilpotent and projective as a bimodule then the following are equivalent:
    \begin{itemize}
       \item[\textnormal{(i)}] The restriction functor $\mathsf{e} \colon \mathsf{D}_{\mathsf{sg}}(\Lambda) \xrightarrow{} \mathsf{D}_{\mathsf{sg}}(\Gamma)$ is an equivalence.
        \item[\textnormal{(ii)}] The restriction functor $\mathsf{e}\#G \colon \mathsf{D}_{\mathsf{sg}}\big((\Gamma \# G)\ltimes_{\widehat{\theta}} (M \# G)\big) \xrightarrow{} \mathsf{D}_{\mathsf{sg}}(\Gamma \# G)$ is an equivalence.
    \end{itemize}
\end{cor}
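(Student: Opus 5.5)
The plan is to deduce Corollary~\ref{cor: nilpotent_application} directly from Theorem~\ref{thm: main application}. We check that its hypotheses (a)--(d) hold under the present assumptions; the equivalence of (i) and (ii) then follows verbatim. Since $\Gamma$ is two-sided noetherian, it is in particular left noetherian. The group $G$ acts by ring automorphisms with $|G|$ invertible, and the $\theta$-extension is $G$-equivariant, so the standing hypotheses of Theorem~\ref{thm: main application} are met. Nilpotency of $M$ is exactly condition (d).

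The substantive step is to extract one-sided homological information from the assumption that $M$ is projective as a bimodule, that is, projective over $\Gamma\otimes_{\mathbb{Z}}\Gamma^{\mathrm{op}}$ (or $\Gamma^{e}$ over a base ring). Any projective bimodule is a direct summand of a free bimodule $(\Gamma\otimes\Gamma)^{(I)}$. As a left $\Gamma$-module, $\Gamma\otimes\Gamma$ is $\Gamma\otimes_{\mathbb Z}\Gamma$, which is a sum of copies of $\Gamma$ whenever the base is a field (or $\Gamma$ is projective over the base); similarly on the right. Hence ${}_\Gamma M$ and $M_\Gamma$ are projective, which gives (b) and also (a), since $\fd M_\Gamma\le \pd M_\Gamma=0$. (Alternatively, (a) follows from (b)'s right-sided analogue together with finite generation, because $\pd M_\Gamma=\fd M_\Gamma$ over a noetherian ring.) I would state the base ring convention explicitly, taking $\Gamma$ to be an algebra over a field or, more generally, projective over a commutative base $k$, since this is what the paper's main application intends.

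For the Tor-vanishing (c), I would show that each $M^{\otimes j}$ is projective as a right $\Gamma$-module. For $j=0$ the module $M^{\otimes 0}=\Gamma$ is projective. For $j\ge1$, $M^{\otimes j}=M^{\otimes (j-1)}\otimes_\Gamma M$ is a direct summand of sums of $M^{\otimes(j-1)}\otimes_\Gamma(\Gamma\otimes\Gamma)=M^{\otimes(j-1)}\otimes_k\Gamma$. As a right $\Gamma$-module this is a sum of copies of $\Gamma$ over a field base, so it is projective. Consequently $\Tor_i^\Gamma(M^{\otimes j},M)=0$ for all $i\ge1$. The case $i=0$ in the statement of (c) must be read as the higher Tor groups only, as in \cite{kostas}, since $\Tor_0=M^{\otimes(j+1)}$ is not literally zero. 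With (a)--(d) verified, Theorem~\ref{thm: main application} yields the claim.

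The main obstacle is the passage from bimodule projectivity to one-sided projectivity, which depends on the base ring over which $\Gamma\otimes\Gamma^{\mathrm{op}}$ is formed. I would handle this by working over a field, as in the Main Theorem, or by assuming $\Gamma$ is $k$-projective. Alternatively, when $\pd{}_\Gamma M_\Gamma<\infty$, I would cite \cite[Lemma~3.5]{Dalezios}, as the introduction does, to obtain the one-sided finiteness and Tor-vanishing directly.
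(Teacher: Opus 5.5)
Your proposal is correct and is essentially the paper's argument: reduce to Theorem~\ref{thm: main application}, using bimodule projectivity to get one-sided projectivity for (a) and (b), flatness for the higher Tor vanishing in (c), and nilpotency for (d); the paper simply invokes flatness of $M$ for (c), and flatness of ${}_\Gamma M$ alone suffices, so your detour through right projectivity of $M^{\otimes j}$ is correct but unnecessary. Your two caveats are accurate and left implicit in the paper: passing from bimodule to one-sided projectivity needs $\Gamma\otimes\Gamma^{\mathrm{op}}$ to be projective on each side (automatic over a field), and (c) must be read for $i\geq 1$.
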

\begin{proof}
    Because $M$ is projective as a bimodule,
    it satisfies conditions (a) and (b) of Theorem~\ref{thm: main application}. The higher Tor vanishing condition (c) holds since $M$ is flat on both sides.
\end{proof}

\begin{exmp} \label{ex: cross action tensor}
    We present a concrete application of Corollary~\ref{cor: nilpotent_application} where the singularity category is non-trivial. Let $k$ be a field of characteristic different from $2$, and let $G = \mathbb{Z}/2\mathbb{Z}$. 
    
    Let $A = k[x]/(x^2)$. Because $A$ has infinite global dimension, its singularity category $\mathsf{D_{sg}}(A)$ is non-trivial. Define the base ring as the direct product $\Gamma = A \times A \times A$. The singularity category of $\Gamma$ is non-trivial since $\mathsf{D_{sg}}(\Gamma) \simeq \mathsf{D_{sg}}(A) \times \mathsf{D_{sg}}(A) \times \mathsf{D_{sg}}(A)$. We define the $G$-action on $\Gamma$ to swap the first two components and fix the third: ${^g(r_1, r_2, r_3)} = (r_2, r_1, r_3)$. 

    Let $M = (A \otimes_k A) \oplus (A \otimes_k A)$ as a $k$-vector space. We endow $M$ with a $\Gamma$-bimodule structure where the left action operates on the first two components, and the right action operates only on the third:
    \[
    (r_1, r_2, r_3) \cdot (x_1 \otimes y_1, x_2 \otimes y_2) \cdot (s_1, s_2, s_3) \coloneqq (r_1 x_1 \otimes y_1 s_3, r_2 x_2 \otimes y_2 s_3).
    \]
    The bimodule $M$ is obviously projective. Moreover, $M^{\otimes 2} =0$ because for $e_3 = (0,0,1) \in \Gamma$ and any $m, n \in M$ we have that 
    \[
    m \otimes n = (m \cdot e_3) \otimes n = m \otimes (e_3 \cdot n) = m \otimes 0.
    \]
    We define the $G$-action on $M$ by ${^g(x_1 \otimes y_1, x_2 \otimes y_2)} = (x_2 \otimes y_2, x_1 \otimes y_1)$. This action is compatible with the group action on $\Gamma$, making the trivial extension $\Lambda = \Gamma \! \ltimes \! M$ a $G$-equivariant one. 
    

        
    Because $M$ is both nilpotent and projective as a bimodule, by Corollary~\ref{cor: nilpotent_application}, the restriction functor induces a singular equivalence $\mathsf{D_{sg}}(\Lambda) \xrightarrow{\simeq} \mathsf{D_{sg}}(\Gamma)$ if and only if the restriction functor of the skew group rings induces a singular equivalence $\mathsf{D}_{\mathsf{sg}}(\Lambda \# G) \xrightarrow{\simeq} \mathsf{D}_{\mathsf{sg}}(\Gamma \# G)$.
    Moreover, since $\pd {_\Gamma M _\Gamma} = 0$ we have that these singular equivalences exist by \cite[Lemma~3.5]{Dalezios} and \cite[Corollary~6.13]{kostas}.
\end{exmp}

\begin{exmp} \label{ex: equivariant tensor ring}
    We return to a special case of Example~\ref{ex: tensor rings}.
    Let $M$ be a nilpotent $\Gamma$-bimodule, such that $M^{\otimes n+1} = 0$ for some integer $n \ge 1$. Consider the (truncated) tensor ring:
    \[
    T_\Gamma(M) = \Gamma \oplus M \oplus M^{\otimes 2} \oplus \dots \oplus M^{\otimes n}.
    \]
    which is the $\theta$-extension $\Gamma \ltimes_\theta N$, where $N = \oplus_{i=1}^n M^{\otimes i}$. Assuming that $G$ acts on $\Gamma$ by ring automorphisms and $M$ is $G$-compatible, then $T_\Gamma(M) \# G \simeq T_{\Gamma\#G}(M\# G)$.

    Assume now that $\Gamma$ is a finite dimensional algebra over a field $k$ and $M$ is finitely generated on both sides. Then $T_\Gamma(M)$ is also a finite dimensional algebra over $k$ and $N$ is finitely generated on both sides.
    Assume further that $|G|$ is invertible in $k$, $\pd {_\Gamma M} < \infty$, $\pd {M _\Gamma} < \infty$, and $\Tor^\Gamma_i (M^{\otimes j} , M)=0$ for all $i,j \geq 0$. By \cite[Lemma~4.5]{ChenLu}, we have that $\pd{_\Gamma N}< \infty$,  $\pd{N_{\Gamma}}< \infty$ and $\Tor^{\Gamma}_i (N^{\otimes j} , N)=0$ for all $i,j \geq 0$. 
    By Theorem~\ref{thm: main application}, the restriction functor induces an equivalence $\mathsf{D_{sg}}(T_\Gamma(M)) \to \mathsf{D_{sg}}(\Gamma) $ if and only if the corresponding restriction functor induces an equivalence $  \mathsf{D_{sg}}(T_{\Gamma \#G}(M \# G)) \to \mathsf{D_{sg}}(\Gamma \#G)$.

    If we assume that $\pd{ _\Gamma M _\Gamma} < \infty$, then $\pd{ _\Gamma N _\Gamma} < \infty$ and we have that these are both singular equivalences by \cite[Lemma~3.5]{Dalezios} and \cite[Corollary~6.13]{kostas}.
\end{exmp}


\begin{thebibliography}{99}

\bibitem{balmer_schlichting} 
    \textsc{P.~Balmer, M.~Schlichting}, \emph{Idempotent completion of triangulated categories}, J.\ Algebra 236 (2001), no.\ 2, 819–834.

\bibitem{beligiannis}
   \textsc{A.~Beligiannis}, \emph{Cleft extensions of abelian categories and applications to ring theory}, Comm.\ Algebra 28 (2000), no.\ 10, 4503–4546.

\bibitem{beligiannis2}
    \textsc{A.~Beligiannis}, \emph{On the Relative Homology of Cleft Extensions of Rings and Abelian Categories}, J.\ Pure Appl.\ Algebra 150, (2000), pp.\ 237-299.

\bibitem{BeckOber}
\textsc{T.~Beckmann, G.~Oberdieck}, {\em On equivariant derived categories},  Eur.\ J.\ Math.\ 9 (2023), no.\ 2, Paper No.\ 36, 39 pp.

\bibitem{bergh_oppermann}
    \textsc{P.A.~Bergh, S.~Oppermann}, \emph{Cohomology of twisted tensor products}, J.\ Algebra 320 (2008), no.\ 8, 3327–3338.
    
\bibitem{buchweitz}
     \textsc{R.-O.~Buchweitz}, \emph{Maximal Cohen–Macaulay modules and Tate-cohomology over Gorenstein rings}, volume 262 of Mathematical Surveys and Monographs. American Mathematical Society, Providence, RI, [2021] ©2021.



\bibitem{XiaoWuChen2}
\textsc{X.W.~Chen}, {\em A note on separable functors and monads with an application to equivariant derived categories}, Abh.\ Math.\ Semin.\ Univ.\ Hambg.\ 85 (2015), no.\ 1, 43--52.

\bibitem{ChenWang}
\textsc{X.W.~Chen, R.~Wang},
{\em Preprojective algebras, skew group algebras and Morita equivalences}, J.\ Pure Appl.\ Algebra 229 (2025), no.~12, Paper No. 108141, 24 pp.

\bibitem{ChenLu}
\textsc{X.~W.~Chen, M.~Lu}, {\em Gorenstein homological properties of tensor rings}, Nagoya Math.\ J.\ {\bf 237} (2020), 188--208.

\bibitem{Dalezios}
\textsc{G.~Dalezios}, \emph{On singular equivalences of Morita type with level and Gorenstein algebras}, Bull.\ Lond.\ Math.\ Soc.\ 53 (2021), no.~4, 1093--1106.

\bibitem{Demonet}
\textsc{L.~Demonet}, {\em Categorification of skew-symmetrizable cluster algebras}, Algebr.\ Represent.\ Theory, 14 (6), 1087--1162, 2011.

\bibitem{Elagin}
    \textsc{A.~Elagin}, \emph{On equivariant triangulated categories}, arXiv:1403.7027. 

\bibitem{FGR}
    \textsc{R.~M. Fossum, P.~A. Griffith and I.~Reiten}, \emph{Trivial extensions of abelian categories}, Lecture Notes in Mathematics, Vol.\ 456, Springer, Berlin-New York, 1975.

\bibitem{IyamaYang}
\textsc{O.~Iyama and D.~Yang}, \emph{Silting reduction and Calabi-Yau reduction of triangulated categories}, Trans.\ Amer.\ Math.\ Soc.\ 370 (2018), no.~11, 7861--7898

\bibitem{kkp} 
     \textsc{M.~Karakikes, A.~Kontogeorgis, C.~Psaroudakis}, \emph{Equivariant Recollements and Singular Equivalences }, arXiv:2504.07620. 

\bibitem{kostas}
    \textsc{P.~Kostas}, \emph{Cleft extensions of rings and singularity categories}, J.\ Algebra 685 (2026), 160--224.

\bibitem{marmaridis}
    \textsc{N.~Marmaridis}, \emph{On Extensions of Abelian Categories with Applications to Ring Theory}, J.\ Algebra 156 (1993), no.~1, 50--64.


\bibitem{Mumford}
\textsc{D.~Mumford}, {\em Abelian Varieties}, With appendices by C.\ P.\ Ramanujam and Yuri Manin. Corrected reprint of the second (1974) edition. Tata Institute of Fundamental Research Studies in Mathematics, 5.\ Published for the Tata Institute of Fundamental Research, Bombay; by Hindustan Book Agency, New Delhi, 2008.\ xii+263 pp.

\bibitem{git}
\textsc{D.~B.~Mumford and J.~Fogarty}, {\it Geometric invariant theory}, second edition, 
Ergebnisse der Mathematik und ihrer Grenzgebiete, 34, Springer, Berlin, 1982.


\bibitem{Orlov}
\textsc{D.~O.~Orlov},
\textit{Triangulated categories of singularities and D-branes in Landau-Ginzburg models}, Tr.\ Mat.\ Inst.\ Steklova 246 (2004), Algebr.\ Geom.\ Metody, Svyazi i Prilozh., 240--262; translation in
Proc.\ Steklov Inst.\ Math.\ 2004, no. 3(246), 227--248.

\bibitem{PR}
\textsc{I. Palm\'er nd J.-E. Roos}, {\em Explicit formulae for the global homological dimensions of trivial extensions of rings}, J.\ Algebra 27 (1973), 380--413.
    
\bibitem{RR}
    \textsc{I.~Reiten, C.~Riedtmann},
    {\em Skew group algebras in the representation theory of artin algebras},  J.\ Algebra 92 (1985), no.\ 1, 224--282.
    
\bibitem{ChaoSun}
\textsc{C.~Sun}, {\em A note on equivariantization of additive categories and triangulated categories}, J.\ Algebra 534 (2019), 483--530.  


   
\end{thebibliography}
\end{document}